\documentclass[11pt, reqno]{amsart}
\usepackage[top=90pt,bottom=90pt,left=90pt,right=90pt]{geometry}
\usepackage[utf8]{inputenc}
\usepackage[T1]{fontenc}
\usepackage[english]{babel}
\usepackage{enumerate}
\usepackage{graphicx}
\usepackage[size=footnotesize]{caption}
\usepackage[size=footnotesize]{subcaption}
\usepackage{fullpage}
\usepackage{bbm,amsmath,amssymb,amsfonts,xcolor,mathtools, verbatim,color,epsf}
\usepackage{bbold}
\usepackage{url}
\usepackage[bookmarksopen=false,breaklinks=true,%
      hyperindex=true,pdfstartview=FitH,colorlinks=true,%
      pdfpagelabels=true,colorlinks=true,linkcolor=black,%
      citecolor=black,urlcolor=black,hypertexnames=false%
      ]
   {hyperref}
\newcommand{\SSigma}{\mathcal{S}\hspace{-0.5mm}\mathit{\Sigma}}
\newcommand{\N}{\mathbb{N}}

\newcommand{\R}{\mathbb{R}}
\renewcommand{\S}{\mathcal{S}}
\newcommand{\C}{\mathbb{C}}
\newcommand{\X}{\underline{X}}

\newcommand{\Q}{\mathbb{Q}}

\renewcommand{\leq}{\leqslant}
\usepackage{faktor}
\usepackage{MnSymbol}

\usepackage{etoolbox}
\makeatletter
\patchcmd{\@setaddresses}{\indent}{\noindent}{}{}
\patchcmd{\@setaddresses}{\indent}{\noindent}{}{}
\patchcmd{\@setaddresses}{\indent}{\noindent}{}{}
\makeatother

\newtheorem{theorem}{Theorem}[section]
\newtheorem{proposition}[theorem]{Proposition}
\newtheorem{corollary}[theorem]{Corollary}
\newtheorem{lemma}[theorem]{Lemma}

\newtheorem{definition}[theorem]{Definition}
\newtheorem{example}[theorem]{Example}
\newtheorem{remark}[theorem]{Remark}

\DeclareMathOperator\tr{tr}

\DeclareFontFamily{U}{mathx}{\hyphenchar\font45}
\DeclareFontShape{U}{mathx}{m}{n}{
      <5> <6> <7> <8> <9> <10>
      <10.95> <12> <14.4> <17.28> <20.74> <24.88>
      mathx10
      }{}
\DeclareSymbolFont{mathx}{U}{mathx}{m}{n}
\DeclareFontSubstitution{U}{mathx}{m}{n}
\DeclareMathAccent{\widecheck}{0}{mathx}{"71}

\title{Any-dimensional Positivstellensätze \\ for symmetric functions}

\author{Sebastian Debus}
\address{Fachbereich Mathematik und Statistik, Universität Konstanz, 78457 Konstanz, Germany}
\email{sebastian.debus@uni-konstanz.de}

\author{Robin Schabert}
\address{Fachbereich Mathematik und Statistik, Universität Konstanz, 78457 Konstanz, Germany}
\email{robin.schabert@uni-konstanz.de}

\date{\scriptsize{\today}}

\begin{document}
\setlength{\parindent}{0pt}

\begin{abstract}
Positivstellensätze provide certificates of positivity for polynomials. Extending these certificates to symmetric functions, uniformly across all dimensions, presents structural challenges. For instance, the underlying domain is not semialgebraic. In this paper, we prove two Positivstellensätze for symmetric functions that are uniformly bounded below by some $\varepsilon > 0$. These are infinite-dimensional analogs of theorems of Pólya and Reznick. The proof relates evaluations of the (truncated) power sum map $(p_2,p_3,\dots)$ to moments of discrete probability measures on the compact interval $[-1,1]$. This yields a characterization of the closure of the orbit space of the infinite symmetric group on the sphere. Finally, we provide an alternative proof of existing Positivstellensätze for normalized symmetric functions.
\end{abstract}

\maketitle

\section{Introduction}
In various areas of mathematics, such as optimization or algebra, one is concerned with the problem of verifying whether a polynomial is non-negative on a specified domain. In real algebraic geometry, there are various classical Positivstellensätze due to Krivine-Stengle \cite{krivine1964anneaux, stengle1974nullstellensatz}, Putinar \cite{putinar1993positive}, Schmüdgen \cite{schmudgen1991}, Pólya \cite{polya} and Reznick \cite{reznick1995uniform}. They have in common that they apply to polynomials which are non-negative or positive on a semialgebraic set. Pólya's and Reznick's results give uniform denominators in Hilbert's 17th problem for positive polynomials. In contrast, non-negative polynomials can have an obstruction to such denominators in the form of bad points \cite{benoist2022bad,delzell1980constructive}.

In this paper, we address the problem of the existence of symmetric any-dimensional global positivity certificates which was recently posed in \cite{levin2025any}. A symmetric function naturally truncates to a family of symmetric problems by setting all but finitely many variables to zero. Thus, non-negative symmetric functions can be viewed as universal inequalities, valid in every dimension. 

Verifying whether a symmetric function is non-negative actually requires the decision whether a polynomial is non-negative on a non-semialgebraic set. Thus, standard techniques from real algebraic geometry do not apply. 
The qualitative relation between homogeneous non-negative symmetric functions and those which are sums of squares was classified in \cite{acevedo2025symmetric} and the equality cases for forms are precisely the same as in Hilbert's general classification for polynomials \cite{hilbert1888darstellung}. 

Beyond symmetric functions, any-dimensional optimization problems also include polynomial inequalities in graph homomorphism densities \cite{lovasz}, normalized symmetric function inequalities \cite{acevedo2025power,blekherman2021symmetric}, and polynomial inequalities in traces of matrices \cite{klep2022optimization,klep2021} and moments of probability measures \cite{klep2026sums}. The normalized symmetric functions coincide with univariate pure trace polynomials \cite{klep2021}, as well as univariate pure moment polynomials \cite{klep2026sums}. While the multivariate extensions of these frameworks are significantly more general \cite{klep2022optimization, klep2026sums}, these works directly imply Stellensätze for normalized symmetric functions.
In \cite{levin2025any} a universal framework for any-dimensional non-negativity problems was introduced in the language of representation stability. 

To the best of our knowledge, we are not aware of existing algebraic Positivstellensätze for symmetric functions. If one restricts the degree of all considered symmetric functions, one must verify non-negativity of a polynomial in a pair $(x,y)$ of tuples of variables, where $x$ lies in a semialgebraic set and $y$ is contained in a discrete set \cite{acevedo2025symmetric}. Analytic Positivstellensätze relying on integral representations with respect to positive measures over universally quantified sets were recently proved in \cite{hu2026}. In our paper, we provide algebraic positivity certificates for symmetric functions.

\subsection*{Main contributions}
We prove two Positivstellensätze for symmetric functions. 
The first Positivstellensatz (Corollary \ref{cor:infinite Polya}) is an analog of Pólya's Positivstellensatz \cite{polya} for symmetric functions. If a homogeneous even symmetric function is strictly bounded below by some $\varepsilon > 0$, we prove that there exists an integer $k$ such that $(\sum_{i=1}^\infty X_i^2)^kf$ is a linear combination of even monomial symmetric functions with non-negative coefficients. 

The second Positivstellensatz (Theorem \ref{thm: dimension-independent reznick}) is a symmetric function analog of Reznick's Positivstellensatz \cite{reznick1995uniform}. Let $p_k = \sum_{i=1}^\infty X_i^k$ denote the $k$-th power sum function. We prove that for every homogeneous symmetric function $f=g(p_2,\ldots,p_{2d})$ whose truncations are uniformly bounded away from zero, there exists an integer $k$ such that $(\sum_{i=1}^\infty X_i^2)^kf$ is a sum of squares. We generalize this result in Theorem \ref{thm: dimension-independent reznick non-homogeneous} to non-homogeneous symmetric functions and also to symmetric functions including $p_1$ in Theorem \ref{thm: dimension-independent reznick with p_1}.

To prove Theorem \ref{thm: dimension-independent reznick} we classify the set of ring homomorphisms $A:=\R[p_2,p_3,\ldots] \to \R$ that are non-negative on the preordering generated by sums of squares and the principal ideal $(1-p_2)$ in $A$. We prove in Corollary \ref{cor:K_T-description} that the ring homomorphisms are essentially the closure of the moments $(m_0,m_1,\ldots)$ of all probability measures on the compact interval $[-1,1]$ of the form $\sum_{i=1}^\infty x_i^2\cdot \delta_{x_i}$ for $x \in \ell^2(\R)$. We also prove that the set of all such moments $(m_0,m_1,\ldots)$ can be seen as the orbit space of the infinite symmetric group of the unit sphere in $\ell^2(\R)$ (compare Proposition \ref{prop: orbit space} and Corollary \ref{cor:K_T-description}).

In Example \ref{ex: bad point reznick} we present a homogeneous symmetric function $f$ which is non-negative but $p_2^k f$ is not a sum of squares for any $k$ for sufficiently many variables. We achieve this by constructing a symmetric function whose truncations have bad points.

Finally, we prove Positivstellensätze for normalized symmetric functions in Section \ref{sec:normalized}. These follow already from the more general work in \cite{klep2022optimization,klep2026sums, klep2021} but our proof is elementary. By reducing the problem to classical Positivstellensätze on semialgebraic sets, the certificates follow directly as a consequence of the structural observations in \cite{acevedo2025power}.

\subsection*{Notation}
We write $\N$ for the set of positive integers and $\N_0$ for the set of non-negative integers. For $x \in \R$ we denote by $\delta_x$ the \emph{Dirac measure} on $x$. For a measure $\mu$ on $\R$ and $x \in \R$ we write $\mu(x)$ instead of $\mu(\{x\})$. For $1 \leq  p < \infty$ we denote by $\ell^p$ the subspace of $\R^\N$ of all sequences $x=(x_n)$ with finite $p$-norm $\Vert x \Vert_p := \left( \sum_n |x_n|^p\right)^{1/p}$. Moreover, we denote by $\R^\infty$ the set of all sequences $x\in \R^{\N}$ where all but finitely many $x_i$ are $0$. We view $\R^n \subset \R^\infty$ under \emph{zero-padding} $x \mapsto (x_1,\ldots,x_n,0,0,\ldots)$ and thus $\R^\infty = \bigcup_{n \in \N} \R^n$. We equip $\R^\N$ with the product topology.

For a commutative ring $A$ we denote by $\sum A^2$ the set of sums of squares in the ring $A$. The ring $\R[[\X]]:=\R[[X_1,X_2,\ldots]]$ denotes the ring of formal power series in countably infinitely many variables $X_1,X_2,\dots$.
A polynomial $f(X_1,\ldots,X_n)$ is \emph{symmetric} if $f(X_1,\ldots,X_n)=f(X_{\sigma(1)},\ldots,X_{\sigma(n)})$ for all permutations $\sigma$ of $[n]$. Analogously, a formal power series is symmetric if it is of bounded degree and invariant under all permutations of the variables. 
We denote the subset of $\R[X_1,\ldots,X_n]$ of symmetric sums of squares by $\SSigma_n$ and the subset of symmetric non-negative polynomials by $\mathcal{SP}_n$. For a positive integer $k$ we write $p_k = \sum_{i=1}^\infty X_i^k$ for the $k$-th \emph{power sum function} and $e_k = \sum_{i_1 < \dots < i_k} X_{i_1}\dots X_{i_k}$ for the $k$-th \emph{elementary symmetric function}.
For a partition $\lambda$ of length $\ell$ we define the \emph{monomial symmetric function} as $m_{\lambda}=\sum X_{i_1}^{\lambda_1}\cdots X_{i_\ell}^{\lambda_\ell}$, where the sum is taken over all distinct monomials that can be obtained by permuting the indices of the variables.
For a symmetric function $f$ we denote by $f^{(n)}$ its truncation to a symmetric polynomial in $n$ variables, e.g., we write $e_k^{(n)}$ and $p_k^{(n)}$ for the truncations of $e_k$ and $p_k$ to $n$ variables.
We say a polynomial $f\in \R[X_1,\dots,X_n]$ is \emph{even} if every monomial of $f$ has even degree in each variable. 

\section{Any-dimensional sos and psd}
In this section, we introduce the objects that we study. We consider symmetric polynomials that are non-negative (positive) in any dimension and relate these to non-negative (positive) symmetric functions. In doing so, we explore truncated moments of certain discrete measures on $\R$.

\begin{definition}\label{def: symmetric any dim sos psd}
    Let $f(X_1,X_2,\ldots)$ be a symmetric function. 
    \begin{enumerate}
        \item Then $f$ is called an \emph{any-dimensional sum of squares} (short \emph{any-dimensional sos}) if $f^{(n)} \in \SSigma_n$
 for all $n\in \N$. We write $\SSigma$ for the set of all any-dimensional sos symmetric functions.
 \item We say that $f$ is \emph{any-dimensional non-negative} (short \emph{any-dimensional psd}) if $f^{(n)} \in \mathcal{SP}_n$ for all $n \in \N$. We denote the set of all any-dimensional psd symmetric functions by $\mathcal{SP}$.
 \item Moreover, if $f$ is homogeneous and $f^{(n)}$ is positive definite for all $n \in \N$, or if $f^{(n)}$ is positive for all $n \in \N$, we call $f$ \emph{any-dimensional positive definite}.
    \end{enumerate}
\end{definition}
In particular, let $f=g(p_1,\ldots,p_{2d})$ be a symmetric function for a polynomial $g \in \R[Z_1,\ldots,Z_{2d}]$. Then $f$ is any-dimensional non-negative (positive definite) if the polynomial $g(Z_1,\ldots,Z_{2d})$ is non-negative (positive) on the set $\bigcup_{n \in \N} (p_1,\ldots,p_{2d})(\R^n \setminus \{0\})$. This set is not semialgebraic by \cite[Corollary~2.25]{acevedo2025wonderful} which makes deciding whether a sequence is contained in $\mathcal{SP}$ potentially highly challenging. It is an open question if deciding containment in $\mathcal{SP}$ is decidable.

Observe that if $f^{(n+1)} \in \SSigma_{n+1}$ then $ f^{(n)} \in \SSigma_{n}$ (and the analogous statement holds with respect to containment in $\mathcal{SP}_{n+1}$ and $\mathcal{SP}_n$) which follows from the identity \[p_d^{(n+1)}(X_1,\ldots,X_n,0)=p_d^{(n)}(X_1,\ldots,X_n).\]
The following example illustrates instances of any-dimensional symmetric sos polynomials and shows that non-negativity does not have to be preserved in an increasing number of variables.
    
\begin{example}
    \begin{enumerate}
        \item Every sum of squares of symmetric functions is an any-dimensional sos.
        \item $p_2^{(n)}$ is not a sum of squares of symmetric polynomials, but $p_2$ is still any-dimensional sos.
        \item Note that $-(p_1^{(2)})^2+2p_2^{(2)}=(x_1-x_2)^2 \in \SSigma_2$. However, already for $n=3$ we have $-(p_1^{(3)})^2+2p_2^{(3)} \not\in \mathcal{SP}_3$. More generally, $f^{(n)}:=-(p_1^{(n)})^2+dp_2^{(n)}
        \in \SSigma_n$ for $n\leq d$ but $f^{(n)} \not\in \mathcal{SP}_n$ for all $n>d$.
    \end{enumerate}
\end{example}

The qualitative relation between homogeneous any-dimensional psd and any-dimensional sos symmetric functions has recently been investigated in \cite{acevedo2025symmetric}. We write $\SSigma_{=2d}$ and $\mathcal{SP}_{=2d}$ for the set of homogeneous elements of degree $2d$ in $\SSigma$ and $\mathcal{SP}$, respectively. In the following theorem we view these sets as subsets of $\R^{\pi(2d)}$ under the identification $\sum_{\lambda \vdash 2d} c_\lambda p_{\lambda_1}\cdots p_{\lambda_\ell} \mapsto (c_\lambda)_{\lambda}$, where $\pi(2d)$ denotes the number of integer partitions of $2d$. 
    \begin{theorem}[\cite{acevedo2025symmetric}~Theorem~3.12]
    For any even degree $2d \geq 4$ the relation $\SSigma_{=2d} \subsetneq \mathcal{SP}_{=2d}$ holds. Moreover, in all these cases the set $\SSigma_{=2d}$ is semialgebraic but $\mathcal{SP}_{=2d}$ is not semialgebraic.
    \end{theorem}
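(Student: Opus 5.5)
The theorem has three parts: the inclusion, its strictness, and the (non-)semialgebraicity of the two cones. I will treat them separately.

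\textbf{Inclusion and strictness.} The inclusion $\SSigma_{=2d} \subseteq \mathcal{SP}_{=2d}$ is immediate, since each truncation of an any-dimensional sos is a sum of squares and hence non-negative. For strictness I will use Hilbert's classification. For $2d=4$ and $n\geq 3$ variables there exist non-negative ternary-type quartics that are not sums of squares. The task is to produce a symmetric function with $f^{(n)}\in\mathcal{SP}_n$ for all $n$ but $f^{(n_0)}\notin\SSigma_{n_0}$ for some $n_0$.

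\begin{itemize}
\item \emph{Quartics.} I would consider $f=\alpha p_4+\beta p_2^2+\gamma p_1p_3+\delta p_1^2p_2+\epsilon p_1^4$. Any-dimensional non-negativity reduces, via the method of degree limitation for symmetric quartics (test points with at most two distinct values), to checking $f$ on points of the form $(a,\dots,a,b,\dots,b)$. This is a two-parameter family in the proportions, so one can choose coefficients that are non-negative there.
\item \emph{Certifying non-sos.} I would use the symmetry-reduced sos decomposition: symmetric sos of degree 4 decompose into isotypic blocks of bounded size. Then I would produce a separating linear functional, namely a point evaluation or moment functional that is non-negative on symmetric squares but negative on $f^{(n)}$.
\item \emph{Higher degrees.} To pass from degree 4 to degree $2d$, I would multiply by $p_2^{d-2}$. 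Non-negativity is preserved. For the non-sos property I would instead pick a different generic construction, e.g.\ a Motzkin- or Robinson-type form symmetrized, and check that it remains non-sos in some fixed number of variables.
\end{itemize}

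\textbf{Semialgebraicity of $\SSigma_{=2d}$.} I would use that symmetric sos of degree $2d$ stabilize: by the half-degree principle / representation stability for Gram matrices in isotypic decomposition, there is an $N$ (depending on $d$) such that $f^{(N)}\in\SSigma_N$ implies $f^{(n)}\in\SSigma_n$ for all $n$. The condition would then be a finite intersection of projections of spectrahedra, hence semialgebraic. One natural route is to write the symmetric sos Gram blocks as polynomials in $n$ and argue that positivity for large $n$ is eventually determined.

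\textbf{Non-semialgebraicity of $\mathcal{SP}_{=2d}$.} This is the main obstacle. The plan is to use that the limit of the images of $(p_1,\dots,p_{2d})$ on the normalized sphere, over all $n$, is not semialgebraic, citing \cite[Corollary~2.25]{acevedo2025wonderful}. Its boundary has infinitely many algebraic pieces, indexed by $n$, accumulating on a curve. Taking the dual cone, I would exhibit a two-dimensional affine slice of $\mathcal{SP}_{=2d}$ whose boundary consists of infinitely many algebraic arcs meeting at infinitely many corner points. A semialgebraic set in the plane has a boundary with only finitely many singular points, so such a slice cannot be semialgebraic, and therefore neither can $\mathcal{SP}_{=2d}$. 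Concretely, I would take a slice such as $p_4 - a p_2^2 + b p_1^2p_2$ and compute its boundary from test points with $k$ equal nonzero coordinates, one for each $k$. I would first verify that these give genuinely distinct supporting pieces for infinitely many $k$.
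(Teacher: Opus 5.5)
The paper only quotes this theorem from \cite{acevedo2025symmetric} and gives no proof, so your plan has to stand on its own. As written, both semialgebraicity steps fail. For $\SSigma_{=2d}$ you rely on a stabilization ``$f^{(N)}\in\SSigma_N\Rightarrow f^{(n)}\in\SSigma_n$ for all $n$''. This is false in every degree. For each $N$, the form $g_N=p_2^{d-1}(Np_2-p_1^2)$ satisfies $g_N^{(N)}=(p_2^{(N)})^{d-1}\sum_{i<j\leq N}(X_i-X_j)^2\in\SSigma_N$, while $g_N^{(N+1)}(1,\dots,1)=-(N+1)^d<0$; this is the degree-two example of Section~2 multiplied by $p_2^{d-1}$. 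So $\SSigma_{=2d}$ is a genuinely infinite, strictly decreasing intersection over $n\in\N$. Such intersections need not be semialgebraic: this is exactly how $\mathcal{SP}_{=2d}$ fails to be. What you need is an $n$-independent, finite-dimensional description of the certificates themselves. One starting point is \cite[Proposition~A.6]{acevedo2025symmetric}: any-dimensional sos are exactly sums of squares of formal power series of degree at most $d$. From there you would show that the symmetric Gram data is cut out by finitely many polynomial conditions. Your remark about Gram blocks depending polynomially on $n$ points in this direction, but it is not carried out.

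For $\mathcal{SP}_{=2d}$, non-semialgebraicity of the image of $(p_1,\dots,p_{2d})$ does not transfer to the cone automatically. The cone only sees the closed convex hull of the values of the products $p_{\lambda_1}\cdots p_{\lambda_\ell}$, $\lambda\vdash 2d$, so everything rests on the explicit slice. The slice you suggest is in fact semialgebraic: $p_4-ap_2^2+bp_1^2p_2\in\mathcal{SP}$ if and only if $a\leq 0\leq b$.
\begin{itemize}
\item The points $\frac{1}{\sqrt{2n}}(1,\dots,1,-1,\dots,-1)$, with $n$ entries of each sign, have $p_1=0$, $p_2=1$, $p_4=\frac{1}{2n}$, which forces $a\leq0$.
\item The points $n^{-1/2}(1,\dots,1)$ force $b\geq 0$.
\end{itemize}
Test points with $k$ equal coordinates are not the binding ones here. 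On $\{p_2=1\}$ the value of $p_1$ is asymptotically free (Corollary~\ref{cor:K_T-description with p_1}), and $p_4$ can be made arbitrarily small.

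A working slice must couple $p_1$ to $p_3$, e.g.\ $p_1^4-4p_1p_3+ap_2^2+bp_4$.
\begin{itemize}
\item Minimizing over the free value $u$ of $p_1$ uses $\min_u(u^4-4um_1)=-3|m_1|^{4/3}$. Non-negativity becomes $a+bm_2\geq 3|m_1|^{4/3}$ on the closure $R$ of $\{(p_3,p_4)(x): p_2(x)=1\}$.
\item $R$ lies in $\{m_2\geq m_1^2\}$ and meets the parabola only at $(0,0)$ and $(\pm k^{-1/2},1/k)$, $k\in\N$. Indeed, zero variance forces a Dirac measure, and then Lemma~\ref{lem: K_T classification}(5) applies.
\item For $b\geq 2$, $\sup_{0\leq w\leq 1}(3w^{2/3}-bw)=4/b^2$ is attained only at $w=8/b^3$. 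Hence the boundary $a=h(b)$ of the slice satisfies $h(b)\leq 4/b^2$, with equality exactly at $b=2k^{1/3}$.
\end{itemize}
This equality set is infinite and discrete, which is impossible for a semialgebraic slice. Multiplying by $p_2^{d-2}$ carries the slice to every $2d\geq 4$.

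Finally, strictness is only sketched. For quartics you still need an explicit form with a certified non-sos truncation; the paper quotes $4p_1^4-5p_2p_1^2-\frac{139}{20}p_3p_1+4p_2^2+4p_4$ from \cite[Theorem~3.6]{acevedo2025symmetric}. For $2d\geq 6$ you rightly drop multiplication by $p_2^{d-2}$. However, ``symmetrize a Motzkin or Robinson form'' comes with no argument for any-dimensional non-negativity or for non-sos. The lowest-degree device of Lemma~\ref{lem: sos_local}, applied to $\sum_i X_i^{2m}f(\widehat{X_i})$ with $m\geq 3$, settles all degrees $\geq 10$. Degrees $6$ and $8$ still need their own examples.
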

    The theorem actually also characterizes the equality cases for even symmetric homogeneous any-dimensional sos and psd symmetric functions, where the respective sets differ from degree $6$ onward. 

The set of any-dimensional sos sequences is a spectrahedron and symmetry reduction methods can be used to describe those any-dimensional sos of a fixed bounded degree. We refer to \cite{blekherman2021symmetric} for background information. 
\begin{example}[Characterization of degree 2 any-dimensional symmetric sos]
A symmetric polynomial $f^{(n)}$ of degree $2$ in $n$ variables is sos if and only if there are $a,b,c,d \in \R$ with $a,c,d \geq 0$ and $ac-b^2 \geq 0$ with
\begin{align*}
 f^{(n)}&=\tr \left( \begin{pmatrix}
    a & b \\ b & c
\end{pmatrix} \cdot \begin{pmatrix}
    1 & X_1+\dots+X_n \\ X_1+\dots+X_n & (X_1+\dots+X_n)^2
\end{pmatrix} \right)+d\left(p_2-\frac{1}{n}p_1^2\right)\\
&=a+2bp_1+(c-\frac{1}{n}d)p_1^2+dp_2 .\end{align*}
Thus, every any-dimensional sos symmetric function of degree $2$ expressed in power sums must be of the form
\[ \alpha+2\beta p_1+ \gamma p_1^2+ \delta p_2\]
for some $\alpha, \gamma, \delta \geq 0$ and $\alpha \gamma -\beta^2\geq 0$. It follows from Hilbert's theorem from 1888 \cite{hilbert1888darstellung}  that also any any-dimensional non-negative symmetric function of degree $2$ can be expressed in that form.
\end{example}
We refer to Examples A.7., A.8. and A.9. in \cite{acevedo2025symmetric} for further characterizations of homogeneous any-dimensional sos of degree 4, and homogeneous even any-dimensional sos of degrees 6 and 8.

Verifying whether a symmetric function is any-dimensional psd requires bounding the infima of symmetric polynomials across all dimensions. First examples of any-dimensional psd sequences can be found in \cite{acevedo2025symmetric,levin2025any}. We present these examples and make our attempt on certifying any-dimensional non-negativity through uniform denominators in a rational sums of squares certificate visible.
    \begin{example}
        \begin{enumerate}
            \item A homogeneous example of an element in $\mathcal{SP} \setminus \SSigma$ is \[f=4p_1^4-5p_2p_1^2-\frac{139}{20}p_3p_1+4p_2^2+4p_4.\] 
            The polynomial $f^{(n)}$ is not sos for $n \geq 4$ (see \cite[Theorem~3.6]{acevedo2025symmetric}). Numerical computations indicate that $p_2^{(n)}f^{(n)}$ is sos for $4 \leq n \leq 12$.
            \item  A non-homogeneous example of an element in $\mathcal{SP} \setminus \SSigma$ is   \begin{align*}   f = &\tfrac{1}{2}p_6 - \tfrac{15}{16}p_4p_2 + \tfrac{1}{16}p_4p_1^2 + \tfrac{15}{32}p_2^3 - \tfrac{3}{32}p_2^2p_1^2 - \tfrac{1}{32}p_2p_1^4 + \tfrac{1}{32}p_1^6 + \tfrac{3}{8}p_4 - \tfrac{9}{16}p_2^2 + \tfrac{3}{8}p_2p_1^2 - \tfrac{3}{16}p_1^4 + 1.
    \end{align*}
    The polynomial $f^{(n)}$ is sos only for $n=1$ (see \cite[Example~1.5]{levin2025any}).
    For $n\in\{2, 3, 4, 5\}$ numerical computations indicate that $(p_2^{(n)})^kf^{(n)}$ is sos if $k$ is at least $5$.
        \end{enumerate}
        In both cases we do not know if the product remains sos in all dimensions.
    \end{example}

\subsection{Connection between any-dimensional psd and non-negative symmetric functions}

While we introduced an any-dimensional psd or sos symmetric function as a sequence of symmetric polynomials in $\mathcal{SP}_n$ and $\SSigma_n$, respectively, we can equivalently think about non-negative symmetric functions in countably infinitely many variables.
\begin{definition}
The graded ring of \emph{symmetric functions} over $\R$ is the polynomial ring generated by all \emph{power sum symmetric functions} $p_k = \sum_{i=1}^\infty X_i^k$ and we denote it by $\Lambda$. The \emph{$k$-th elementary symmetric function} is $e_k = \sum_{1 \leq i_1 < \dots < i_k} X_{i_1}\dots X_{i_k}$. In particular, we have $\Lambda = \R[p_1,p_2,\ldots] = \R[e_1,e_2,\ldots]$.   
\end{definition}
The ring $\Lambda \subset \R[[\X]]=\R[[X_1,X_2,\ldots]]$ is a subring of the ring of formal power series in countably infinitely many variables. Each element in $\Lambda$ has bounded degree and is invariant with respect to permutation of every subset of coordinates. The truncation map $\Lambda \ni f(\X) \mapsto f(X_1,\ldots,X_n,0,\ldots)$ allows us to identify the sequence $(g(p_1^{(n)},\ldots,p_{2d}^{(n)}))_{n \in \N}$ with $g(p_1,\ldots,p_{2d}) \in \Lambda$. Thus, $f(\X) \in \Lambda$ is any-dimensional psd, or sos respectively, if $f(X_1,\ldots,X_n,0,\ldots) \in \mathcal{SP}_n$, or contained in $\SSigma_n$ respectively, for all $n \in \N$.

\begin{remark}
It is important to distinguish between formal power series and symmetric functions which we view as polynomials in all numbers of variables via truncation.
The standard power series expansion $\sqrt{1+t}=\sum_{k=0}^\infty \binom{1/2}{k}t^k$ shows that
any symmetric function with positive coefficients can be written as an infinite sum of squares of formal power series. 
However, under truncation this does not reduce to a valid sos decomposition for a symmetric polynomial as a sum of squares of polynomials. 
\end{remark}

All symmetric functions can naturally be evaluated at points in $\ell^1$. Recall that $\ell^p \subset \ell^q$ for all $1 \leq p \leq q < \infty$. Thus, for integers $1 \leq k_1 < \dots < k_d$ a symmetric function $f=g(p_{k_1},\dots,p_{k_d}) \in \Lambda$ can also be evaluated on the larger space $\ell^{k_1}$.
\begin{definition}
Let $1 \leq k_1 < k_2 < \dots < k_d$ be positive integers and $f=g(p_{k_1},\ldots,p_{k_d}) \in \Lambda$ a symmetric function. Then $f$ is \emph{positive} (respectively \emph{non-negative}) if $f(x) > 0$ (respectively $f(x)\geq 0$) for all $x \in \ell^{k_1}$. We call $f \in \Lambda$ \emph{sos}, if $f = \sum_{i=1}^\infty g_i^2$ for some $g_i \in \R[[\X]]$ and each $g_i$ has degree at most $\frac{\deg (f)}{2}$. 
\end{definition}
Note that if $f \in \Lambda$ is sos, we obtain that $f^{(n)} \in \SSigma_n$ for all $n \in \N$, because the truncation maps $\Lambda \to \R[X_1,\ldots,X_n],  f(\X) \mapsto f(X_1,\ldots,X_n,0,0,\ldots)$ are ring homomorphisms. Moreover, if $f^{(n)}$ is sos for all $n \in \N$, then the symmetric function is sos by \cite[Proposition~A.6]{acevedo2025symmetric}. Thus, $\SSigma$ is the set of all $f \in \Lambda$ that are sos.

We will now prove that the notions of $f \in \Lambda$ being positive (non-negative) and $f$ being any-dimensional positive definite (psd) are equivalent.

\begin{lemma} \label{lem: images on ell ball versus unions of Rn}
For $d \geq 1$ the sets $(p_1,\ldots,p_d)(\ell^1)$ and $(p_1,\ldots,p_d)\left( \R^{\infty}\right)$ are equal.
\end{lemma}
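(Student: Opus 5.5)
Since $\R^\infty\subset\ell^1$, the inclusion $(p_1,\ldots,p_d)(\R^\infty)\subseteq(p_1,\ldots,p_d)(\ell^1)$ is immediate. For the reverse inclusion we fix $x\in\ell^1$ and produce a finitely supported $y\in\R^\infty$ with $p_k(y)=p_k(x)$ for $k=1,\ldots,d$. We split $x$ into a long finite head $(x_1,\ldots,x_N)$ and a small tail. The head reproduces the power sums up to an error, and we then correct this error by appending finitely many extra coordinates. The target values are
\[
s_k:=p_k(x)=\sum_{i\le N}x_i^k+\tau_k(N),\qquad \tau_k(N):=\sum_{i>N}x_i^k .
\]
Since $|\tau_k(N)|\le \|x_{>N}\|_1^k\to0$, the tail contributions become arbitrarily small as $N$ grows.

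The key step is a local surjectivity statement. We choose a fixed point $z\in\R^d$ with pairwise distinct coordinates. The Jacobian of $(p_1,\ldots,p_d)$ at $z$ is $\bigl(k z_j^{k-1}\bigr)_{k,j}$, which is a scaled Vandermonde matrix and hence invertible. By the inverse function theorem, $(p_1,\ldots,p_d)$ maps every neighbourhood $U$ of $z$ onto a neighbourhood $V$ of $p(z):=(p_1(z),\ldots,p_d(z))$.

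We want to append coordinates $w$ near a fixed $z$ that absorb the tail, i.e.
\[
p_k(w)=p_k(z)+\tau_k(N)-c_k,
\]
where $c=(c_k)$ is some fixed offset. The problem is that $z$ itself contributes $p(z)$, so it must be balanced by part of the head. The first idea is to take $z$ and the coordinates $-z$ together; this cancels the odd power sums but doubles the even ones, so it does not work directly.

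We resolve this more cleanly by not separating head and tail in this way. Choose $N$ so large that $x_1,\ldots,x_N$ contain $d$ coordinates $x_{i_1},\ldots,x_{i_d}$ that are nonzero and pairwise distinct. If $x$ has fewer than $d$ distinct nonzero values, we handle that case separately below. Let $z=(x_{i_1},\ldots,x_{i_d})$. We replace these $d$ coordinates by $w$ near $z$ with $p_k(w)=p_k(z)+\tau_k(N)$, and keep the remaining head coordinates unchanged. For $N$ large, the vector $\tau(N)$ lies in $V-p(z)$, so such a $w$ exists. However, we must first fix $z$ before letting $N\to\infty$, so that $V$ does not shrink with $N$. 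This is possible because the indices $i_1,\ldots,i_d$ can be chosen once and then kept fixed while $N$ grows. The resulting $y=(\text{head with } z \text{ replaced by } w)$ lies in $\R^\infty$ and satisfies $p_k(y)=s_k$ for all $k\le d$.

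The main obstacle is the degenerate case in which $x$ takes fewer than $d$ distinct nonzero values. Since $x\in\ell^1$ and its nonzero entries must tend to $0$, infinitely many nonzero entries would force infinitely many distinct values. So in the degenerate case $x$ has only finitely many nonzero entries, and $x$ itself already lies in $\R^\infty$, which finishes the proof.

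One more point needs care: the inverse function theorem requires $z$ to have pairwise distinct coordinates, but they need not be nonzero. The condition "nonzero" is only a convenience used when selecting coordinates from the support of $x$.
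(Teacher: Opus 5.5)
Your proof is correct and is essentially the paper's argument: fix $d$ distinct coordinates in a finite head of $x$, use the scaled Vandermonde Jacobian and the inverse function theorem to get a neighbourhood of fixed size, and absorb the vanishing tail $\tau(N)$ by perturbing those $d$ coordinates, with the finitely supported case handled separately. The paper phrases the last step via closure of $(p_1,\ldots,p_d)(\R^\infty)$ under concatenation, shifting the ball by the image of $(x_{m+1},\ldots,x_N)$, which produces exactly your vector $y$.
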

One can think about the map $(p_1,\ldots,p_d) : \ell^1 \to \R^d$ as computing the moments $(m_1,\ldots,m_d)$ of a discrete (not necessarily finite) measure $\sum_{i=1}^\infty \delta_{x_i}$ on $\R$, where $x=(x_1,x_2,\dots) \in \ell^1$. Thus the lemma can be seen as a variation of the truncated moment problem on the real line and says that if we only consider the moments $(m_1,\ldots,m_d)$, allowing an infinite number of non-zero variables in $\ell^1$ does not create any new combination of moments which we could not already achieve with a finite sequence in $\R^\infty$. Results in this direction are well known. For instance, Tchakaloff's theorem implies that the truncated moments of a measure $\nu$ on $\R^n$ agree with the truncated moments of a finite discrete measure $\mu=\sum_{i=1}^N a_i \delta_{x_i}$ with $a_i > 0$, $\operatorname{supp}(\mu) \subset \operatorname{supp}(\nu)$ with an explicit bound on $N$ (see e.g. \cite[Theorem~5.9]{laurent2008sums}). In our setting, we consider only discrete measures on $\R$ where all weights are $1$.
\begin{proof}[Proof of Lemma \ref{lem: images on ell ball versus unions of Rn}]
Since $\R^\infty \subset \ell^1$ it suffices to prove that for each $x \in \ell^1$ there exists $y \in \R^n$ for some $n$ with the same image under the map $P_d := (p_1,\ldots,p_d) : \ell^1 \to \R^d$. If $x$ contains only finitely many coordinates different from zero this is clear. 
Moreover, the set $P_d\left(\R^\infty \right)$ is closed under addition, since the concatenation of $a \in \R^{n_1}, b \in \R^{n_2}$ as an element in $\R^{n_1+n_2}$ evaluates to $P_d(a)+P_d(b)$.

Suppose that $x \in \ell^1$ has infinitely many non-zero coordinates.
Then $x$ cannot contain infinitely many times the same non-zero entry, as $x \in \ell^1$. Thus, for some sufficiently large $m$ the point $x^{(m)} := (x_1,\ldots,x_m) \in \R^m$ contains at least $d$ distinct coordinates indexed by $i_1< \cdots < i_d$. 
The map \[ \nu_d : \R^d \to \R^d, z \mapsto P_d(x_1,\ldots,x_{i_1-1},z_1,x_{i_1+1},\ldots,x_{i_d-1},z_d,x_{i_d+1},\ldots,x_m)\] 
has the Jacobian matrix
\[ J(z_1,\ldots,z_d) =\begin{pmatrix}
    1 & \dots & 1 \\
    2z_1 & \dots & 2 z_d\\
    3z_1^2 & \dots & 3z_d^2 \\
    \vdots & & \vdots \\
    dz_1^{d-1} & \dots & d z_d^{d-1}
\end{pmatrix}.\]
Clearly $\nu_d(\R^d) \subset P_d(\R^m)$ and since $\det J(x_{i_1},\dots,x_{i_d})$ is equal to the Vandermonde determinant of $(x_{i_1},\ldots,x_{i_d})$ multiplied by the scalar $d!$, the map $\nu_d$ is locally a diffeomorphism at $(x_{i_1},\ldots,x_{i_d})$ by the inverse function theorem. So, there exists $\varepsilon > 0$ with $B_{\varepsilon}(P_d(x^{(m)})) \subset P_d\left( \R^\infty \right)$. Since $ P_d\left(  \R^\infty \right)$ is closed under addition, we can conclude 
\[ B_{\varepsilon}(P_d(x^{(m)})+ \omega ) \subset (p_1,\ldots,p_d)\left( \R^\infty  \right)\]
for all $\omega \in P_d\left( \R^\infty \right)$.
Moreover, for all $N \in \N$ we can decompose $P_d(x)$ as
\[ P_d(x) = P_d(x^{(N)}) + P_d(x_{N+1},x_{N+2},\dots)\] and $\sum_{i=N+1}^\infty x_{i} \to 0$ for $N \to \infty$, since $x \in \ell^1$. 
So for $N \geq m$ sufficiently large $\Vert P_d(x_{N+1},x_{N+2},\dots) \Vert < \varepsilon $
which proves
\[ P_d(x) \in B_{\varepsilon}\left(P_d(x^{(m)})+P_d(x_{m+1},\dots,x_N)\right) \subset P_d(\R^\infty ).\]
\end{proof}

We point out that the images of $\ell^1$ and $\R^\infty$ under the map $P_\infty = (p_1,p_2,\ldots) : \ell^1 \to \R^{\N}$ are different and $P_\infty (\ell^1 \setminus \R^\infty) \cap P_\infty(\R^\infty) = \emptyset$ holds. An analytical-combinatorial proof can exploit the generating function identity 
\[E_{\X}(t):= 1 + \sum_{n = 1}^\infty e_n(\X) t^n = \prod_{i = 1}^\infty (1+X_it)\]
which can be found in \cite[Equation~(2.2)]{Macdonald}. 
For every $x \in \R^\infty$ the function $E_x(t)$ is a polynomial since all but finitely many factors in the infinite product are $1$. For every $z \in \ell^1 \setminus \R^\infty$ the infinite product $E_z(t)=\prod_{i=1}^\infty (1+z_it)$ is not a polynomial in $t$ and therefore distinct from all $E_x(t)$ with $x \in \R^\infty$. This implies that the images under the map $E_\infty = (e_1,e_2,\ldots) : \ell^1 \to \R^{\N}$ are different. From Newton's identities, one can deduce that the images under $P_\infty$ must also be different.

One might hope that the inclusion $(p_1,\ldots,p_d)(\R^n) \subset (p_1,\ldots,p_d)(\ell^1)$ is an equality for all sufficiently large $n$. The following example demonstrates that this is not possible.
\begin{example}
    Let $k \geq 1$ be an integer and $f_k = (p_1-k)^2+(p_2-k)^2 \in \Lambda.$ The infimum of $f_k$ on $\ell^1$ is $0$ and it is attained at $\mathbf{1}=(1,\ldots,1) \in \R^k$.
    We claim that $f_k^{(n)}$ is strictly positive on $\R^n$ for $n < k$. Note that $f_k^{(n)}(x) = 0$ implies $p_1^{(n)}(x)=p_2^{(n)}(x)=k$. By the Cauchy-Schwarz inequality 
    \[k^2 = p_1^{(n)}(x)^2 = \langle x, \mathbf{1} \rangle^2 \leq \Vert x\Vert^2 \cdot \Vert \mathbf{1} \Vert^2 = p_2(x) \cdot n = k \cdot n\]
    holds and we observe that $n \geq k$ must hold for $f_k^{(n)}$ having a root. 
\end{example}

\begin{remark}\label{rem: same images}
Lemma \ref{lem: images on ell ball versus unions of Rn} and its proof can be adapted to equality of the sets $(p_{k_1},\dots,p_{k_d})(\ell^{k_1}) $ and $(p_{k_1},\dots,p_{k_d})(\R^\infty) $ for positive integers $1 \leq k_1 < k_2 < \dots < k_d$. The proof becomes slightly more technical, and we use that the sequence $x \in \ell^{k_1}\setminus \R^\infty$ contains $d$ distinct non-zero coordinates and either all these $d$ scalars must be positive or negative. Instead of the Vandermonde determinant one encounters a generalized Vandermonde determinant as the Jacobian of the adaptation of $\nu_d$. Then the result follows from the definition of a Schur polynomial as the quotient of the generalized Vandermonde determinant by the Vandermonde determinant \cite[Chapter~I.3]{Macdonald} modulo some non-zero integer scalar. In particular, a symmetric function $f=g(p_{k_1},\dots,p_{k_d})$ is non-negative (positive) on $\ell^{k_1}$ if and only if $f$ is non-negative (positive) on $\ell^1$.    
\end{remark}

We immediately obtain the following corollary.

\begin{corollary}
   A symmetric function $f \in \Lambda$ is positive (non-negative) on $\ell^1$ if and only if $f$ is any-dimensional positive definite (any-dimensional psd).
\end{corollary}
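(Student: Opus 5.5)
The corollary follows by combining Lemma \ref{lem: images on ell ball versus unions of Rn} with the identification of $\R^\infty$ as the union of the spaces $\R^n$ under zero-padding. Write $f = g(p_1,\ldots,p_d)$ for some polynomial $g \in \R[Z_1,\ldots,Z_d]$; this is possible since $\Lambda = \R[p_1,p_2,\ldots]$ and $f$ involves only finitely many power sums. For every $x \in \ell^1$ we have $f(x) = g(P_d(x))$, where $P_d = (p_1,\ldots,p_d)$. For $x \in \R^n \subset \R^\infty$ we similarly have $f(x) = f^{(n)}(x_1,\ldots,x_n) = g(P_d(x))$, because $p_k^{(n)}(x_1,\ldots,x_n) = p_k(x_1,\ldots,x_n,0,\ldots)$. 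Consequently
\[ f(\ell^1) = g\bigl(P_d(\ell^1)\bigr) = g\bigl(P_d(\R^\infty)\bigr) = \bigcup_{n\in\N} f^{(n)}(\R^n), \]
where the middle equality is exactly Lemma \ref{lem: images on ell ball versus unions of Rn}.

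\emph{Non-negative case.} By the displayed equality, $f \geq 0$ on $\ell^1$ holds if and only if $f^{(n)} \geq 0$ on $\R^n$ for every $n$. The latter means $f^{(n)} \in \mathcal{SP}_n$ for all $n$, i.e.\ $f \in \mathcal{SP}$.

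\emph{Positive case.} This needs some care with the origin, since Definition \ref{def: symmetric any dim sos psd} distinguishes two meanings of any-dimensional positive definite.
\begin{itemize}
\item If $f$ is not homogeneous of positive degree, any-dimensional positive definite means $f^{(n)} > 0$ on all of $\R^n$ for every $n$. The displayed equality, applied to strict positivity, shows this is equivalent to $f > 0$ on $\ell^1$.
\item If $f$ is homogeneous of positive degree, then $f(0) = 0$, so $f$ is never strictly positive at $0 \in \ell^1$. Here positivity on $\ell^1$ must be read as $f(x) > 0$ for all $x \in \ell^1 \setminus \{0\}$. To handle this, restrict Lemma \ref{lem: images on ell ball versus unions of Rn} to nonzero points: I would check that $P_d(\ell^1 \setminus \{0\}) = P_d(\R^\infty \setminus \{0\})$. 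For $x \in \ell^1$ with infinitely many nonzero entries, the finite $y$ produced in the proof has $P_d(y) = P_d(x)$. If $y$ were $0$, then $p_2(x) = \sum_i x_i^2 = 0$ whenever $d \geq 2$, forcing $x = 0$, a contradiction. The case $d = 1$ is trivial, since then $f = c\,p_1$ and such an $f$ is never positive definite in either sense. With this, the same image argument gives the equivalence with $f^{(n)}$ being positive definite for all $n$.
\end{itemize}

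The only real subtlety is this bookkeeping at the origin in the homogeneous case; everything else is a direct substitution into Lemma \ref{lem: images on ell ball versus unions of Rn}.
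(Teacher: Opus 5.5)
Your proof is correct and is essentially the paper's argument: write $f=g(p_1,\ldots,p_d)$ and apply Lemma \ref{lem: images on ell ball versus unions of Rn}. You add a useful piece of bookkeeping at the origin that the paper glosses over, even though its literal definition of positivity on $\ell^1$ fails at $0$ for homogeneous $f$ such as $p_2$. The only slip is that for $d=1$ a homogeneous $f$ is $c\,p_1^m$ rather than $c\,p_1$; both sides of the equivalence still fail (e.g.\ at $(1,-1,0,\ldots)$), and you can avoid this case entirely by padding $g$ so that $d\geq 2$, as the paper does with $p_1,\ldots,p_{2d}$.
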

\begin{proof}
Any $f \in \Lambda$ can be uniquely expressed as $f=g(p_1,\ldots,p_{2d})$ for some $d \in \N$ and some $2d$-variate real polynomial $g(Z_1,\ldots,Z_{2d})$. 
Thus, $f$ is positive (non-negative) if and only if $g$ is positive (non-negative) on the set $(p_1,\ldots,p_{2d})(\ell^1)= (p_1,\ldots,p_{2d})(\R^\infty)$,
where equality was proven in Lemma \ref{lem: images on ell ball versus unions of Rn}. 
\end{proof}

We recall \emph{Timofte's half-degree principle} concerning non-negativity of symmetric polynomials.
\begin{theorem}\cite{riener2012degree,timofte2003positivity}
Let $f \in \R[X_1,\ldots,X_n]$ be symmetric of degree $d$. For every $x \in \R^n$ there exists a point $y \in \R^n$ with at most $\max\left\{ \lfloor \frac{d}{2} \rfloor , 2\right\}$ distinct coordinates and $f(x)=f(y)$.
\end{theorem}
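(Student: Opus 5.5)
The natural route is to express $f$ through power sums and then shrink the number of distinct coordinates by a Lagrange-multiplier argument on a fibre of the power-sum map. Let $k=\max\{\lfloor d/2\rfloor,2\}$. Since $f$ is symmetric of degree $d$, we can write $f=g(p_1^{(n)},\ldots,p_d^{(n)})$ for a polynomial $g$ in which each $p_j$ carries weight $j$ and the total weighted degree is at most $d$. A monomial of weighted degree at most $d$ can contain at most one factor $p_j$ with $j>k$. Indeed, two such factors would have weight at least $2(\lfloor d/2\rfloor+1)>d$. Hence $f$ is affine-linear in the variables $p_{k+1},\ldots,p_d$ once $p_1,\ldots,p_k$ are held fixed:
\[ f=g_0(p_1,\ldots,p_k)+\sum_{j=k+1}^d c_j\, p_j ,\]
where the $c_j$ are constants. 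The constants arise because a monomial $p_j\cdot m$ with $j>k$ forces $m$ to have weight at most $d-j<k+1$. One should check that $m$ can only be a constant, or else a term lying in $\R[p_1,\ldots,p_k]$ multiplied by $p_j$. In the second case we get a linear function of $p_j$ whose coefficient depends on $p_1,\ldots,p_k$, which is harmless because those are fixed on the fibre. So in every case, on the set
\[ F_x:=\{y\in\R^n : p_i^{(n)}(y)=p_i^{(n)}(x),\ i=1,\ldots,k\}\]
the function $f$ equals a linear combination $L(y)=\sum_{j>k} c_j(x)\,p_j^{(n)}(y)$ plus a constant.

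Next I use compactness of the fibre. The set $F_x$ is compact, since it lies in the sphere $p_2=p_2(x)$; this uses $k\geq 2$, which is exactly where the ``$2$'' in the maximum enters. It is also nonempty, as it contains $x$. I would replace the goal ``$f(y)=f(x)$'' by the following: the minimum and the maximum of $f$ on $F_x$ are both attained at points with at most $k$ distinct coordinates. Since $F_x$ may be disconnected, I would not rely on the intermediate value theorem along $F_x$. Instead I would take $y^-,y^+$ to be the minimizer and maximizer and connect them by a continuous path through points with at most $k$ distinct coordinates on which $f$ is constant.

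A simpler alternative, which I would try first, is the following. Minimize $f$ over $F_x$ and show the minimizer $y^*$ has at most $k$ distinct coordinates. Then apply the same result to $-f$ to get a maximizer with the same property. Finally argue on the set of points having at most $k$ distinct coordinates, varying the coordinate values continuously. That set is not obviously connected within $F_x$, so this last step needs care.

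The core lemma is the statement about extremizers. Suppose $y^*$ extremizes $L$ on $F_x$ and has $m>k$ distinct coordinate values. Suppose moreover that the gradients $\nabla p_1,\ldots,\nabla p_k$ are linearly independent at $y^*$, which holds because $y^*$ has at least $k$ distinct values (Vandermonde, exactly as in the proof of Lemma \ref{lem: images on ell ball versus unions of Rn}). Lagrange multipliers then give, for every coordinate $t=y^*_i$,
\[ \sum_{j>k} j c_j t^{j-1}=\sum_{i=1}^k \lambda_i\, i\, t^{i-1}.\]
This is a univariate polynomial equation of degree at most $d-1$. It could have up to $d-1$ roots, which exceeds $k$. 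This is the \textbf{main obstacle}: a first-order count alone is too weak, and one needs the second-order condition. I would try to resolve it by restricting to perturbations supported on $k+1$ distinct values, that is, a curve inside the fibre that moves $k+1$ value classes. Along such a curve the second-order condition at an extremum forces a sign constraint on $q''$ at the values, where $q(t)=\sum_{j>k}c_jt^j-\sum_i\lambda_i t^i$. Using that $q$ has degree at most $d\le 2k+1$, I would derive a contradiction via Descartes/Rolle counting.

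If this gets too technical, I would instead cite Timofte's original argument via the representation in elementary symmetric functions. There, $f$ is linear in $e_{k+1},\ldots,e_n$ on the fibre, and one uses that the fibre of $(e_1,\ldots,e_k)$ corresponds to the set of polynomials $\prod(T-y_i)$ with only real roots and the first $k$ coefficients fixed. Extremal points of a linear functional on this set have at most $k$ distinct roots, by a root-perturbation argument. Connectedness of that set of real-rooted polynomials then finishes the intermediate value step.
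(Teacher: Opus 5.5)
The paper does not prove this statement; it only quotes it from Riener and Timofte. Your sketch therefore has to stand on its own, and its core step does not. \textbf{The lemma you set out to prove is false.} You want to show that an extremizer $y^*$ of $L$ on $F_x$ with more than $k$ distinct coordinates leads to a contradiction. Take $n=3$ and $f=p_4$, so $d=4$ and $k=2$. In three variables Newton's identities give $p_4=e_1p_3-e_2p_2+e_3p_1$. On $F_x=\{p_1=0,\ p_2=1\}$ this gives $p_4=-e_2p_2=\tfrac12 p_2^2=\tfrac12$ identically. So every point of $F_x$, e.g.\ $x=(1/\sqrt2,0,-1/\sqrt2)$, is both a minimizer and a maximizer with three distinct coordinates, and the gradients of $p_1,p_2$ are independent there. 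No local analysis at an arbitrary extremizer can succeed; you must select a particular one. Moreover, the second-order mechanism you hoped for gives no information. Consider the direction moving one coordinate of each of $k+1$ distinct values $t_s$, $s\in S$. Its tangent vector is $v_s=1/\omega_S'(t_s)$ with $\omega_S(t)=\prod_{s\in S}(t-t_s)$. Write $q'=\omega r$ with $\omega(t)=\prod_{s=1}^m(t-t_s)$, so $\deg r\le 2k-m$. Then $\sum_{s\in S}q''(t_s)v_s^2$ is the $k$-th divided difference of $r\,\omega_{S^c}$ at the nodes $t_S$. Since $\deg(r\,\omega_{S^c})\le k-1$, it vanishes. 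The bound $d\le 2k+1$ thus kills the second variation instead of producing a sign pattern for a Rolle/Descartes count.

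What is missing is the right use of $d\le 2k+1$: it makes $f$ affine along such curves. Let $z=(z_0,\dots,z_k)$ be $k+1$ coordinates with pairwise distinct values. Move them, with all other coordinates fixed, along the curve on which $p_1(z),\dots,p_k(z)$ stay constant; it exists near $z$ by your Vandermonde argument. Along it, $\dot z_s=\kappa/\omega_z'(z_s)$ with $\omega_z(t)=\prod_s(t-z_s)$ and $\kappa\neq0$. Then $\tfrac{d}{ds}p_j=j\kappa\sum_s z_s^{j-1}/\omega_z'(z_s)=j\kappa\,h_{j-1-k}(z)$, where $h_i$ is the complete homogeneous symmetric polynomial. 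For $j\le d\le 2k+1$ we have $j-1-k\le k$, so $h_{j-1-k}(z)\in\Q[p_1(z),\dots,p_k(z)]$ is constant on the curve. Hence $(p_{k+1},\dots,p_d)$ moves along a straight line and $f$ is affine in $\int\kappa$. Meanwhile $p_{k+1}$ is strictly monotone, because $h_0=1$. Now, among the nonempty compact set of minimizers of $f$ on $F_x$, choose one maximizing $p_{k+1}$. If it had $k+1$ distinct values, moving along the curve would keep $f$ constant (an affine function minimized at an interior point) while increasing $p_{k+1}$, a contradiction. So it lies in the set $A_k$ of points with at most $k$ distinct coordinates, and likewise for maximizers. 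The final step then needs neither a path ``on which $f$ is constant'' nor connectivity inside $F_x$, where $F_x\cap A_k$ is finite. Instead, $A_k$ is star-shaped about $0$, so $f(A_k)$ is an interval containing both $\min_{F_x}f$ and $\max_{F_x}f$, hence containing $f(x)$.
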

We can deduce that any point in $(p_1,\ldots,p_d)(\ell^1)$ is attained by an evaluation at a point in $\R^{\infty}$ with at most $\max\{\lfloor \frac{d}{2} \rfloor,2\}$ distinct coordinates. In related work \cite[Theorem~2]{krasikov2026generalized} the authors recently formulated that the truncation of the symmetric function $f=\sum_{i=0}^{2d} a_i e_ie_{2d-i}$ is non-negative for all numbers of variables if and only if $f$ is non-negative at all points in $\R^\infty$ with at most $d$ distinct coordinates.

The following alternative conclusion of Lemma \ref{lem: images on ell ball versus unions of Rn} demonstrates a stabilization in any-dimensional optimization problems in the ring $\Lambda$ if the infimum is attained.

\begin{corollary}
    Let $f \in \Lambda$ with $\alpha=\inf_{x \in \ell^1}f(x)$ being attained at some $x \in \ell^1$. Then there exists $N \in \N$ with $\alpha = \inf_{x \in \R^N}f^{(N)}(x)$. In particular, the sequence of polynomial optimization problems $\left( \inf_{x \in \R^n} f^{(n)} \right)$ stabilizes.     
\end{corollary}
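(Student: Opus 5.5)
Write $f=g(p_1,\ldots,p_{d})$ for a suitable $d$ and a polynomial $g\in\R[Z_1,\ldots,Z_d]$. The plan is to move the minimizer from $\ell^1$ into some finite-dimensional $\R^N$ using Lemma~\ref{lem: images on ell ball versus unions of Rn}, and then compare the infima over $\R^N$ and over $\ell^1$ using monotonicity in $n$.

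\textbf{Step 1 (transfer the minimizer).} Let $x\in\ell^1$ with $f(x)=\alpha$. Since $f$ depends on $x$ only through $P_d(x)=(p_1,\ldots,p_d)(x)$, Lemma~\ref{lem: images on ell ball versus unions of Rn} gives a point $y\in\R^\infty$ with $P_d(y)=P_d(x)$. As $\R^\infty=\bigcup_n\R^n$, we have $y\in\R^N$ for some $N\in\N$. Then
\[ f^{(N)}(y)=g(P_d(y))=g(P_d(x))=f(x)=\alpha. \]

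\textbf{Step 2 (sandwich the infima).} For every $n$, zero-padding embeds $\R^n\subset\R^{n+1}\subset\ell^1$, and $f^{(n+1)}$ restricted to $\R^n$ equals $f^{(n)}$, since the truncated power sums are compatible with setting the last variable to zero. Hence the sequence $\left(\inf_{\R^n}f^{(n)}\right)_n$ is non-increasing and bounded below by $\inf_{\ell^1}f=\alpha$. Step 1 gives $\inf_{\R^N}f^{(N)}\le\alpha$, so this infimum equals $\alpha$. For every $n\ge N$ we get
\[ \alpha\le\inf_{\R^n}f^{(n)}\le\inf_{\R^N}f^{(N)}=\alpha. \]
So the sequence is constant equal to $\alpha$ from $N$ on, which is exactly the claimed stabilization.

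\textbf{Main obstacle.} The only substantive input is the equality of images in Lemma~\ref{lem: images on ell ball versus unions of Rn}; the lemma is stated for $(p_1,\ldots,p_d)$, so it applies directly here. The one point to check with care is that the lower bound $\inf_{\R^n} f^{(n)}\ge\alpha$ really does use $\R^n\subset\ell^1$ under zero-padding, and that $f$ and $f^{(n)}$ agree on padded points. Both are immediate from the definitions.
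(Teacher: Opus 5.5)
Your proposal is correct and is the argument the paper has in mind: the paper states the corollary without a separate proof, as a direct consequence of Lemma~\ref{lem: images on ell ball versus unions of Rn}. That lemma moves the minimizer into some $\R^N$, and together with the monotonicity $\inf_{\R^{n+1}} f^{(n+1)} \leq \inf_{\R^n} f^{(n)}$ and the lower bound $\alpha$ coming from $\R^n \subset \ell^1$ this gives stabilization; you have filled in these implicit steps correctly.
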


If the infimum of $f \in \Lambda$ is not attained on $\ell^1$, then the sequence of polynomial optimization problems $\left( \inf_{x \in \R^n} f^{(n)} \right)$ does not stabilize.

\begin{example}
Consider $f=p_4+(p_2-1)^2 \in \Lambda$ which is positive on $\ell^1$. Evaluating $f^{(n)}$ at the points $x^{(n)}=\left( \frac{1}{\sqrt{n}},\dots,\frac{1}{\sqrt{n}} \right)$ gives $f^{(n)}(x^{(n)}) = \frac{1}{n}$ which converges to $0$. However, the polynomials $f^{(n)}$ are strictly positive.
\end{example}

\section{Positivstellensätze for any-dimensional positive symmetric polynomials}

In this section, we extend the two Positivstellensätze of Pólya \cite{polya} and Reznick \cite{reznick1995uniform} to the case of any-dimensional positive symmetric functions, proving that one can choose a uniform denominator for all numbers of variables to verify strict positivity of a symmetric function. 

\subsection{Any-dimensional Pólya's Positivstellensatz}

The assumption in Pólya's theorem is usually formulated as positivity of a homogeneous polynomial on all non-zero points of a simplex. By substituting $X_i^2$ for each $X_i$, one equivalently arrives at the formulation below.

\begin{theorem}[\cite{polya}~P{\'o}lya~1928]
Let $f\in \R[X_1,\dots,X_n]$ be homogeneous, even and positive definite. Then there is $k\in \N_0$ such that
\[\left(\sum_{i=1}^n X_i^2\right)^kf\]
has non-negative coefficients and is therefore a sum of squares of monomials.
\end{theorem}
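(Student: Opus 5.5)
The plan is to reduce to the classical simplex version of Pólya's theorem and then prove that version by the standard Bernstein-type argument. Since $f$ is even, write $f(X_1,\dots,X_n)=h(X_1^2,\dots,X_n^2)$ for a homogeneous $h\in\R[Y_1,\dots,Y_n]$ of degree $m=\deg(f)/2$. Positive definiteness of $f$ is equivalent to $h>0$ on the standard simplex $\Delta=\{y\in\R^n_{\geq 0}: \sum_i y_i=1\}$, because every non-zero point of the non-negative orthant is a positive multiple of a point of $\Delta$. It then suffices to find $k$ such that $(Y_1+\dots+Y_n)^k h$ has non-negative coefficients. Substituting $Y_i=X_i^2$ turns every monomial into an even monomial $X^{2\alpha}=(X^\alpha)^2$, so $(\sum X_i^2)^k f$ is a non-negative combination of squares of monomials.

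For the simplex statement, write $h=\sum_{|\beta|=m} c_\beta\, \tfrac{m!}{\beta!} Y^\beta$ and expand
\[(Y_1+\dots+Y_n)^k h=\sum_{|\alpha|=m+k}\frac{(m+k)!}{\alpha!}\,Y^\alpha\, \Phi_k(\alpha),\]
where
\[\Phi_k(\alpha)=\sum_{\beta\leq\alpha} c_\beta \frac{\alpha!\, m!\, k!}{(m+k)!\,\beta!\,(\alpha-\beta)!}.\]
The key observation is that $\Phi_k(\alpha)$ can be compared with an evaluation of $h$. Setting $N=m+k$ and $t=1/N$, we have
\[\frac{m!\,k!\,\alpha!}{N!\,\beta!\,(\alpha-\beta)!}=\frac{\prod_i \alpha_i(\alpha_i-1)\cdots(\alpha_i-\beta_i+1)}{N(N-1)\cdots(N-m+1)}.\]
This is the monomial $(\alpha/N)^\beta$ with each factor $x^j$ replaced by the falling power $x(x-t)\cdots(x-(j-1)t)$, normalized by $1(1-t)\cdots(1-(m-1)t)$. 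Hence
\[\Phi_k(\alpha)=\frac{h_t(\alpha/N)}{\prod_{j<m}(1-jt)},\]
where $h_t$ is a polynomial in $(y,t)$ with $h_0=h$. Its coefficients depend continuously on $t$, so
\[|h_t(y)-h(y)|\leq C\,t \quad\text{for } y\in\Delta,\]
with $C$ depending only on $h$, for instance $C\leq \tfrac{m(m-1)}{2}\sum_\beta |c_\beta|\tfrac{m!}{\beta!}$.

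Since $\Delta$ is compact, $\lambda:=\min_\Delta h>0$. Choose $k$ with $C/(m+k)<\lambda$. Then for every $\alpha$ with $|\alpha|=N$ the point $\alpha/N$ lies in $\Delta$, so $h_t(\alpha/N)>0$. The denominator $\prod_{j<m}(1-jt)$ is also positive because $N\geq m$. Therefore all coefficients are positive, and the reduction above finishes the proof.

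The main obstacle is the bookkeeping in the coefficient identity, in particular verifying that $\Phi_k(\alpha)$ is exactly this ``falling-power'' perturbation of $h$ at $\alpha/N$ and obtaining the uniform $O(1/N)$ error bound. Once this is established, compactness of $\Delta$ and positivity of $h$ give the result directly.
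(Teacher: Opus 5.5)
Your proof is correct. It is the classical Hardy--Littlewood--P\'olya argument, and the same falling-factorial mechanism underlies the Powers--Reznick bound the paper quotes as Theorem~\ref{thm: powers and reznick}. The paper itself gives no proof of this statement; it only cites P\'olya.

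There is one bookkeeping slip. Your displayed identity is missing a factor $\frac{m!}{\beta!}$ on the right: for $m=2$, $k=0$, $\alpha=\beta=(1,1)$ the left side is $1$ and your right side is $\tfrac12$. With that factor restored, set $h_t(y)=\sum_\beta c_\beta\frac{m!}{\beta!}\prod_i y_i(y_i-t)\cdots(y_i-(\beta_i-1)t)$. This does satisfy $h_0=h$ and $\Phi_k(\alpha)=h_t(\alpha/N)/\prod_{j<m}(1-jt)$. Extending the sum from $\beta\leq\alpha$ to all $\beta$ is harmless, because the falling product contains the factor $0$ when some $\beta_i>\alpha_i$.

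Two smaller precision points. The estimate $|h_t-h|\leq Ct$ on $\Delta$ comes from $h_t-h$ being a polynomial in $(y,t)$ divisible by $t$; mere continuity would not give it. Your explicit $C$ is valid when $(m-1)t\leq 1$, which holds for $t=1/N$ with $N\geq m$.

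What your constant costs is worth noticing. $C=\binom{m}{2}\sum_\beta|c_\beta|\frac{m!}{\beta!}$ depends on $n$; for $h=(Y_1+\dots+Y_n)^m$ it equals $\binom{m}{2}n^m$. That is irrelevant for P\'olya's theorem as stated. However, it would not yield Corollary~\ref{cor:infinite Polya}, whose proof needs a threshold independent of the number of variables.

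Powers and Reznick get such a threshold from your own identity by comparing $h$ and $h_t$ only at the lattice points $y=\alpha/N$, $t=1/N$:
\begin{itemize}
\item At these points $0\leq\prod_i y_i(y_i-t)\cdots(y_i-(\beta_i-1)t)\leq y^\beta$.
\item The multinomial identity for falling factorials gives $\sum_{|\beta|=m}\frac{m!}{\beta!}\prod_i y_i(y_i-t)\cdots(y_i-(\beta_i-1)t)=\prod_{j<m}(1-jt)$ whenever $y_1+\dots+y_n=1$.
\item Together these give $|h(\alpha/N)-h_t(\alpha/N)|\leq L\bigl(1-\prod_{j<m}(1-j/N)\bigr)\leq\binom{m}{2}L/N$, where $L=\max_\beta|c_\beta|$.
\end{itemize}
The result is the dimension-free threshold $k>\binom{m}{2}\frac{L}{\lambda}-m$, which is the shape of the bound in Theorem~\ref{thm: powers and reznick}. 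So your uniform estimate on all of $\Delta$ suffices for the finite-dimensional statement. The lattice-point estimate is what makes the any-dimensional corollary possible.
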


Powers and Reznick proved a bound such that for any integer $k$ larger than the bound the polynomial $p_2^k \cdot f$ has non-negative coefficients. Their bound depends on the degree of $f$, the minimum of $f$ on the sphere, and the maximum absolute value among all coefficients of $f$. The bound does not have to be strict.

\begin{theorem}[\cite{powers2001new}~Theorem~1]\label{thm: powers and reznick}
Let $f = \sum_{\alpha \in \N_0^{n}} c_\alpha \prod_{i=1}^nX_i^{\alpha_i} \in \R[X_1,\ldots,X_n]$ be homogeneous, positive definite, even of degree $2d$, and let $L:= \max \left\{ |c_\alpha| ~\middle|~ \alpha \in \N_0^n, |\alpha|=2d\right\}, \lambda := \min \left\{ f(x) ~\middle|~ x \in \mathbb{S}^{n-1} \right\}$. Then for any $k \in \N$ with \[ k > \frac{d(d-1)}{2} \frac{L}{\lambda}-d\]
the polynomial $(\sum_{i=1}^n X_i^2)^kf$ has positive coefficients.
\end{theorem}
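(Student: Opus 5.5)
The plan is to reduce to the classical (non-symmetric) form of Pólya's theorem by substituting squares, and then to track the coefficient-extraction argument of Powers and Reznick. Write $f(X)=F(X_1^2,\ldots,X_n^2)$, where $F$ is a form of degree $d$. Because $f$ is even, homogeneous and positive definite, $F>0$ on the standard simplex $\Delta_{n-1}=\{y\ge 0,\ \sum y_i=1\}$. Moreover, $\lambda=\min_{\mathbb S^{n-1}}f=\min_{\Delta}F$, since $x\mapsto (x_1^2,\ldots,x_n^2)$ maps the sphere onto the simplex. The coefficients of $F$ and $f$ also agree under $\alpha\mapsto 2\alpha$. So it suffices to show that $(\sum y_i)^kF$ has positive coefficients whenever $k>\frac{d(d-1)}{2}\frac{L}{\lambda}-d$. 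Substituting $y_i=X_i^2$ back then gives the claim for $f$.

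For $F=\sum_{|\beta|=d}a_\beta y^\beta$, I would use Pólya's falling-factorial device. For $t>0$ set $y^\beta_t=\prod_i y_i(y_i-t)\cdots(y_i-(\beta_i-1)t)$ and $F_t(y)=\sum a_\beta y^\beta_t$. Expanding by the multinomial theorem gives, for $N=k+d$ and $|\gamma|=N$,
\[
\text{coeff}_{\gamma}\Bigl(\bigl(\textstyle\sum y_i\bigr)^kF\Bigr)=\frac{k!\,N^d}{\gamma!}\,F_{1/N}\!\left(\frac{\gamma}{N}\right).
\]
This holds because $\binom{k}{\gamma-\beta}=\frac{k!}{\gamma!}\prod_i\gamma_i(\gamma_i-1)\cdots(\gamma_i-\beta_i+1)$, and the falling factorial equals $N^{|\beta|}(\gamma/N)^\beta_{1/N}$. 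Positivity of every coefficient is therefore equivalent to $F_{1/N}(u)>0$ at all lattice points $u=\gamma/N$ of the simplex.

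Next I would bound $|F(u)-F_t(u)|$ on $\Delta$. The plan is to write $y^\beta-y^\beta_t$ as a polynomial in $t$, namely a sum over $j\ge1$ of $t^j$ times elementary symmetric expressions in the shifts $\{0,1,\ldots,\beta_i-1\}$, multiplied by monomials of degree $d-j$ in $y$. Summed against $|a_\beta|\le L$ and evaluated at $u\in\Delta$, the monomial terms are controlled by $(\sum u_i)^{d-j}=1$. The key estimate should be
\[
|F(u)-F_t(u)|\le L\bigl((1+t)(1+2t)\cdots(1+(d-1)t)-1\bigr)\quad\text{or more precisely}\quad F_t(u)\ge \lambda-L\Bigl(1-\prod_{j=1}^{d-1}(1-jt)\Bigr)\cdot(\ldots).
\]
Powers and Reznick obtain the sharp form $F_t(u)\ge \lambda\prod_{j<d}(1-jt)-\ldots$. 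I would try to prove the linearized bound $F_t(u)\ge \lambda-\binom d2 tL$, with $t=1/N$, by induction on $d$. The induction writes $y^\beta_t$ via the identity $y^{\beta+e_i}_t=(y_i-\beta_it)y^\beta_t$, and the accumulated shifts over a degree-$d$ monomial total at most $\binom d2$. With this bound, the condition $N>\binom d2 L/\lambda$, that is $k>\frac{d(d-1)}2\frac L\lambda-d$, gives $F_{1/N}(u)>0$.

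I expect the main obstacle to be this error estimate. The difficulty is getting the constant $\binom d2 L$ uniformly over the simplex, with only the maximum coefficient $L$ in it and no dependence on $n$ or on the number of monomials. Naively summing $|a_\beta|$ over all $\beta$ introduces a factor counting monomials. To avoid this, I would keep the sums in the form $\sum_\beta |a_\beta|\,\text{(shift polynomial)}\,u^{\beta'}$ and bound it by $L$ times the corresponding expansion of $(\sum u_i)^{d-1}$ times shift weights. This is exactly how the multinomial coefficients absorb the count, and I would follow the Powers–Reznick computation at this point.
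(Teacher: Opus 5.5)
The paper does not prove this statement; it quotes it from Powers--Reznick. Your set-up follows their route and is correct: the substitution $f=F(X_1^2,\dots,X_n^2)$ with $\lambda=\min_\Delta F$, the coefficient formula $\frac{k!\,N^d}{\gamma!}F_{1/N}(\gamma/N)$, and the reduction to positivity of $F_{1/N}$ at the lattice points $\gamma/N$.

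There is, however, a gap at the one step that carries the quantitative content, and you defer it. You also frame the missing estimate as holding uniformly on the simplex. On all of $\Delta$, the natural absolute-value bound is only $|F-F_t|\le L\bigl(\prod_{j=1}^{d-1}(1+jt)-1\bigr)$. For $d\ge 3$ we have $\prod_{j=1}^{d-1}(1+j/N)-1>\binom d2/N$, so this bound does not give the threshold $k>\binom d2 L/\lambda-d$.

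The missing observation is that only lattice points matter. At $u=\gamma/N$ with $t=1/N$,
\[
u^\beta_t=N^{-d}\prod_i\gamma_i(\gamma_i-1)\cdots(\gamma_i-\beta_i+1),
\]
a product of falling factorials of non-negative integers. Hence $0\le u^\beta_t\le u^\beta$, and this sign control is what lets you drop the absolute values.

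With it, the estimate is one line and no induction is needed. Write $F=\sum_\beta c_\beta y^\beta$ with $|c_\beta|\le L\le L\binom d\beta$. Then
\[
F(u)-F_t(u)=\sum_{|\beta|=d} c_\beta\,(u^\beta-u^\beta_t)\le L\sum_{|\beta|=d}\binom d\beta\,(u^\beta-u^\beta_t)=L\Bigl(1-\prod_{j=1}^{d-1}(1-jt)\Bigr)\le \binom d2\, tL.
\]
This uses three facts:
\begin{enumerate}
\item $\sum_\beta\binom d\beta u^\beta=1$ on $\Delta$;
\item the falling-factorial multinomial identity $\sum_{|\beta|=d}\binom d\beta u^\beta_t=\prod_{j=0}^{d-1}\bigl(\sum_i u_i-jt\bigr)$, which at lattice points just counts injective $d$-tuples from an $N$-set split into blocks of sizes $\gamma_i$;
\item $\prod_j(1-a_j)\ge 1-\sum_j a_j$ for $a_j=j/N\in[0,1]$.
\end{enumerate}
Thus $F_{1/N}(\gamma/N)\ge\lambda-\binom d2 L/N>0$ as soon as $N=k+d>\binom d2 L/\lambda$.

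The inequality $|c_\beta|\le L\binom d\beta$ is exactly where the multinomial coefficients absorb the monomial count, as you anticipated. It also explains why the paper's plain-coefficient $L$ is harmless. Powers--Reznick use $L_{\mathrm{PR}}=\max_\beta |c_\beta|/\binom d\beta\le L$, so the bound stated in the paper is a weakening of theirs.
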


Because the bound by Powers and Reznick does not depend on the number of variables, we can directly deduce that every even homogeneous symmetric function $f \in \Lambda$, which is strictly greater than some $\varepsilon > 0$ on $\{x \in \R^\infty ~\mid~ p_2(x)=1\}$, has a dimension-independent certificate of its non-negativity. More specifically, $p_2^k f = \sum_{\lambda} c_\lambda m_\lambda$ for some $k \in \N, c_\lambda \geq 0$ and $\lambda$ ranges over all partitions of $2k+\deg (f)$ with only even parts. In particular, this truncates to an any-dimensional certificate of the positivity of the sequence of symmetric polynomials $(f^{(n)})$.

\begin{corollary}\label{cor:infinite Polya}
Let $f(\X)$ be a homogeneous even symmetric function. If 
\[f(x) \geq \varepsilon \text{ for all } x\in \R^\infty \text{ with } p_2(x)=1\]
for some fixed $\varepsilon > 0$, then there is $k \in \N$ such that $p_2^k f$ is a linear combination of even monomial symmetric functions with only non-negative scalars. In particular, $p_2^kf \in \SSigma$.
\end{corollary}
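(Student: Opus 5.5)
The plan is to apply Theorem~\ref{thm: powers and reznick} to every truncation $f^{(n)}$ and to check that the resulting bound on $k$ does not depend on $n$.

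Write $\deg f = 2d$ and expand $f$ in even monomial symmetric functions, $f = \sum_{\mu} a_\mu m_\mu$. Here $\mu$ runs over the finitely many partitions of $2d$ with only even parts. Each monomial of $m_\mu^{(n)}$ has coefficient $1$, and $m_\mu^{(n)}$ and $m_{\mu'}^{(n)}$ share no monomials when $\mu \neq \mu'$. Hence every coefficient of $f^{(n)}$ is either $0$ or one of the $a_\mu$, and
\[L_n \leq L := \max_\mu |a_\mu|\]
holds for all $n$, where $L_n$ denotes the maximal absolute coefficient of $f^{(n)}$. Since $f$ is nonzero of degree $2d$, we also have $L>0$.

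Next we bound the minimum of $f^{(n)}$ on the sphere. Under zero-padding, $\mathbb{S}^{n-1} \subset \{x \in \R^\infty : p_2(x)=1\}$, so the hypothesis gives
\[\lambda_n := \min_{\mathbb{S}^{n-1}} f^{(n)} \geq \varepsilon > 0.\]
By homogeneity, $f^{(n)}$ is positive definite. It is also even and homogeneous of degree $2d$ whenever $f^{(n)}\neq 0$, and the bound $\lambda_n \geq \varepsilon$ rules out $f^{(n)}=0$. Therefore Theorem~\ref{thm: powers and reznick} applies to $f^{(n)}$ for every $n$.

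Now fix any integer
\[k > \frac{d(d-1)}{2}\,\frac{L}{\varepsilon} - d, \quad k \geq 1.\]
Since $L_n/\lambda_n \leq L/\varepsilon$, this $k$ satisfies the Powers–Reznick bound for each $n$. So $(p_2^{(n)})^k f^{(n)}$ has non-negative coefficients for all $n$.

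The final step lifts this to the symmetric function level. Write $p_2^k f = \sum_{\nu} c_\nu m_\nu$, where $\nu$ ranges over the partitions of $2k+2d$; only even parts occur, because $p_2$ and $f$ are even. Truncation to $n \geq 2k+2d$ variables is injective on the $m_\nu$ of this degree. Each $c_\nu$ is the coefficient of the monomial $X_1^{\nu_1}\cdots X_\ell^{\nu_\ell}$ in $(p_2^{(n)})^k f^{(n)}$, and so $c_\nu \geq 0$.

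Finally, each even monomial is a square of a monomial, so each $m_\nu$ with even parts truncates to a sum of squares. This gives $p_2^k f \in \SSigma$.

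The only delicate point is the uniformity in $n$. This is exactly the observation that the coefficient bound $L$ comes from the finitely many $a_\mu$, and that the minimum on spheres is bounded below by the hypothesis $\varepsilon$. No real obstacle is expected beyond this bookkeeping.
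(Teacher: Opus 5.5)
Your proof is correct and takes essentially the same route as the paper. You apply the Powers--Reznick bound to the truncations, using that the coefficient bound $L$ (coming from the finitely many $m_\mu$) and the lower bound $\varepsilon$ on the sphere do not depend on $n$, and then read off the non-negative coefficients $c_\nu$ of $p_2^k f$ from a truncation with at least $2k+2d$ variables; your inequality $L_n \leq L$ is in fact slightly more careful than the paper's claim that $L$ is identical for all truncations.
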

\begin{proof}
Since $f \in \Lambda$ is even of degree $2d$, there exists $g \in \R[Z_1,\ldots,Z_d]$ with $f=g(p_2,p_4,\ldots,p_{2d})$. 
Observe that the parameters $(d,L)$ from Theorem \ref{thm: powers and reznick} are identical for all truncations $f^{(n)}$. 
Let $\lambda_n = \min\{ f^{(n)}(x) ~|~ x \in \mathbb{S}^{n-1}\}$ and $\lambda_\infty = \inf \{ f(x) ~|~ x \in \R^\infty, p_2(x)=1\}$. Then $ \varepsilon \leq \lambda_\infty \leq \lambda_n$ for all $n \in \N$. In particular, for some fixed $k > \frac{d(d-1)}{2} \frac{L}{\lambda_\infty} -d $ and $N=2(d+k)$ the homogeneous polynomial $(p_2^{(N)})^kf^{(N)}$ of degree $N$ is a linear combination of even monomials, i.e., 
    \[ (p_2^{(N)})^kf^{(N)}= \sum_{\substack{\lambda \vdash 2(d+k) \\ 2 \mid \lambda_i }}c_\lambda m_\lambda^{(N)},  \]
    for some non-negative coefficients $c_\lambda$.
    Moreover, the base change of the power sum basis to the monomial symmetric basis in $(p_2^{(n)})^kf^{(n)}=(p_2^{(n)})^kg(p_2^{(n)},\dots,p_{2d}^{(n)})$ stabilizes for $n \geq N$. Thus, we find that
    $p_2^kf = \sum_{\lambda \vdash 2(d+k), 2 \mid \lambda_i} c_\lambda m_\lambda$ holds. 
\end{proof}

In \cite[Theorem~6.6]{acevedo2025wonderful} it was shown that deciding non-negativity of multi-homogeneous even symmetric functions in several groups of pairwise disjoint variables is undecidable. It remains an open question whether deciding non-negativity of homogeneous (even) symmetric functions is decidable. In contrast, the problem of deciding whether for a given parameter $\varepsilon > 0$ a homogeneous even symmetric function satisfies $f \geq \varepsilon $ is decidable by Corollary \ref{cor:infinite Polya}. 

We also obtain a non-homogeneous variant of the Positivstellensatz.
\begin{corollary}
    Let $f\in\R[p_2,p_4,\dots]$ be an even symmetric function and $\varepsilon > 0$ such that
    \[f^h(x_0,x)\geq \varepsilon \text{ for all } (x_0,x)\in \R^\infty \text{ with } x_0^2+p_2(x)=1,\] where $f^h$ denotes the homogenization of $f$ with respect to $X_0$. Then there is $k \in \N$ such that $(1+p_2)^k f$ is an any-dimensional sos.
\end{corollary}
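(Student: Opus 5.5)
The plan is to reduce the statement to Corollary \ref{cor:infinite Polya}, or more precisely to its proof via Theorem \ref{thm: powers and reznick}, applied to the homogenization in one extra variable $X_0$. Write $f = g(p_2,p_4,\dots,p_{2d})$ of degree $2d$. Then the homogenization is
\[ f^h(X_0,\X) = \sum_{\lambda} c_\lambda X_0^{2d-|\lambda|} p_{\lambda}(\X), \]
where $\lambda$ ranges over partitions with only even parts. This is an even homogeneous polynomial of degree $2d$ in the variables $X_0, X_1, X_2,\dots$. It is symmetric in $X_1,X_2,\dots$ but not in $X_0$, so Corollary \ref{cor:infinite Polya} cannot be quoted verbatim. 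Instead I will apply Theorem \ref{thm: powers and reznick} to each truncation $F_n := f^h(X_0,X_1,\dots,X_n)$, which is a polynomial in $n+1$ variables.

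The first step is to check that the parameters of Theorem \ref{thm: powers and reznick} are uniform in $n$. The degree $2d$ is fixed. By hypothesis the minimum $\lambda_n$ of $F_n$ on the sphere $x_0^2+\dots+x_n^2=1$ satisfies $\lambda_n\geq\varepsilon$; here the points $(x_0,x)$ with $x\in\R^n$ are exactly the ones allowed in the hypothesis. The key point is bounding the coefficient maximum $L_n$ of $F_n$ independently of $n$. A monomial $X_0^{a}X^\beta$ in $F_n$ arises only from the terms $c_\lambda X_0^{a}p_\lambda$ with $|\lambda| = 2d-a$. The coefficient of a fixed monomial $X^\beta$ in $p_\lambda^{(n)}$ counts the ways to assign the parts of $\lambda$ to variables so that the product is $X^\beta$. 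This count is at most $\ell(\lambda)!$ and does not depend on $n$. Hence $L_n\leq L:=\sum_\lambda |c_\lambda|\,\ell(\lambda)!$ for all $n$.

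The second step is to fix $k > \frac{d(d-1)}{2}\frac{L}{\varepsilon}-d$, which is independent of $n$. Theorem \ref{thm: powers and reznick} then gives that $(X_0^2+p_2^{(n)})^kF_n$ has non-negative coefficients for every $n$. All its monomials are even, since $F_n$ is even, so it is a sum of squares of monomials. Dehomogenizing by setting $X_0=1$ is a ring homomorphism, so it preserves sums of squares. It yields that $(1+p_2^{(n)})^k f^{(n)}$ is a sum of squares of monomials, hence lies in $\SSigma_n$, for every $n$. By definition this means that $(1+p_2)^kf$ is an any-dimensional sos. As a bonus, the stabilization argument from the proof of Corollary \ref{cor:infinite Polya} shows that $(1+p_2)^kf$ is a non-negative combination of even monomial symmetric functions.

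The main obstacle is the uniform coefficient bound $L$. The other hypotheses transfer immediately, but one must make sure that the coefficients of $p_\lambda$ in the monomial basis do not grow with the number of variables. They do not, because a single monomial has boundedly many preimages. A minor point is that the hypothesis is stated for $(x_0,x)\in\R^\infty$. I interpret this as $x_0$ together with finitely supported $x$, which is exactly what is needed for the truncations $F_n$.
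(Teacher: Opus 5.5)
Your proposal is correct and follows the paper's proof. Both apply Theorem~\ref{thm: powers and reznick} to the truncations $f^h(X_0,X_1,\dots,X_n)$ with parameters independent of $n$, and then dehomogenize at $X_0=1$. Your explicit $n$-independent bound on the coefficient maximum $L$, obtained by bounding the coefficient of a fixed monomial in $p_\lambda$, supplies a step the paper leaves implicit when it refers back to the argument of Corollary~\ref{cor:infinite Polya}.
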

\begin{proof}
    Again, we use Theorem \ref{thm: powers and reznick} to conclude that there exists a positive integer $k$ such that $(X_0^2+p_2^{(n)})^kf^h(X_0,X_1,\dots,X_n,0,0,\dots)$ is sos for all $n \in \N$. Thus, also its dehomogenization with respect $X_0$
    \[(1+p_2^{(n)})^k f^{(n)}\]
    is sos for all $n$. So, the symmetric function $(1+p_2)^kf$ is an any-dimensional sos. 
\end{proof}

\subsection{Any-dimensional Reznick's Positivstellensatz}

The following Positivstellensatz is due to Reznick and guarantees that the uniform denominator $p_2^{(n)}$, taken to some power $k$, can be used in a rational sum of squares representation of a homogeneous positive definite polynomial. 
\begin{theorem}[\cite{reznick1995uniform} Reznick 1995]\label{thm:reznick}
Let $f\in \R[X_1,\dots,X_n]$ be homogeneous and positive definite. Then there is $k\in \N_0$ such that
\[\left(\sum_{i=1}^n X_i^2\right)^kf\]
is a sum of squares.
\end{theorem}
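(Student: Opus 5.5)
The plan is to prove Theorem \ref{thm:reznick} in Reznick's original way: represent $(\sum_i X_i^2)^k f$ as a positive superposition of even powers of linear forms. Write $\sigma$ for the rotation-invariant probability measure on $\mathbb{S}^{n-1}$ and $2d=\deg f$. For a continuous function $c$ on the sphere and $m\geq d$ set
\[ T_m(c)(x) := \int_{\mathbb{S}^{n-1}} c(u)\,\langle u,x\rangle^{2m}\,d\sigma(u). \]
This is a form of degree $2m$. If $c\geq 0$, then $T_m(c)$ lies in the closed cone generated by the $2m$-th powers $\langle u,x\rangle^{2m}=(\langle u,x\rangle^{m})^2$. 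The goal is to find, for $m=d+k$ with $k$ large, a function $c$ that is strictly positive on the sphere and satisfies $T_m(c)=(\sum_i X_i^2)^k f$.

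\textbf{Step 1 (harmonic decomposition).} Decompose $f=\sum_{j=0}^{d} p_2^{\,d-j}h_{2j}$ with $h_{2j}$ harmonic of degree $2j$. By the Funk--Hecke formula, $T_m$ maps a spherical harmonic $h$ of degree $2j$ to $\lambda_{m,j}\,p_2^{\,m-j}h$, where $\lambda_{m,j}$ is an explicit nonzero scalar (a ratio of Gamma-function values). Define
\[ c_m := \sum_{j=0}^{d}\frac{\lambda_{m,0}}{\lambda_{m,j}}\,h_{2j}\Big|_{\mathbb{S}^{n-1}}. \]
Then $T_m(c_m)=\lambda_{m,0}\,p_2^{\,m-d}f$ by linearity.

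\textbf{Step 2 (the main obstacle: positivity of $c_m$).} The key point is to show that $\lambda_{m,0}/\lambda_{m,j}\to 1$ as $m\to\infty$ for each fixed $j\leq d$. From the explicit formula, this ratio is a product of $j$ factors of the form $\frac{2m+n+2i}{2m-2i}$, so it equals $1+O(d^2 n/m)$. Once this is established, $c_m\to\sum_j h_{2j}=f$ uniformly on the sphere. Since $f\geq \min_{\mathbb{S}^{n-1}} f>0$ there, $c_m>0$ on the sphere for all $m$ larger than some $m_0$. I expect the careful verification of this eigenvalue asymptotics to be the technical heart of the proof, and I would first check it in the case $n=2$ as a sanity test.

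\textbf{Step 3 (discretization).} For $m\geq m_0$, the form $p_2^{\,m-d}f=\lambda_{m,0}^{-1}T_m(c_m)$ is an integral of squares $\langle u,x\rangle^{2m}$ against the positive measure $\lambda_{m,0}^{-1}c_m\,d\sigma$. The space of forms of degree $2m$ is finite-dimensional, so Tchakaloff's theorem (or Carathéodory's theorem applied to the moment map $u\mapsto(u^\alpha)_{|\alpha|=2m}$) replaces this measure by a finite atomic measure with positive weights $a_i$ at points $u_i$. This gives
\[ p_2^{\,m-d}f=\sum_i a_i\langle u_i,x\rangle^{2m}, \]
which is a sum of squares, and taking $k=m-d$ proves the theorem.
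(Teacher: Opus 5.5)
Your proof is correct, but it is Reznick's original harmonic-analysis argument, not the route the paper relies on.

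The paper does not prove Theorem~\ref{thm:reznick} itself. It cites Reznick and points to Marshall's proof, whose strategy it then copies for Theorem~\ref{thm: dimension-independent reznick}. In $\R[X_1,\dots,X_n]$ the preordering $\sum \R[X_1,\dots,X_n]^2+(1-p_2)$ is Archimedean with character space $\mathbb{S}^{n-1}$. The representation theorem therefore gives $f=\sigma+r(1-p_2)$ with $\sigma$ sos. One then substitutes $X_i/\sqrt{p_2}$, multiplies by a power of $p_2$, and uses that $p_2$ is not a square in the fraction field to discard the part that is odd in $\sqrt{p_2}$.

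Your route buys more. The ratio you quote, $\lambda_{m,0}/\lambda_{m,j}=\prod_{i=0}^{j-1}\frac{2m+n+2i}{2m-2i}$, is indeed what Rodrigues' formula and a Beta integral give. With it, $p_2^kf$ becomes a positive combination of $2(d+k)$-th powers of real linear forms. Bounding the $h_{2j}$ on the sphere in terms of $f$ also yields an explicit $k$, in the spirit of Remark~\ref{rem:boundreznick}. These are exactly the two strengthenings the paper leaves open in the any-dimensional setting.

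The price is what makes your route unusable there. Already for $j=1$ the ratio is $1+\frac{n}{2m}$, so the $k$ your argument produces depends on $n$. Moreover, the rotation-invariant probability measure on $\mathbb{S}^{n-1}$ you integrate against has no counterpart on the unit sphere of $\ell^2$. Marshall's algebraic argument gives neither a bound nor powers of linear forms. However, it only needs Archimedeanity and a description of the character space, which is why it carries over to infinitely many variables once Lemma~\ref{lem: T is Archimedean} and Corollary~\ref{cor:K_T-description} are in place.
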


\begin{remark}\label{rem:boundreznick}
Reznick actually proves a stronger result \cite[Theorem~3.12]{reznick1995uniform}. He shows that if $K\subseteq \R$ is an ordered field, $f\in K[X_1,\dots,X_n]$ is homogeneous and positive definite of degree $2d$ and 
\[k\geq \frac{n\cdot 2d(2d-1)}{(4\log 2)\varepsilon(f)}-\frac{n+2d}{2}, \quad \text{where} \quad \varepsilon(f):=\frac{\inf \{ f(x) ~|~ x \in \mathbb{S}^{n-1} \}}{\sup \{ f(x) ~|~ x \in \mathbb{S}^{n-1} \}},\] 
then $\left(\sum_{i=1}^n X_i^2\right)^kf$ is a non-negative $K$-linear combination of $(2d+2k)$-th powers of linear forms in $\Q[X_1,\dots,X_n]$.
\end{remark}
Again, it is possible that $(p_2^{(n)})^k f$ is sos for some $k$ less than the bound.
Unlike the bound by Powers and Reznick for Pólya's Positivstellensatz, the bound in Remark \ref{rem:boundreznick} for Theorem \ref{thm:reznick} depends on the number of variables. We are therefore not aware of a similarly simple proof of the following theorem as for the symmetric function analog of Pólya's Positivstellensatz (Corollary~\ref{cor:infinite Polya}).

\begin{theorem}\label{thm: dimension-independent reznick}
    Let $d \in \N$ and $\varepsilon > 0$. Let $f\in \R[p_2,p_3,\dots,p_{2d}]$ (i.e., $f$ is a symmetric function where $p_1$ does not appear) be homogeneous of degree $2d$ and suppose that 
    \[
    f(x)\geq \varepsilon \text{ for all } x\in \R^\infty \text{ with } p_2(x)=1.
    \]
    Then there exists $k\in \N_0$ such that $p_2^kf$ is an any-dimensional sos.
\end{theorem}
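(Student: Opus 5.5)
The plan is to work in the ring $A:=\R[p_2,p_3,\ldots]$ and apply a Kadison--Dubois / Jacobi type Archimedean Positivstellensatz to the preordering (or quadratic module) $T$ generated by $\sum A^2$ together with the ideal $(1-p_2)$. This is the route the introduction announces. The overall structure is as follows.

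First, we reduce to the dehomogenized statement. Since $f$ is homogeneous of degree $2d$, it suffices to show that
\[f \equiv \sigma \mod (1-p_2), \qquad \sigma=\sum_j g_j^2, \quad g_j\in A.\]
Then a standard homogenization argument applies: compare homogeneous components and multiply the pieces $g_j$ of different degree by suitable powers of $p_2$. This gives $p_2^k f=\sum_j h_j^2$, where the $h_j$ are symmetric functions (possibly after splitting odd-degree parts using the fact that $p_2$ is itself a sum of squares $\sum X_i^2$). Truncation is a ring homomorphism, so $p_2^kf\in\SSigma$.

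Second, we establish the Archimedean property. We need $N\pm p_m\in T$ for every $m\geq 2$. Modulo $1-p_2$ we have $1-p_{2m}\equiv p_2^m-p_{2m}$, and $p_2^m-p_{2m}$ has non-negative coefficients in the even monomial basis. It is therefore a sum of squares of monomial symmetric expressions, or at least an any-dimensional sos; we want it to be sos in $A$ itself. Here I would instead try to write $p_2^m-p_{2m}$ explicitly as a sum of squares of elements of $A$, e.g. through identities such as $p_2^2-p_4=2e_2(X_1^2,X_2^2,\dots)$. If this cannot be done inside $A$, the fallback is to enlarge the ring to all symmetric-function-valued sums of squares and to work with the convex cone of any-dimensional sos, using the Archimedean Positivstellensatz for cones. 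Once $1\pm p_{2m}$ and $1\pm p_m$ are in $T$ (via $p_m^2\le p_{2}^{m}$-type bounds combined with $1\pm p_m=\tfrac12\big((1\pm p_m)^2+(1-p_m^2)\big)$), the Archimedean property holds.

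Third, we characterize the character space $K_T=\{\varphi:A\to\R \text{ ring hom}, \varphi(T)\ge0\}$. Each such $\varphi$ is determined by $(\varphi(p_2),\varphi(p_3),\dots)$ with $\varphi(p_2)=1$. I will show that $K_T$ is contained in the closure of $\{(p_2(x),p_3(x),\dots): x\in\R^\infty,\ p_2(x)=1\}$ in the product topology. To do so, consider the measure $\mu_x=\sum_i x_i^2\delta_{x_i}$ on $[-1,1]$, whose moments are $m_j=p_{j+2}(x)$. The positivity of $\varphi$ on squares $q(p_2,p_3,\dots)^2$ forces the sequence $m_j=\varphi(p_{j+2})$ to be a positive semidefinite Hausdorff moment sequence on $[-1,1]$, so it comes from a probability measure $\mu$. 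However, characters must also be multiplicative on products like $p_ap_b$, and this rigidity should force $\mu$ to be a weak limit of measures of the form $\mu_x$. This identification is the main obstacle. I would attempt it by showing that the characters are exactly the limits of evaluations, using the positivity of $\varphi$ on the symmetric sos $p_2^m-p_{2m}$ and more general even-monomial positivity to extract the "atoms" $x_i$ and a possible mass at $0$. Mass at $0$ arises as the limit of many small coordinates, like the example $(1/\sqrt n,\dots)$.

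Finally, the hypothesis $f\ge\varepsilon$ on $\{p_2=1\}\cap\R^\infty$ extends by continuity to $f\ge\varepsilon>0$ on the closure. Continuity holds because $f$ depends only on finitely many coordinates $p_2,\dots,p_{2d}$. Hence $f>0$ on $K_T$, and the Kadison--Dubois representation theorem gives $f\in T$, that is, $f\equiv\sigma$ modulo $(1-p_2)$. The first step then completes the proof.
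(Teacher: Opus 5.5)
Your overall architecture matches the paper's: an Archimedean preordering in $A=\R[p_2,p_3,\dots]$, the character space, the representation theorem, then homogenization. As written, however, the proposal has two genuine gaps.

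\textbf{First gap: the preordering.} The preordering $\sum A^2+(1-p_2)A$ cannot work. Every ring homomorphism $A\to\R$ is non-negative on squares of elements of $A$. So its character space consists of all $\varphi$ with $\varphi(p_2)=1$, with all other values free, and this set is not compact. Concretely, $2p_2^2-p_4\ge 1$ on the sphere, but it equals $-1$ at a character with $\varphi(p_4)=3$; likewise $N-p_4$ never lies in this preordering. The fallback you mention, $T=(\SSigma\cap A)+(1-p_2)A$, is therefore forced, and it is exactly the paper's choice.

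With this $T$ your Archimedean step can be completed, but you need a certificate, not just the inequality $p_m^2\le p_2^m$. For example,
$p_2^m-p_m^2=p_2(p_2^{m-1}-p_{2m-2})+\tfrac12\sum_{i,j}(X_iX_j^{m-1}-X_jX_i^{m-1})^2\in\SSigma$. This is a valid alternative to the paper's Markov--Luk\'acs argument.

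Your step-three claim is then false as stated: positivity on $q(p_2,p_3,\dots)^2$ is vacuous and yields no Hankel condition. The moment property comes from elements of $\SSigma\cap A$ that are not squares in $A$, together with Riesz--Haviland and Markov--Luk\'acs. These elements are $\sum_iX_i^2h(X_i)^2=\sum_k a_kp_{k+2}$ and
$\sum_iX_i^2(1-X_i^2)h(X_i)^2=(1-p_2)\sum_iX_i^2h(X_i)^2+\sum_{i<j}X_i^2X_j^2(h(X_i)^2+h(X_j)^2)$.

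Your homogenization step is fine once $\sigma$ is allowed to lie in $\SSigma\cap A$. Averaging over $X\mapsto -X$, splitting each square into even and odd parts, and writing $p_2A_j^2+B_j^2$ is a legitimate, more elementary substitute for the paper's $\sqrt{p_2}$ argument.

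\textbf{Second gap: the character space.} The more serious problem is that the identification of $\mathcal{K}_T$ with the closure, which is the heart of the proof, is only hoped for, and the tool you name cannot deliver it. The elements $p_2^m-p_{2m}$ and the even monomial symmetric functions lie in $\R[p_2,p_4,\dots]$, so they only see $\varphi$ on even power sums.

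Consider the character with $\varphi(p_{2k})=1$ and $\varphi(p_{2k+1})=0$, i.e.\ the moments of $\tfrac12(\delta_1+\delta_{-1})$. On $\R[p_2,p_4,\dots]$ it agrees with evaluation at $(1,0,0,\dots)$, and it passes all Hausdorff conditions. Yet it is not in the closure: $p_4\approx 1=p_2$ forces one coordinate near $\pm1$, hence $p_3\approx\pm1$. Moreover, $f=p_3^2+p_2^3-p_2p_4$ satisfies $f\ge\tfrac14$ on the sphere while $\varphi(f)=0$.

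The missing idea is to test $\varphi$ on the localized elements $e_k(X_1^2h(X_1)^2,X_2^2h(X_2)^2,\dots)\in\SSigma\cap A$, where $h$ is a polynomial approximating a bump at a single point. Write $e_k=g(p_1,\dots,p_k)$ and $p_{m,h}=p_m(X_1^2h(X_1)^2,\dots)$. By multiplicativity, $\varphi$ of this element equals $g(\varphi(p_{1,h}),\dots,\varphi(p_{k,h}))$, where $\varphi(p_{m,h})=\int t^{2m-2}h(t)^{2m}\,\mathrm{d}\mu(t)$ and $g(t,\dots,t)=\frac{1}{k!}\prod_{i=0}^{k-1}(t-i)$.
\begin{itemize}
\item For $k=2$ this forces $\mu$ to be discrete with isolated non-zero atoms; it already kills the example above.
\item For general $k$ it forces $\mu(\{x\})/x^2\in\N$ for every non-zero atom $x$.
\end{itemize}
Only then is $\mu=c\,\delta_0+\sum_iN_ix_i^2\delta_{x_i}$ with $N_i\in\N$. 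Such a measure is approximated by $\mu_y$, $y\in\R^\infty$, by taking each $x_i$ with multiplicity $N_i$ and $n$ extra coordinates equal to $\sqrt{c/n}$.
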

Our proof of the theorem relies on the representation theorem for Archimedean $T$-modules from Jacobi \cite{jacobi2001representation} and Marshall \cite{marshall2002general}. 

Before stating the representation theorem we recall basic terminology from real algebra and refer the interested reader to \cite[Chapters~2~and~5]{marshall2008positive} or \cite[Chapter~3]{scheiderer2024course}. Let $A$ be a commutative ring containing $\Q$.
A \emph{preordering} of $A$ is a subset $T$ of $A$ with $T+T\subset T, T\cdot T \subset T$ and $a^2 \in T$ for all $a \in A$. Note that $\sum A^2$ is the unique smallest preordering of $A$. A preordering $T$ is called \emph{Archimedean} if for all $a \in A$ there exists a positive integer $n$ with $n+ a \in T$. Let $T$ be a preordering of $A$. A subset $M$ of $A$ is called a \emph{$T$-module} if $M+M \subset M, T\cdot M \subset M$ and $1 \in M$. In particular, $T$ is a $T$-module and every $T$-module contains $T$. A $T$-module $M$ is \emph{Archimedean} if for all $a \in A$ there exists a positive integer $n$ with $n+a \in M$. Note that if $T$ is Archimedean then every $T$-module is Archimedean. 
Finally, we denote the set of all unitary ring homomorphisms $A \to \R$ which evaluate non-negatively on a set $S \subset A$ by $\mathcal{K}_S$, i.e.,
\[ \mathcal{K}_S = \{ \varphi \in \hom (A,\R) ~|~ \varphi (S) \subset [0,\infty)\}.\]
The set $\mathcal{K}_S$ is called the \emph{character space} of $S$. We will apply the following result.

\begin{theorem}[\cite{marshall2008positive}~5.4.4~Representation theorem] \label{thm: representation theorem}
Suppose $M \subset A$ is an Archimedean $T$-module and $T$ is a preordering of $A$. Then, every element $a \in A$ with $\varphi(a)>0$ for all $\varphi \in \mathcal{K}_M$ satisfies $a \in M$.
\end{theorem}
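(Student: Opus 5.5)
The plan is to argue by contraposition in the Kadison--Dubois style. Assume $a\notin M$, and produce a character $\varphi\in\mathcal{K}_M$ with $\varphi(a)\leq 0$.

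\emph{Step 1 (compactness and a margin).} Because $M$ is Archimedean, for every $b\in A$ there is $n_b$ with $n_b\pm b\in M$. Hence every $\varphi\in\mathcal{K}_M$ satisfies $|\varphi(b)|\leq n_b$. So $\mathcal{K}_M$ embeds as a closed subset of the product $\prod_{b\in A}[-n_b,n_b]$, and it is compact by Tychonoff. Since $\varphi\mapsto\varphi(a)$ is continuous, the hypothesis gives a rational $\delta>0$ with $\varphi(a-\delta)>0$ for all $\varphi\in\mathcal{K}_M$. This margin will be used to absorb bounded multipliers later.

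\emph{Step 2 (a maximal module avoiding $a$).} By Zorn's lemma, choose a $T$-module $Q\supseteq M$ that is maximal with respect to $a\notin Q$. Then $Q$ is Archimedean and proper. I want to show $Q\cup -Q=A$. Suppose $b\notin\pm Q$. Then $a=q_1+t_1b$ and $a=q_2-t_2b$ with $q_i\in Q$ and $t_i\in T$. Multiplying the first equation by $t_2$, the second by $t_1$, and adding gives $(t_1+t_2)a\in Q$.

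The main obstacle is passing from $(t_1+t_2)a\in Q$ to a contradiction, since $Q$ is only a $T$-module and is not closed under products. First I would use the Archimedean property to write $t:=t_1+t_2$ with $n-t\in Q$. The idea is then to replace $a$ throughout by the strictly positive $a-\delta$ from Step 1. Jacobi's lemma states that for Archimedean $M$ the preordering can be enlarged to an Archimedean one, so that $n-t$ can be taken in $T$. With $n-t\in T$ one gets $(n-t)a\in Q$ as soon as $a\in Q$, and an iteration of the form $na=(n-t)a+ta$ would close the argument. I expect this bookkeeping, following Marshall's proof of 5.4.4, to be the technical heart.

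\emph{Step 3 (extract the character).} Once $Q\cup-Q=A$, define
\[
\varphi(b):=\inf\{r\in\Q : r-b\in Q\}.
\]
This is finite because $Q$ is Archimedean and $Q\neq A$. I then check three things. Additivity follows directly from $Q+Q\subseteq Q$. Next, $\varphi(1)=1$. Multiplicativity comes from first treating $b,c$ with $\varphi(b),\varphi(c)>0$, using $T\cdot Q\subseteq Q$ together with squares in $T$ and the identity $bc=\tfrac14\bigl((b+c)^2-(b-c)^2\bigr)$, and then shifting by constants to cover general $b,c$. The construction gives $\varphi(Q)\geq0$, so $\varphi(M)\geq 0$ and $\varphi\in\mathcal{K}_M$. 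Finally, $a\notin Q$ together with $Q\cup-Q=A$ forces $-a\in Q$, hence $\varphi(a)\leq 0$. This contradicts the hypothesis of the theorem, and therefore $a\in M$.
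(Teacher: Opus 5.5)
The paper only quotes this result from Marshall, so I compare your proposal with the standard argument. Your Steps 1 and 3 are fine. Step 2 has a genuine gap, and it is exactly the part you postpone as bookkeeping.

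\textbf{Where Step 2 fails.} Having $(t_1+t_2)a\in Q$, you propose to finish via $na=(n-t)a+ta$, using ``$(n-t)a\in Q$ as soon as $a\in Q$''. But $a\notin Q$ by the choice of $Q$, so this is circular.

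The lemma you attribute to Jacobi is also false in the form your argument needs, namely an Archimedean preordering $T'\supseteq T$ over which $M$ is still a module. Take $A=\R[x,y]$ and the quadratic module $M=\sum A^2+(1-x^2)\sum A^2+(1-y^2)\sum A^2$. It is Archimedean, yet $(n+y)(1-x^2)\notin M$ for every $n\in\N$. To see this, suppose $(n+y)(1-x^2)=\sigma_0+\sigma_1(1-x^2)+\sigma_2(1-y^2)$. Setting $x=\pm1$ forces $(1-x^2)^2\mid\sigma_0,\sigma_2$. Cancelling $1-x^2$ and evaluating at $(1,-2n)$ gives $-n=\sigma_1(1,-2n)\geq 0$, a contradiction. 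Hence no Archimedean $T'$ satisfies $T'M\subseteq M$.

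Note also that Step 2 never actually uses the hypothesis on $a$; the margin $\delta$ plays no role. This matters because totality of $Q$, combined with Step 3, already yields the theorem, so it cannot be mere bookkeeping.

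\textbf{The usual repair.} Maximize with respect to $-1\notin Q$ among $T$-modules containing $M-Ta$.
\begin{enumerate}
\item Totality is then easy. If $b\notin\pm Q$, your computation gives $-(t_1+t_2)\in Q$, so $t_1,t_2\in Q\cap -Q$. This set is an ideal because $T\supseteq\sum A^2$ and $\tfrac12\in A$. Hence $-1=q_1+t_1b\in Q$, a contradiction.
\item If $-1\notin M-Ta$, Step 3 therefore yields $\varphi\in\mathcal{K}_M$ with $\varphi(a)\leq 0$, contradicting the hypothesis. So $-1\in M-Ta$, i.e.\ $ta=1+m$ with $t\in T$ and $m\in M$.
\item Your descent idea now works in correct form, provided $T$ is Archimedean (as in the paper's application, where $M=T$). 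Choose $k$ with $k-t\in T$. If $a+\ell\in M$ for a rational $\ell\geq 0$, then $k(a+\ell)-1=(k-t)(a+\ell)+m+\ell t\in M$, so $a+\ell-\tfrac1k\in M$. Iterating from large $\ell$ until $\ell<0$ gives $a\in M$.
\end{enumerate}
For a general preordering $T$, no such bounded multiplier exists. A typical case is $T=\sum A^2$ with $M$ an Archimedean quadratic module, the Putinar situation. Handling this case is precisely the additional content of Jacobi's theorem beyond Kadison--Dubois, and your proposal does not supply it.
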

The representation theorem is actually more general and allows $T$ to be a quasi-preordering of $A$.

\begin{remark}
In Marshall's book \cite[Chapter~5.5]{marshall2008positive} a proof of Reznick's Positivstellensatz, as formulated in Theorem \ref{thm:reznick}, is presented. 
We follow closely Marshall's proof strategy but point out that the Archimedean property of the considered preordering $T$ and the classification of the set of all non-negative ring homomorphisms on $T$ seem non-trivial. In fact, the characterization of the character space $\mathcal{K}_T$ can be seen as a description of the closure of the orbit space  of $\left\{x \in \ell^2 ~\middle| ~ \Vert x\Vert_2=1 \right\}$ with respect to the infinite symmetric group similarly to Procesi's and Schwarz's characterization for the symmetric group \cite{procesi1985inequalities}.
\end{remark}

The outline of the proof of Theorem \ref{thm: dimension-independent reznick} is as follows.
In the ring \[A:=\R[p_2,p_3,\ldots] \subset \Lambda\] we define the preordering \[T := (\SSigma \cap A) + I,\] where $I$ is the principal ideal generated by $1-p_2$ in $A$. We show that $T$ is Archimedean in Lemma \ref{lem: T is Archimedean}. We prove that the character space 
\[\mathcal{K}_T:= \{ \varphi \in \hom_\R(A,\R) ~|~ \varphi(f) \geq 0 ~ \forall f \in T\} \]
is the closure of the set $\left(p_2,p_3,\ldots\right)\left(\left\{x\in \R^\infty ~\middle|~ p_2(x)=1\right\}\right)$ with respect to the product topology in Corollary \ref{cor:K_T-description}. 
Then it follows from the representation theorem that $f \in T$ and we  manipulate this certificate to conclude that $p_2^kf$ is a sos for some $k \in \N$. 

In the remaining part of this section we always consider the ring $A = \R[p_2,p_3,\ldots]$, the ideal $I = (1-p_2)$ and the preordering $T = (\SSigma \cap A)+I$ in $A$, and the set $\mathcal{K}_T = \{ \varphi \in \operatorname{Hom}(A,\R) ~\mid~ \varphi (T) \subset \R_{\geq 0}\}$ denotes the set of all unitary ring homomorphisms $A \to \R$ which are non-negative on $T$. Because $A$ is generated by the algebraically independent elements $p_2,p_3,\ldots$ every $\varphi \in \mathcal{K}_T$ is uniquely defined by the sequence $(\varphi(p_{n}))_{n \geq 2}$.  

Our classification of $\mathcal{K}_T$ relies on the identification of $(\varphi(p_{n}))_{n \geq 2}$ with the moments of certain discrete probability measures on the compact interval $[-1,1]$. We continue with the proof of Theorem \ref{thm: dimension-independent reznick}, but the proofs of the key ingredients mentioned above will be presented in Section \ref{sec: orbit space}.

\begin{proof}[Proof of Theorem \ref{thm: dimension-independent reznick}]
It follows from Remark \ref{rem: same images} and Corollary \ref{cor:K_T-description} that $f(x) \geq \varepsilon $ for all $x \in \R^{\infty}$ with $p_2(x)=1$ implies that $\varphi(f)>0$ for all $\varphi \in \mathcal{K}_T$. So, we can conclude that $f \in T$ by the Archimedeanity of $T$ (Lemma \ref{lem: T is Archimedean}) and the Representation Theorem \ref{thm: representation theorem}, i.e., 
$f = \sum_{i=1}^\infty g_i^2 +r \cdot (1-p_2) $, where $r \in A$ and $g_i \in \R[[\X]]_{\leq \ell}$ are formal power series of degree at most $\ell$ for some $\ell \in \N$.
We substitute $\frac{X_i}{\sqrt{p_2}}$ for each $X_i$ and obtain in the field $\operatorname{Quot}(\R[[\X]])(\sqrt{p_2})$
\begin{align*}
 \frac{1}{\sqrt{p_2}^{2d}}f(\X) &= f\left(\frac{X_1}{\sqrt{p_2}},\frac{X_2}{\sqrt{p_2}},\ldots\right) \\
  &=\sum_{i=1}^\infty g_i^2\left(\frac{X_1}{\sqrt{p_2}},\frac{X_2}{\sqrt{p_2}},\ldots\right) + r\left(\frac{X_1}{\sqrt{p_2}},\frac{X_2}{\sqrt{p_2}},\ldots\right) \cdot \left( 1- \sum_{i=1}^\infty \frac{X_i^2}{\sqrt{p_2}^2}\right) \\  
 & = \sum_{i=1}^\infty g_i^2\left(\frac{X_1}{\sqrt{p_2}},\frac{X_2}{\sqrt{p_2}},\ldots\right)+r\left(\frac{X_1}{\sqrt{p_2}},\frac{X_2}{\sqrt{p_2}},\ldots\right) \cdot \left( 1-  \frac{p_2}{p_2}\right) \\
 & = \sum_{i=1}^\infty g_i^2\left(\frac{X_1}{\sqrt{p_2}},\frac{X_2}{\sqrt{p_2}},\ldots\right) .
\end{align*}
We multiply both sides of the equation with $p_2^{2N}$ for sufficiently large $N$ and obtain 
\begin{align}\label{eq: p2N times f}
p_2^{2N-d} f(\X) = \sum_{i=1}^\infty p_2^{2N}g_i^2\left(\frac{X_1}{\sqrt{p_2}},\frac{X_2}{\sqrt{p_2}},\ldots\right) = \sum_{i=1}^\infty \left( g_{i,1}(\X)+g_{i,2}(\X)\sqrt{p_2}\right)^2    
\end{align}
where $g_{i,1},g_{i,2} \in \R[[\X]]$ are of degree at most $2N$. This can be seen as follows, where $\X^\alpha := \prod_{i=1}^\infty X_i^{\alpha_i}$:
\[ p_2^{N} \sum_{\substack{\alpha \in \N_0^{\N} \\ |\alpha| \leq \ell}} c_\alpha \prod_{i=1}^\infty \frac{X_i^{\alpha_i}}{\sqrt{p_2}^{\alpha_i}}
=\sum_{\substack{\alpha \in \N_0^{\N} \\ |\alpha| \leq \ell}} c_\alpha p_2^{N} \frac{1}{\sqrt{p_2}^{|\alpha|}} \X^\alpha = \sum_{\substack{\alpha \in \N_0^{\N}, |\alpha| \leq \ell \\ |\alpha| \in 2\N}}c_\alpha p_2^{N-\frac{|\alpha|}{2}} \X^\alpha+\sum_{\substack{\alpha \in \N_0^{\N}, |\alpha| \leq \ell \\ |\alpha| \not \in 2\N}}c_\alpha p_2^{N-\left\lceil\frac{|\alpha|}{2}\right\rceil} \X^\alpha \sqrt{p_2}.\]
Thus, equation (\ref{eq: p2N times f}) implies
\[ p_2^{2N-d}f(\X) = \sum_{i=1}^\infty (g_{i,1}(\X)^2+2g_{i,1}(\X)g_{i,2}(\X)\sqrt{p_2}+g_{i,2}(\X)^2p_2).\]

Finally, we verify that $p_2$ is not a square in $\operatorname{Quot}\left(\R\left[\left[\X\right]\right]\right)$ which shows that $\sum_{i=1}^\infty g_{i,1}(\X)g_{i,2}(\X)=0$ must hold in equation (\ref{eq: p2N times f}). This will prove that
$p_2^{2N-d}f(\X) \in \SSigma.$ 
Suppose that there are $r,s \in \R[[\X]]\setminus\{0\}$ with $r^2 = p_2s^2$. The element $p_2$ is irreducible in $\R[[\X]]$. Indeed, comparing lowest degree homogeneous parts, a nontrivial factorization would force $p_2$ to be a product of two real linear forms, which is impossible. Furthermore, $\R[[\X]]$ is a UFD by a theorem of Nishimura \cite{nishimura1967unique}, therefore the element $p_2$ is prime.\footnote{Alternatively, one can truncate to finitely many variables, where both the UFD property and the irreducibility of $p_2^{(n)}$ are standard facts.} Now the exponent of $p_2$ in the factorization of $r^2$ is even, whereas the exponent of $p_2$ in the factorization of $p_2s^2$ is odd, which is a contradiction.
\end{proof}

Next, we verify that $T$ is indeed Archimedean.
\begin{lemma}\label{lem: T is Archimedean}
    The set $T = (\SSigma \cap A) + I \subset A$ is an Archimedean preordering.
\end{lemma}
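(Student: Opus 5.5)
The plan is to check the preordering axioms directly, and then reduce Archimedeanity to the ring generators $p_2,p_3,\ldots$ of $A$ using the standard fact about bounded elements.

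\emph{Preordering.} The set $\SSigma\cap A$ is closed under addition and multiplication. Indeed, if $f,g\in A$ satisfy $f^{(n)},g^{(n)}\in\SSigma_n$ for all $n$, then $(f+g)^{(n)}$ and $(fg)^{(n)}=f^{(n)}g^{(n)}$ are symmetric sums of squares, because truncation is a ring homomorphism. For every $a\in A$ we have $a^2\in\SSigma\cap A$. Since $I$ is an ideal of $A$,
\[
(s+i)+(s'+i')\in (\SSigma\cap A)+I,\qquad (s+i)(s'+i')=ss'+(si'+is'+ii')\in(\SSigma\cap A)+I .
\]
Hence $T$ is a preordering of $A$, and it contains $\sum A^2$.

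\emph{Reduction to generators.} Let $H:=\{a\in A \mid n\pm a\in T \text{ for some } n\in\N\}$ be the set of bounded elements. Since $T$ is a preordering and $\Q\subset A$, $H$ is a subring of $A$ (see \cite[Proposition~5.2.3]{marshall2008positive}). It contains $\R$, because $r+r'\in T$ for real constants $r'$ large enough. So it suffices to show that $c\pm p_k\in T$ for each $k\ge 2$ and some constant $c$; then $H=A$ and $T$ is Archimedean.

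\emph{Even power sums.} Fix $m\geq 1$. The element $p_{2m}=\sum_i (X_i^m)^2$ is an any-dimensional sos lying in $A$, so $1+p_{2m}\in T$. For the other sign, expand $p_2^m$ in monomial symmetric functions. Every monomial occurring in it is even with a non-negative coefficient, and $p_{2m}=m_{(2m)}$ appears in it with coefficient $1$. Therefore $p_2^m-p_{2m}$ is a non-negative combination of even monomial symmetric functions. It is thus an any-dimensional sos (each truncation is a sum of squares of monomials), and it lies in $A$. Since $1-p_2^m=(1-p_2)(1+p_2+\dots+p_2^{m-1})\in I$, we obtain
\[
1-p_{2m}=(1-p_2^m)+(p_2^m-p_{2m})\in T .
\]

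\emph{Odd power sums.} Fix $m\ge1$. The identity
\[
p_{2m}\pm2p_{2m+1}+p_{2m+2}=\sum_i X_i^{2m}(1\pm X_i)^2
\]
exhibits the left-hand side as an element of $\SSigma\cap A$. Adding $1-p_{2m}\in T$ and $1-p_{2m+2}\in T$ gives $2\pm 2p_{2m+1}\in T$, and hence $1\pm p_{2m+1}\in T$.

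\emph{Main obstacle.} The only delicate point is to make sure that every certificate used lies in $A$, so that no $p_1$ appears, and is genuinely any-dimensional sos. Both are visible from the explicit expressions above. With the bounded-element subring argument, every generator of $A$ is bounded, so $H=A$ and $T$ is Archimedean.
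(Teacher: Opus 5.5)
Your proof is correct, but for the key step it takes a different route from the paper.

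\textbf{The paper's route.} It treats every $k\ge 2$ uniformly. The Markov--Luk\'acs theorem gives $t^2(1\pm t^{k-2})=s_\pm(t)+r_\pm(t)(1-t^2)$ with univariate sums of squares $s_\pm,r_\pm$. The paper sums this identity over the variables and rewrites $1-X_i^2=(1-p_2)+\sum_{j\neq i}X_j^2$, which yields $p_2\pm p_k=\sigma_1+\sigma_2(1-p_2)$ with $\sigma_1,\sigma_2\in\SSigma$. It then needs a small extra argument that $s_\pm,r_\pm$ have no constant or linear term, so that the lifted sums lie in $A$.

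\textbf{Your route.} You split by parity and write explicit certificates. For $p_{2m}$ you use the non-negative multinomial expansion of $p_2^m$ together with $1-p_2^m\in I$. For $p_{2m+1}$ the squares $X_i^{2m}(1\pm X_i)^2$ reduce to the even case. This needs no univariate Positivstellensatz on $[-1,1]$. Membership in $A$ is also automatic, since every certificate is visibly a polynomial in the $p_j$ with $j\ge 2$.

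\textbf{What the paper's route buys.} First, a single lifting mechanism (a univariate certificate on $[-1,1]$, multiplied by $X_i^2$ and summed) that reappears in Lemma~\ref{lem: K_T classification}(1). Second, the sharper statement $p_2\pm p_k\in T$. This is quoted in Case~1 of the proof of Corollary~\ref{cor:K_T-description non-homogeneous}, where $\varphi(p_2)=0$ must force $\varphi(p_k)=0$. The weaker bound $1\pm p_k\in T$ alone would not suffice there.

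Your argument gives the sharper form with a one-line change. Use $p_2-p_2^m=p_2(1-p_2^{m-1})\in I$ to get $p_2-p_{2m}\in T$; the odd case then yields $2p_2\pm 2p_{2m+1}\in T$. As in the paper, your multipliers of $1-p_2$ (namely $1+p_2+\dots+p_2^{m-1}$, or $p_2(1+\dots+p_2^{m-2})$) are themselves sums of squares. That is what lets such certificates pass to $\widetilde T$, via $\sigma(1-p_2)=\sigma(1-p_2-X_0^2)+\sigma X_0^2$.
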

\begin{proof}
We first show that $T$ is a \emph{preordering}. Therefore, we need to verify that $T + T$, $T\cdot T$ and $A^2$ are subsets of $T$. The first and last inclusion is clear, while we also have 
\[(\sigma_1+r_1)(\sigma_2+r_2) =\sigma_1\sigma_2 +r_1(\sigma_2+r_2)+r_2\sigma_1 \in T \]
for all $\sigma_1,\sigma_2 \in \SSigma \cap A$ and $r_1,r_2 \in I$. \\
Next, we prove that $T$ is Archimedean.
    We show that $1 \pm p_k \in T$ for all $k \geq 2$.
    This suffices because the set 
    \[H_T := \{a \in A ~\mid~ \exists n \in \N~ n \pm a \in T\}\] contains $\R_{\geq 0}$ and is a subring of $A$ by \cite[5.2.3~Proposition]{marshall2008positive}. \\
    Observe that $1 \pm p_k = 1-p_2 + (p_2 \pm p_k)$. Because $1 - p_2 \in T$, we can alternatively verify that $p_2 \pm p_k \in T$ holds. We prove this by extending a representation of an univariate polynomial $t^2 \pm t^k\in \R[t]$ as a sum of squares plus an element in the ideal $(1-t^2)\subset \R[t]$ to a certificate for $p_2 \pm p_k \in T$. 
    Note that $t^2(1-t^{k-2}) \geq 0$ on the set $[-1,1]$. By the Markov-Lukács theorem \cite{lukacs1918verscharfung} and  \cite[Theorem~1.21.1]{szego1975orthogonal} we obtain 
    \[ t^2(1 \pm t^{k-2})  = s_{\pm}(t)+r_{\pm}(t)(1-t^2)\]
    for some sums of squares $s_{\pm}(t), r_{\pm}(t) \in \R[t]$.\footnote{In the literature, the Markov-Lukács theorem is often formulated differently. For $f(t) \geq 0$ on $[-1,1]$ there are $p,q,s,r \in \R[t]$ such that $f=p^2+(1-t^2)q^2$ (if $\deg f$ is even) and $f=(1-t)s^2+(1+t)r^2$ (if $\deg f$ is odd). As observed in \cite[Corollary~3]{powers2000polynomials} the identity $1 \pm t = \frac{(1 \pm t)^2}{2} + \frac{1}{2}(1-t^2)$ proves that we can independently of the parity of $\deg(f)$ find a representation of $f$ of the form $\sigma_1+(1-t^2)\sigma_2$, where $\sigma_1,\sigma_2 \in \R[t]$ are sos.}
    We observe that $\sum_{i=1}^\infty s_{\pm}(X_i)$ and $\sum_{i=1}^\infty r_{\pm}(X_i) \in A$ holds. This is because $0=s_{\pm}(0)+r_{\pm}(0) $ and $s_{\pm}(0)\geq 0, r_{\pm}(0) \geq 0$ implies that $s_{\pm}(t)$ and $r_{\pm}(t)$ have no non-zero constant term. Since these are sums of squares, they also cannot have a linear term. Thus,
    \begin{align*}
    p_2 \pm p_k& = \sum_{i=1}^\infty X_i^2(1 \pm X_i^{k-2}) = \sum_{i=1}^\infty s_{\pm}(X_i) + \sum_{i=1}^\infty r_{\pm}(X_i)\left(1-X_i^2\right) \\
    & = \sum_{i=1}^\infty s_{\pm} (X_i) + \sum_{i=1}^\infty r_{\pm}(X_i)\Big(1-p_2+\sum_{j \neq i} X_j^2\Big) = \sum_{i=1}^\infty \left( s_{\pm}(X_i)+r_{\pm}(X_i)\sum_{j\neq i}X_j^2\right) + (1-p_2)\sum_{i=1}^\infty r_{\pm}(X_i)   \\
    & = \sigma_1+ \sigma_2 (1-p_2),
    \end{align*} 
    where $\sigma_1,\sigma_2 \in \SSigma$. This proves $p_2 \pm p_k \in T$. 
\end{proof}

Since $p_1$ is not bounded on the set $\{x \in \R^\infty ~|~ p_2(x)=1\}$, we cannot directly adapt the proof of Theorem \ref{thm: dimension-independent reznick} to the full ring $\Lambda$. The respective preordering $T=\SSigma + (1-p_2)$ would not be Archimedean. 

In Corollary \ref{cor:K_T-description with p_1}, we will show that the value of $p_1$ is free over $\{x \in \R^\infty ~|~ p_2(x)=1\}$, which allows us to use a cylinder positivstellensatz to prove in Theorem \ref{thm: dimension-independent reznick with p_1} a generalization including $p_1$.

\section{\texorpdfstring{The $S_\infty$-orbit space, $\mathcal{K}_T$}{KT} and discrete probability measures on \texorpdfstring{$[-1,1]$}{[-1,1]}}\label{sec: orbit space}
Throughout this section we consider the ring $A = \R[p_2,p_3,\ldots]$, the ideal $I = (1-p_2) \subset A$, the preordering $T = (\SSigma \cap A)+I$ in $A$, and the character space $\mathcal{K}_T = \left\{ \varphi \in \operatorname{Hom}(A,\R) ~\middle| ~ \varphi (T) \subset \R_{\geq 0}\right\}$ of all ring homomorphisms $A \to \R$ which are non-negative on $T$. 
We investigate the relation between discrete probability measures on the interval $[-1,1]$ and the evaluations of the map 
\[(p_2,p_3,\ldots) : \ell^2 \to \R^{\N}.\] 
This leads us to a characterization of $\mathcal{K}_T$.
Moreover, let $\S_\infty = \{ \vartheta : \N \to \N ~\mid~ \vartheta \text{ bijective}\}$ denote the \emph{large infinite symmetric group} of all permutations of $\N$. We can view the set $(p_2,p_3,\ldots)(\ell^2)$ as the orbit space of $\ell^2$ with respect to $\S_\infty$ as the following proposition demonstrates. 

\begin{proposition}\label{prop: orbit space}
    Let $\widetilde{x}$, $\widetilde{y} \in \ell^2$ with $(p_2,p_3,\ldots)(\widetilde{x})=(p_2,p_3,\ldots)(\widetilde{y})$. We denote the truncations of $\widetilde{x}$ and $\widetilde{y}$ where all $0$-coordinates are deleted by $x$ and $y$. Then $x$ and $y$ are in the same orbit of $\S_\infty$, after embedding $x \mapsto (x,0,0,\ldots)$ and $y \mapsto (y,0,0,\ldots)$.  
\end{proposition}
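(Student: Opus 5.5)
We need to show that the multiset of nonzero entries of $\widetilde{x}$ is determined by the power sums $p_k(\widetilde{x})$, $k\geq 2$. Here $x$ and $y$ are the (finite or infinite) sequences of nonzero entries, and both lie in $\ell^2$. The plan is to recover the entries one at a time, greedily in decreasing absolute value, from the asymptotics of $p_k$ as $k\to\infty$, and then build the permutation.

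\textbf{Step 1: the largest absolute value.} For $x\in\ell^2$ the nonzero entries accumulate only at $0$. Hence $M(x):=\max_i |x_i|$ exists when $x\neq 0$, and it is attained by only finitely many indices. We claim $M(x)=\lim_{k\to\infty} p_{2k}(x)^{1/(2k)}$. This is the usual $\ell^{2k}\to\ell^\infty$ norm limit. It is justified by $M^{2k}\leq p_{2k}(x)\leq M^{2k-2}p_2(x)$, and taking $2k$-th roots of both bounds gives the limit $M$. So $M(x)=M(y)$. The case $x=0$ is trivial, since then $p_2(x)=0$ forces $y=0$.

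\textbf{Step 2: the multiplicities of $M$ and $-M$.} Let $a$ be the number of entries equal to $M$ and $b$ the number equal to $-M$. Write $x=x'\sqcup x''$, where $x'$ consists of the entries $\pm M$ and $x''$ of the rest. Then $\sup_i|x''_i|<M$ strictly, because the entries accumulate only at $0$. Consequently
\[
p_k(x)/M^k=a+(-1)^k b+p_k(x'')/M^k,
\]
and the last term tends to $0$ as $k\to\infty$. To see this, bound $|p_k(x'')|\leq M''^{\,k-2}p_2(x'')$ with $M''<M$. Taking the limit along even $k$ and along odd $k$ gives $a+b$ and $a-b$, so $a$ and $b$ are determined. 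The same values arise for $y$, since its power sums agree. Subtracting the contributions $aM^k+b(-M)^k$ leaves $p_k(x'')=p_k(y'')$ for all $k\geq 2$, where $x''$ and $y''$ are again in $\ell^2$.

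\textbf{Step 3: iteration and the permutation.} Iterating Steps 1 and 2 produces a strictly decreasing sequence of absolute values $M_1>M_2>\dots$. For each $M_j$ it gives identical multiplicities of $M_j$ and $-M_j$ in $x$ and in $y$. Every nonzero entry of $x$ appears at some finite stage: the set $\{|x_i|\geq c\}$ is finite for each $c>0$, so a given entry is exhausted after finitely many steps. The process stops only when the remainder vanishes, that is, when its $p_2$ is zero. Hence $x$ and $y$ have the same countable multiset of nonzero entries, with finite multiplicities. Matching equal values index-by-index gives a bijection between the index sets of $x$ and $y$. These index sets are either both finite of the same size or both equal to $\N$. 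In the finite case the bijection extends to an element of $\S_\infty$ on the padded sequences by fixing all later indices. In the infinite case it already is a bijection $\N\to\N$.

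\textbf{Main obstacle.} The delicate point is Step 2, namely the uniform control $p_k(x'')/M^k\to 0$ for an infinite tail. This requires the strict gap $\sup|x''_i|<M$ and the bound through $p_2$; both are supplied by $x\in\ell^2$.
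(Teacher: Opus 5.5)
Your proof is correct, and it takes a different route from the paper's.

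The paper works with the generating function $Q_z(t)=\sum_{n\ge 2}p_n(z)t^{n-1}=\sum_i \frac{z_i^2 t}{1-z_i t}$. Using the Weierstrass M-test and $\sum z_i^2<\infty$, it shows this extends to a meromorphic function on $\C$ with poles exactly at the reciprocals $1/z_i$ of the nonzero entries. It then reads off the whole multiset at once: the poles give the values, and the residue at each pole is minus the multiplicity.

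Your peeling argument extracts the same information with real-variable tools:
\begin{itemize}
\item Step~1 is in effect the Cauchy--Hadamard computation of the radius of convergence $1/M$ of $Q$.
\item Step~2 recovers the principal parts at the two poles $\pm 1/M$ on the circle of convergence. Your even/odd split is what separates $M$ from $-M$; the residue argument gets this for free, since these are distinct poles.
\item Your bound $|p_k(x'')|\le M''^{\,k-2}p_2(x'')$ plays the role of the paper's M-test estimate $2R z_i^2$.
\end{itemize}

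The paper's route is shorter and identifies all atoms simultaneously. It does implicitly use the identity theorem to pass from equal Taylor coefficients at $0$ to equal meromorphic functions on all of $\C$.

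Your route avoids complex analysis entirely and is constructive: it recovers the entries in decreasing absolute value. The cost is an induction, and you correctly supply the completeness check it needs, namely that each value is reached after finitely many stages because $\{|x_i|\ge c\}$ is finite for every $c>0$. You also treat explicitly the finite versus infinite index-set cases when building the element of $\S_\infty$, which the paper leaves implicit.
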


\begin{proof}
    We conclude from the generating function identity
    \[ P_{\X}(t) = \sum_{n = 1}^\infty p_n t^{n-1} = \sum_{i=1}^\infty \frac{X_i}{1-X_it},\]
    which can be found in \cite[Page~23]{Macdonald}, that
    \[ Q_{\X}(t) = \sum_{n = 2}^\infty p_n t^{n-1} = \sum_{i=1}^\infty \frac{X_i}{1-X_it} - \sum_{i=1}^\infty X_i = \sum_{i =1}^\infty \frac{X_i^2t}{1-X_it}. \]
    We claim that for all $z \in \ell^2$ the function $Q_z(t)$ is meromorphic on $\C$. If $z \in \R^n$ for some $n \in \N$ the function $Q_z(t)$ is already a rational function and thus meromorphic on $\C$. So, suppose $z \in \ell^2$ with no $0$ coordinate. 
    Since $\sum_{i=1}^\infty z_i^2 < \infty$ we can conclude that $z_i \to 0$ for $i \to \infty$. In particular, we have $\frac{1}{|z_i|} \to \infty$ for $i \to \infty$. So, the sequence $\left(\frac{1}{z_i}\right)_{i \in \N}$ has no finite accumulation point and all potential poles $\frac{1}{z_i}$ of $Q_z(t)$ are isolated. Next, we prove uniform convergence on any compact set $K \subset \C$ which does not contain a point from the sequence $\left(\frac{1}{z_i}\right)_{i}$. Let $R > 0$ such that $|t| \leq R$ for all $t \in K$. There exists $N \in \N$ such that $|z_i| \leq \frac{1}{2R}$ for all $i \geq N$. The inequalities  
    \[ |1-z_it| \geq \left|1-\left|z_i\right|R\right| \geq \frac{1}{2} \quad \text{and} \quad \left| \frac{z_i^2t}{1-z_it} \right| \leq \frac{Rz_i^2}{\frac{1}{2}}=2Rz_i^2\]
    hold on $K$ for all $i \geq N$ and the sum $\sum_{i=1}^\infty 2Rz_i^2$ converges. It follows from the Weierstrass M-test that $\sum_{i=1}^n \frac{z_i^2t}{1-z_it}$ converges uniformly on $K$. So, $Q_z(t)$ is analytic on $\C \setminus \left\{ \frac{1}{z_i} ~\middle|~ i \in \N\right\}$ and therefore meromorphic.

    So, if $Q_x(t)=Q_y(t)$ then the functions have the same poles with the same residues. Since all poles are isolated, their residue at a pole $c$ is precisely the negative of the multiplicity of $\frac{1}{c}$ in $x$, respectively in $y$. This proves that indeed $x$ and $y$ are in the same $\S_\infty$-orbit.
\end{proof}

\begin{definition}
    We identify points $x=(x_1,x_2,\ldots) \in \ell^2$ with $\Vert x\Vert_2 =1$ with discrete probability measures $\mu_x := \sum_{i=1}^\infty x_i^2 \cdot \delta_{x_i}$ on the compact interval $[-1,1]$.
\end{definition}
Note that $\mu_x$ is indeed a discrete probability measure on $[-1,1]$, because $\Vert x\Vert_2=1$ implies $|x_i| \leq 1$ for all $i \in \N$ and its total mass is $\sum_{i=1}^\infty x_i^2 =p_2(x) =1$.

The following lemma identifies certain properties of measures $\mu_x$ for $x \in \ell^2$.
\begin{lemma}\label{lem: point evaluations and discrete measures}
    Let $x\in \ell^2$ with $\Vert x\Vert_2=1$ and let $\mu_x$ denote its associated discrete measure.
    Then the following assertions hold.
    \begin{enumerate}
        \item 
        The $j$-th moment $m_j:= \int t^j\mathrm{d}\mu_x(t)$ of $\mu_x$ equals $p_{j+2}(x)$.

        \item Every non-zero coordinate $x_i$ of $x$ is an isolated point in the sequence $x$.
        \item For a non-zero coordinate $x_i$ we have $\frac{\mu_x(x_i)}{x_i^2}\in \N$. In particular, the ratio $\frac{\mu_x(x_i)}{x_i^2}$ counts the multiplicity of $x_i$ in the sequence $x$.
    \end{enumerate}
\end{lemma}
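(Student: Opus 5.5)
The three assertions are elementary, and I would prove them in order, each directly from the definition $\mu_x=\sum_{i=1}^\infty x_i^2\delta_{x_i}$.

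For (1), I integrate $t^j$ against $\mu_x$ term by term. Since $\mu_x$ is a countable sum of point masses with summable weights $x_i^2$, and $t^j$ is bounded by $1$ on $[-1,1]$, monotone (or dominated) convergence gives
\[
\int t^j\,\mathrm{d}\mu_x(t)=\sum_{i=1}^\infty x_i^2\,x_i^j=\sum_{i=1}^\infty x_i^{j+2}=p_{j+2}(x).
\]
The series converges absolutely because $|x_i|^{j+2}\le x_i^2$, using $|x_i|\le 1$. The case $j=0$ recovers total mass $p_2(x)=1$.

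For (2), I use that $x\in\ell^2$ forces $x_i\to 0$. Given a non-zero value $c=x_i$, choose $N$ with $|x_n|<|c|/2$ for all $n\ge N$. Then only finitely many coordinates lie in the interval $(c-|c|/2,\,c+|c|/2)$, since that interval stays away from $0$. Removing those coordinates that differ from $c$ (finitely many, each at positive distance from $c$) gives a neighborhood of $c$ containing no other value of the sequence. Thus $c$ is isolated, and in particular $c$ occurs only finitely many times in $x$.

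For (3), let $c=x_i\neq 0$ and $M=\{n : x_n=c\}$, which is finite and non-empty by (2). Then
\[
\mu_x(c)=\sum_{n:\,x_n=c}x_n^2=|M|\,c^2,
\]
so $\mu_x(x_i)/x_i^2=|M|\in\N$, and this is exactly the multiplicity of $x_i$ in the sequence.

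There is no real obstacle here. The only points needing care are the finiteness of the multiplicity in (2), which is what makes (3) an honest natural number, and justifying the exchange of sum and integral in (1).
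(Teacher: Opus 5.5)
Your proof is correct and follows essentially the same route as the paper's. Part (1) is the same direct computation, with your added justification of exchanging sum and integral. For (2), the paper argues by contradiction that infinitely many coordinates within $\varepsilon$ of a non-zero value would make $\sum_i x_i^2$ diverge; this is the same square-summability fact you use via $x_i\to 0$, and part (3) then follows exactly as you wrote it.
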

\begin{proof}

\begin{enumerate}
    \item
    An elementary calculation shows 
    \[    m_j= \int t^j\mathrm{d}\mu_x(t)=\sum_{i=1}^\infty x_i^2x_i^j=p_{j+2}(x).\]
    \item Suppose that there exist $N \in \N$ with $x_N \neq 0$ and $\varepsilon > 0$ such that $r:=\min\{ |x_N \pm \varepsilon| \}> 0$ and $ \big\{ i \in \N ~\big|~ | x_N-x_i| < \varepsilon \big\}$ has infinite cardinality.
    Then \[1 = \sum_{i=1}^\infty x_i^2 \geq \sum_{\substack{i\in \N \\ |x_N-x_i| <\varepsilon}} x_i^2 \geq \sum_{\substack{i\in \N \\ |x_N-x_i| <\varepsilon}} r^2 = \infty \]
    which is a contradiction.
    \item It follows from (2) that each $x_i \neq 0$ is isolated. Thus, we find that $\frac{\mu_x(x_i)}{x_i^2}=\sum_{\substack{n \in \N \\ x_n = x_i}}1 $ is a natural number.
\end{enumerate}
\end{proof}

We need the following auxiliary result. 
\begin{lemma}\label{lem: e_2(..) in A}
Let $h \in \R[t]$ be a polynomial. Then the symmetric function \[e_2(X_1^2h(X_1)^2,X_2^2h(X_2)^2,\dots)\] is contained in $\SSigma \cap A$.   
\end{lemma}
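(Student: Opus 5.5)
Write $g(t) := t\,h(t)$, so that $X_i^2h(X_i)^2 = g(X_i)^2$. The symmetric function in question is then
\[ F := e_2\bigl(g(X_1)^2, g(X_2)^2, \dots\bigr) = \sum_{i<j} g(X_i)^2 g(X_j)^2 = \sum_{i<j} \bigl(g(X_i)g(X_j)\bigr)^2. \]
This is already a sum of squares of polynomials of degree at most $\deg F/2$. So $F$ is sos in the sense of the paper's definition. Alternatively, each truncation $F^{(n)} = \sum_{1\le i<j\le n} (g(X_i)g(X_j))^2$ is a symmetric sum of squares in $\SSigma_n$, so $F \in \SSigma$. Hence the sos part is immediate. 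The real content is showing that $F$ lies in $A = \R[p_2,p_3,\ldots]$, i.e.\ that $F$ is a symmetric function whose expression in power sums does not involve $p_1$.

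For this I would use the identity $e_2(y) = \tfrac12\bigl(p_1(y)^2 - p_2(y)\bigr)$ in the variables $y_i = g(X_i)^2$. This gives
\[ F = \tfrac12\Bigl( \bigl(\textstyle\sum_i g(X_i)^2\bigr)^2 - \textstyle\sum_i g(X_i)^4 \Bigr). \]
Now $g(t)^2 = t^2h(t)^2$ is a polynomial with no constant or linear term. Write $g(t)^2 = \sum_{k\ge 2} a_k t^k$. Then $\sum_i g(X_i)^2 = \sum_{k\ge2} a_k p_k$, a finite linear combination of $p_2,p_3,\dots$. Similarly $g(t)^4 = t^4h(t)^4$ has only terms of degree $\ge 4$, so $\sum_i g(X_i)^4 = \sum_{k\ge 4} b_k p_k \in A$. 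Since $A$ is a ring, $F \in A$.

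Combining the two paragraphs gives $F \in \SSigma \cap A$. The only step requiring care is that the series manipulations are legitimate. In $\R[[\X]]$ the sums $\sum_i X_i^k$ for $k\ge 1$ are well defined, $F$ has bounded degree, and the identity $e_2 = \frac12(p_1^2-p_2)$ holds as formal power series. This follows by comparing coefficients, or via truncations using the fact that truncation is a ring homomorphism. I expect no real obstacle; the key observation is that the factor $X_i^2$ kills the constant and linear terms, so $p_1$ never appears.
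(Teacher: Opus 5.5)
Your proposal is correct and follows essentially the same approach as the paper. The paper also reads off the sos property directly from $\sum_{i<j}(X_iX_jh(X_i)h(X_j))^2$, and then uses $2e_2=p_1^2-p_2$ to write the function as $\bigl(\sum_i a_i p_{2+i}\bigr)^2-\sum_i b_i p_{4+i}\in A$. The key point in both arguments is that the factor $X_i^2$ removes the constant and linear terms, so $p_1$ never appears.
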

\begin{proof}
Let $h(t)^2 = \sum_{i=0}^{2d} a_it^i$ and $h(t)^4 = \sum_{i=0}^{4d} b_it^i$. We verify that $2e_2(X_1^2h(X_1)^2,X_2^2h(X_2)^2,\dots) \in \SSigma \cap A$.
Note that
\[ 2e_2(X_1^2h(X_1)^2,X_2^2h(X_2)^2,\dots) = \sum_{i < j} 2(X_iX_jh(X_i)h(X_j))^2 \in \SSigma.\]
We use the base change $2e_2 = p_1^2-p_2$ to verify the containment in $A$. 
\begin{align*}
     2e_2(X_1^2h(X_1)^2,X_2^2h(X_2)^2,\dots) & = p_1(X_1^2h(X_1)^2,X_2^2h(X_2)^2,\dots)^2-p_2(X_1^2h(X_1)^2,X_2^2h(X_2)^2,\dots) \\
     & = \left( \sum_{i=1}^\infty X_i^2h(X_i)^2 \right)^2 -  \sum_{i=1}^\infty X_i^4h(X_i)^4 = \left( \sum_{i=0}^{2d} a_i p_{2+i} \right)^2- \sum_{i=0}^{4d} b_i p_{4+i} \in A.
\end{align*}
\end{proof}
The following auxiliary lemma can be found in \cite[Chapter~7]{Stanley} and \cite[Chapter~I.2]{Macdonald}. 
In the language of representation theory it says that the specialization of the basis transformation map $p_k \mapsto e_k$ at $p_i=t$ for all $i$ evaluates to the falling factorial $(t)_k$ divided by $k!$. A proof usually exploits the exponential identity linking the elementary symmetric polynomials to the power sums.
For completeness, we present a proof using Newton's identities. 

\begin{lemma}\label{lem: Luca's formula}
Let $k \geq 1$ be a positive integer and write $e_k = g(p_1,\ldots,p_k)$ for some $k$-variate polynomial $g$. Then $g(t,\ldots,t)=\frac{1}{k!}\prod_{i=0}^{k-1}(t-i)$.    
\end{lemma}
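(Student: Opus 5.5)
We argue by induction on $k$, using Newton's identities
\[
k e_k=\sum_{i=1}^{k}(-1)^{i-1}e_{k-i}\,p_i ,\qquad e_0=1,
\]
which hold in $\Lambda$ (see \cite[Chapter~I.2]{Macdonald}). Write $e_j=g_j(p_1,\ldots,p_j)$ with $g_0=1$. Since $p_1,p_2,\ldots$ are algebraically independent, these polynomials are uniquely determined, and Newton's identities give the polynomial recursion
\[
k\,g_k(Z_1,\ldots,Z_k)=\sum_{i=1}^k(-1)^{i-1}g_{k-i}(Z_1,\ldots,Z_{k-i})\,Z_i .
\]
We specialise $Z_1=\dots=Z_k=t$. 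Setting $c_j(t):=g_j(t,\ldots,t)$, the recursion becomes
\[
k\,c_k(t)=t\sum_{j=0}^{k-1}(-1)^{k-1-j}c_j(t).
\]
The claim is then $c_j(t)=\binom{t}{j}:=\frac{1}{j!}\prod_{i=0}^{j-1}(t-i)$, and the base case $c_0=1$ holds trivially.

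For the inductive step, assume $c_j=\binom{t}{j}$ for all $j<k$. We need the identity
\[
\sum_{j=0}^{k-1}(-1)^{k-1-j}\binom{t}{j}=\binom{t-1}{k-1}.
\]
This is the standard alternating partial-sum identity for binomial coefficients, and it is valid as a polynomial identity in $t$. We prove it by a short induction on $k$ using Pascal's rule $\binom{t}{j}=\binom{t-1}{j}+\binom{t-1}{j-1}$, under which the sum telescopes. Alternatively, both sides are polynomials in $t$ that agree for all sufficiently large integers $t$, so they agree identically.

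Given the identity, we get
\[
k\,c_k(t)=t\binom{t-1}{k-1}=k\binom{t}{k},
\]
which gives $c_k=\frac{1}{k!}\prod_{i=0}^{k-1}(t-i)$ and completes the induction.

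The only step that needs care is the binomial identity. If a direct argument is preferred, we can instead check the claim at positive integers $t=n$: set $n$ variables equal to $1$ and the rest to $0$. Then $p_i=n$ for all $i$ and $e_k=\binom{n}{k}$. Both $c_k(t)$ and $\binom{t}{k}$ are polynomials in $t$ agreeing at every $n\in\N$, so they coincide. This evaluation argument avoids the combinatorial identity entirely and may be the cleaner route.
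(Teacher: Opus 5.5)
Your proof is correct, and it argues differently from the paper, in one route mildly and in the other genuinely.

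\textbf{The paper's argument.} It uses the determinantal form of Newton's identities, $e_k=\frac{1}{k!}\det(\cdot)$, and substitutes $p_i=t$. Elementary operations then reduce the matrix to an upper-triangular one with diagonal $t,t-1,\ldots,t-(k-1)$. The product $\prod_{i=0}^{k-1}(t-i)$ appears in one step, with no induction and no binomial identity.

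\textbf{Your first route.} It works with the recursive form of the same identities, so the underlying content is equivalent: the determinantal formula is just Cramer's rule for that triangular system. The cost is a strong induction plus the alternating partial-sum identity $\sum_{j=0}^{k-1}(-1)^{k-1-j}\binom{t}{j}=\binom{t-1}{k-1}$. The paper's triangularization performs that telescoping implicitly.

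\textbf{Your evaluation route.} This one is genuinely different and the shortest of the three. Since $g$ is unique by algebraic independence of the $p_i$, and truncation $\Lambda\to\R[X_1,\ldots,X_n]$ is a ring homomorphism, evaluating at $(1,\ldots,1)\in\R^n$ gives $g(n,\ldots,n)=e_k^{(n)}(1,\ldots,1)=\binom{n}{k}$ for every $n\in\N$. Two polynomials in $t$ that agree on $\N$ coincide. This route needs no form of Newton's identities at all, and it explains why the answer is $\binom{t}{k}$: $t$ plays the role of the number of variables.

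\textbf{What each approach buys.} The paper's argument is a self-contained symbolic computation in $\R[t]$ that never interpolates from integer points. Your evaluation argument trades that computation for a conceptual identity-of-polynomials step.
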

\begin{proof}
Newton's identities characterize the polynomial $g$:
\[ e_k = \frac{1}{k!} \det \begin{pmatrix} p_1 & 1 & 0 & 0 & \cdots & 0 \\ p_2 & p_1 & 2 & 0 & \cdots & 0 \\ p_3 & p_2 & p_1 & 3 & \ddots & \vdots \\ \vdots & \vdots & \vdots & \ddots & \ddots & 0 \\ p_{k-1}& p_{k-2}& p_{k-3}& \cdots & p_1 & k-1 \\ p_k & p_{k-1}& p_{k-2}& \cdots & p_2 & p_1 \end{pmatrix}.\]
Let $A \in \R[t]^{k \times k}$ denote the matrix arising from replacing each $p_i$ by $t$. We apply elementary column operations and replace each column $C_i$ by $C_i-C_{i-1}$ for $2 \leq i \leq k$. The resulting upper-triangular matrix
\[ B = \begin{pmatrix} t & 1 & 0 & 0 & \cdots & 0 \\ 0 & t-1 & 2 & 0 & \cdots & 0 \\ 0 & 0 & t-2 & 3 & \ddots & \vdots \\ \vdots & \vdots & \ddots & \ddots & \ddots & 0 \\ 0 & 0 & \cdots & 0 & t-(k-2) & k-1 \\ 0 & 0 & \cdots & 0 & 0 & t-(k-1) \end{pmatrix} \]
has the same determinant as $A$. 
The claim follows now directly.
\end{proof}

The next lemma shows that every $\varphi \in \mathcal{K}_T$ is the limit of measures $\mu_x$ with $x \in \ell^2$ and $\Vert x \Vert_2=1$.
\begin{lemma} \label{lem: K_T classification}
Let $T = (\SSigma \cap A) + I$, $\varphi\in \mathcal{K}_T$, and define 
$(m_j)_{j \in \N_0}:=\left(\varphi(p_{j+2})\right)_{j\in\N_0}.$ Then the following assertions hold.
\begin{enumerate}
    \item The sequence $(m_j)$ is the moment sequence of a probability measure on $[-1,1]$.
    \item If $(m_j)$ is the moment sequence of a measure $\mu$ on $[-1,1]$ and $h\in\R[t]$ is a polynomial with no linear or constant term, then
    \[\varphi\left(\sum_{i=1}^\infty h(X_i)\right)= \int \frac{h(t)}{t^2} \mathrm{d}\mu(t).\]
    \item If $(m_j)$ is the moment sequence of a probability measure $\mu$ on $[-1,1]$, then $\mu$ is a discrete measure.
    \item
    If $(m_j)$ is the moment sequence of a discrete probability measure $\mu$ on $[-1,1]$, then every $x\in [-1,1]$ with $x\neq 0$ and $\mu(x)> 0$ is an isolated point in the support of $\mu$.
    \item If $(m_j)$ is the moment sequence of a discrete probability measure $\mu$ on $[-1,1]$, then for every $x\in [-1,1]$ with $x\neq 0$ and $\mu(x)\neq 0$ we have $\frac{\mu({x})}{x^2}\in \N$. 
    \item If $\mu = c \cdot \delta_0 + \sum_{i \geq 1} N_i x_i^2 \cdot \delta_{x_i}$ is a discrete probability measure on $[-1,1]$ with $0 \neq x_i \neq x_j$ for all $i \neq j$, then its moment sequence $(m_j)_{j \in \N_0}$ lies in the closure (with respect to the product topology on $\R^\N$) of the set \[\left\{ (p_{2+j}(y))_{j \in \N_0} ~ \middle| ~ y \in \R^\infty, p_2(y)=1 \right\}.\] 
\end{enumerate}
\end{lemma}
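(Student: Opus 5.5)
Parts (1)–(2) follow from positivity of $\varphi$ on suitable elements of $T$, combined with the compactness of $[-1,1]$. Parts (3)–(5) follow from positivity on the elements produced by Lemma~\ref{lem: e_2(..) in A}. Part (6) is an explicit approximation argument.

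For (1), let $q\in\R[t]$ be a polynomial with $q\ge 0$ on $[-1,1]$. As in the proof of Lemma~\ref{lem: T is Archimedean}, the Markov--Lukács theorem gives $t^2 q(t)=s(t)+r(t)(1-t^2)$ with $s,r$ sos. The same computation as there shows that $\sum_i X_i^2q(X_i)\in T$, so $\varphi$ of it, namely $\sum_j q_j m_j$, is non-negative. The case $q=1$ gives $m_0=\varphi(p_2)=1$, since $\varphi(1-p_2)=0$. Thus $L(q)=\sum_j q_jm_j$ is a positive functional on polynomials that are non-negative on $[-1,1]$. By Haviland's theorem (or Hausdorff's), it is represented by a probability measure $\mu$ on $[-1,1]$, and the measure is unique by compactness.

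For (2), write $h(t)=t^2 g(t)$. Then $\sum_i h(X_i)=\sum_j g_j p_{j+2}$, and applying $\varphi$ gives $\sum_j g_j m_j=\int g\,d\mu$, which is the claimed formula by linearity.

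For (3)–(5), apply $\varphi$ to Lemma~\ref{lem: e_2(..) in A}. Write $F=\sum_i X_i^2h(X_i)^2$ and $G=\sum_i X_i^4h(X_i)^4$, so that $2e_2(\dots)=F^2-G\in T$. Using (2) and multiplicativity of $\varphi$, this gives
\[\Big(\int h^2\,d\mu\Big)^2 \ \ge\ \int t^2h(t)^4\,d\mu(t)\quad\text{for all } h\in\R[t].\]
More generally, I would use the elements $e_k(X_1^2h(X_1)^2,\dots)$. These are sums of squares of products, lie in $A$ by Newton's identities, and are evaluated via Lemma~\ref{lem: Luca's formula}. Next, approximate by polynomials an indicator-like function: take $h^2\approx \mathbf 1_{U}/a^2$ near a point $a\ne0$, which is possible by density in $L^2(\mu)$ on compact sets. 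Set $u=\mu(U)/a^2$, a pseudo-count of points near $a$. The $e_2$ inequality becomes roughly $u^2\ge u$ for the mass near $a$, and the $e_k$ inequalities give $u(u-1)\cdots(u-k+1)\ge 0$ for all $k$. This forces $u\in\N_0$ in the limit, and is where Lemma~\ref{lem: Luca's formula} enters. Shrinking $U$ around $a$ shows that $\mu$ restricted to $[-1,1]\setminus(-\delta,\delta)$ must have masses quantized at scale $\ge\delta^2$. Hence it has finitely many atoms there and no continuous part, which gives (3) and isolatedness (4) away from $0$. Applying the same argument at a single atom gives $\mu(x)/x^2\in\N$, which is (5). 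I expect this step to be the main obstacle: making the approximation precise requires care in choosing polynomials $h_n$ with $h_n^2\to \mathbf 1_{U}/t^2$ boundedly on $[-1,1]\setminus(-\delta,\delta)$, while controlling the contribution near $0$. Taking $h$ to vanish to high order at $0$ handles that contribution.

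For (6), take finitely many atoms $x_1,\dots,x_M$ with multiplicities $N_i$, listed as a vector $y'$. Append $n$ copies of $\pm\sqrt{c'/(2n)}$, half with each sign, where $c'=1-\sum_{i\le M}N_ix_i^2$; this weight $c'$ tends to $c$ as $M\to\infty$. The resulting $y\in\R^\infty$ has $p_2(y)=1$. For every $j\ge1$, the appended block contributes to $p_{j+2}(y)$ an amount of size $O(n^{-j/2})\to0$. Letting $n\to\infty$ first and then $M\to\infty$, the tails $\sum_{i>M}N_ix_i^{2+j}$ vanish. This gives coordinatewise convergence of $(p_{2+j}(y))_j$ to $(m_j)_j$, i.e.\ convergence in the product topology.
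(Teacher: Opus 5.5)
Your proposal is correct. Parts (1), (2) and (6) are essentially the paper's arguments, with small differences. In (1) you fold the paper's two cases ($\sigma_1$ and $(1-t^2)\sigma_2$) into the single certificate $t^2q=s+r(1-t^2)$ already used in Lemma~\ref{lem: T is Archimedean}. In (6) you build $y\in\R^\infty$ with $p_2(y)=1$ directly, instead of going through $\ell^2$ and density. Check the count there: if $n$ is the total number of appended entries $\pm\sqrt{c'/(2n)}$, they contribute only $c'/2$ to $p_2$. Take $2n$ of them, or simply $n$ copies of $\sqrt{c'/n}$.

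The genuine difference is in (3)--(5). The paper gives three separate local contradictions:
\begin{enumerate}
\item[(3)] it takes $h\approx\mathbf 1_{I_\delta}$ around a non-atomic support point and uses $e_2$ to compare $w^2$ with $\kappa w$;
\item[(4)] it places a tent of height $1/|y|$ on a nearby atom of small weight and uses $e_2$ again;
\item[(5)] it uses the isolatedness from (4) to put a tent on a single atom, and applies $e_k$ together with Lemma~\ref{lem: Luca's formula}.
\end{enumerate}
Your weight $h_n^2\to\mathbf 1_U/t^2$ is used boundedly and pointwise, with $U$ an open interval and $0\notin\overline U$. It makes every integral $\int t^{2m-2}h_n^{2m}\,d\mu$ converge to the same number $\nu(U):=\int_U t^{-2}\,d\mu$. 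By dominated convergence, positivity of $\varphi$ on $e_k(X_1^2h_n(X_1)^2,\dots)$ then gives exactly $\binom{\nu(U)}{k}\ge 0$ for all $k$, hence $\nu(U)\in\N_0$. So away from the origin, $t^{-2}\mu$ is an integer-valued measure with mass at most $\delta^{-2}$ on $\{|t|\ge\delta\}$. It is therefore a finite sum of integer multiples of Dirac masses there, and (3), (4), (5) follow simultaneously.

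This is more conceptual than the paper's route. It exhibits $t^{-2}\mu$ as the counting measure of the coordinates, exactly as for $\mu_x=\sum x_i^2\delta_{x_i}$, and it removes the dependence of (5) on (4). The paper's route, in exchange, uses only explicit piecewise-linear test functions and $\varepsilon$-bookkeeping rather than a limiting argument.

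When you write it up, commit to the weight $1/t^2$ from the start. With $h^2\approx\mathbf 1_U/a^2$ the identities hold only approximately. The limit $U\downarrow\{a\}$ then yields just $\mu(\{a\})/a^2\in\N_0$, which by itself does not exclude a continuous part; for that you would need the fixed-$U$ $e_2$ inequality, as in the paper's (3). Also, $L^2(\mu)$-density is not the right mode of approximation. The term $\int t^2h^4\,d\mu$ and the higher ones require bounded pointwise (or $L^{2k}$) convergence, as you indicate at the end.
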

\begin{proof}
    \begin{enumerate}
        \item We assume that $\left(m_{j}\right)_{j\in\N_0}$
    is not the moment sequence of a probability measure on $[-1,1]$ and claim that there exists $f\in T$ such that $\varphi(f)<0$. We define the linear function
    \[L_m: \R[t]\to \R, t^j\mapsto m_j.\]
    By the Riesz-Haviland theorem \cite{haviland1936momentum,riesz1923probleme}, there exists $g\in \R[t]$ such that $g([-1,1])\subseteq \R_{\geq 0}$ and $L_m(g)<0$. It follows from the Markov-Lukács theorem that we can write
    \[g=\sigma_1 + (1-t^2)\sigma_2\]
    for some sos $\sigma_1,\sigma_2 \in \R[t]$.\\
    \underline{Case 1:} There is $h\in \R[t]$ such that $L_m(h^2)<0$. Write $h(t)^2 = \sum_{k=0}^{2d}a_kt^k$ and define \[f:=\sum_{i=1}^\infty X_i^2h(X_i)^2 =\sum_{k=0}^{2d}a_kp_{k+2} \in A.\] Then
    \[\varphi(f)=\sum_{k=0}^{2d} a_k m_k = L_m(h^2)<0\]
    and by construction $f \in A \cap \SSigma \subset T$.
    \\
    \underline{Case 2:} There is $h\in \R[t]$ such that $L_m((1-t^2)h^2)<0$. 
    Write $h(t)^2 = \sum_{k=0}^{2d}a_kt^k$ and define
    \[f := \sum_{i=1}^\infty X_i^2(1-X_i^2)h(X_i)^2 = \sum_{k=0}^{2d}a_k(p_{k+2}-p_{k+4}) \in A. \]
    Then \[
    \varphi (f) = \sum_{k=0}^{2d}a_k(m_k-m_{k+2})=L_m((1-t^2)h^2) < 0.
    \]
   Moreover, the following identities
\begin{align*}
    f &= \sum_{i=1}^\infty X_i^2h(X_i)^2 - \sum_{i=1}^\infty X_i^4 h(X_i)^2 = (1-p_2) \cdot\sum_{i=1}^\infty X_i^2h(X_i)^2 + p_2 \cdot \sum_{i=1}^\infty X_i^2h(X_i)^2-\sum_{i=1}^\infty X_i^4 h(X_i)^2 \\
    & = (1-p_2) \cdot\sum_{i=1}^\infty X_i^2h(X_i)^2 +\sum_{i,j=1}^\infty X_j^2 X_i^2h(X_i)^2-\sum_{i=1}^\infty X_i^4 h(X_i)^2 \\
    &= (1-p_2) \cdot\sum_{i=1}^\infty X_i^2h(X_i)^2+\sum_{i < j }^\infty X_i^2X_j^2(h(X_i)^2+h(X_j)^2)
\end{align*}
demonstrate that $f \in I + \SSigma = T$ holds.

\item We write $h(t)=\sum_{j=0}^d a_jt^{j+2}$ and compute
    \begin{align*}
    \varphi\left(\sum_{i=1}^\infty h(X_i)\right)&=\varphi\left(\sum_{i=1}^\infty \sum_{j=0}^d a_j X_i^{j+2}\right)=\varphi\left(\sum_{j=0}^d a_j \sum_{i=1}^\infty X_i^{j+2}\right)=\sum_{j=0}^d a_j \varphi(p_{j+2}) \\
    &=\sum_{j=0}^d a_j m_j=\sum_{j=0}^d a_j \int t^j \mathrm{d} \mu(t) = \int \sum_{j=0}^d a_j t^j \mathrm{d} \mu(t)=\int \frac{h(t)}{t^2} \mathrm{d}\mu(t).
    \end{align*}
    \item 
    Suppose that $\mu$ is not a discrete measure on $[-1,1]$. We will show that this contradicts $\varphi \in \mathcal{K}_T$.
    Because $\mu$ is not a discrete measure, its continuous component is not identically zero. Moreover, any finite measure contains at most countably many atoms, i.e., points with positive mass. Therefore, there exists a non-atomic point $x_0 \in (-1,1)\setminus\{0\}$ that lies in the support of the continuous part of $\mu$. 

    For any $\delta > 0$ we consider the open neighborhood $I_\delta := (x_0-\delta,x_0+\delta) \cap [-1,1]$ of $x_0$. Since $x_0$ is in the support of the continuous component of $\mu$ we have $\mu(I_\delta) > 0$. Because $\mu(x_0)=0$ the continuity of the measure from above implies $\mu(I_\delta) \to 0$ for $\delta \downarrow 0$.
    Let $\kappa_\delta := \inf_{x \in I_\delta}x^2$. Since $x_0 \neq 0$ we have $\kappa_\delta \to x_0^2>0$ for $\delta \downarrow 0$.
    Because the set of atoms of $\mu$ is at most countable, we can restrict in our analysis to parameters $\delta$ such that $x_0-\delta$ and $x_0+\delta$ are not atoms of $\mu$. 
    By construction, we obtain the inequality 
    \begin{align}
    0 < w_\delta := \mu(I_\delta) < \kappa_\delta \label{eq:inequality w kappa}   
    \end{align}
    for sufficiently small $\delta$. This implies $w_\delta^2-w_\delta\kappa_\delta < 0$ for sufficiently small values of $\delta$. Then the quadratic polynomial $E_\delta(\varepsilon) = \varepsilon^2+2w_\delta \varepsilon+(w_\delta^2-\kappa_\delta w_\delta)$ satisfies $E_\delta(0)< 0$.
    By continuity there exists $\varepsilon_\delta > 0$ (which depends on $\delta$) such that $E_\delta(\varepsilon_\delta) < 0$.
    
    We now fix a $\delta > 0$ for which (\ref{eq:inequality w kappa}) holds and for which $0 \not \in \operatorname{cl} I_\delta$. We also fix $\varepsilon:=\varepsilon_\delta> 0$ such that $E_\delta(\varepsilon)<0$. Let $a < b$ be the endpoints of the closure of the interval $I_\delta$, i.e., $I_\delta=(a,b)$. By assumption $\mu(a)=\mu(b)=0$ which implies \[w:=w_\delta=\mu((a,b))=\mu([a,b]) < \kappa_\delta=:\kappa.\] 
    Continuity of the measure from above implies that there exists $\eta > 0$ such that $(a-\eta,b+\eta)\subset [-1,1]$ and $\mu((a-\eta,b+\eta))< w+\varepsilon$. 

    We consider the continuous, piecewise-linear function
    \[ f : [-1,1] \to \R, x \mapsto  \begin{cases} 
1 & \text{if } x \in (a,b) \\ 
1 - \frac{a-x}{\eta} & \text{if } x \in (a-\eta, a] \\
1 - \frac{x-b}{\eta} & \text{if } x \in [b, b+\eta) \\
0 & \text{otherwise.}
\end{cases} \]
By construction, $0 \leq f(x) \leq 1$ for all $x \in [-1,1]$. Moreover, we obtain 
\begin{align*}
    &\int f(t)^2 \mathrm{d} \mu(t) = \mu((a,b)) + \int_{(a-\eta,b+\eta) \setminus (a,b)} f(t)^2  \mathrm{d} \mu(t) = w + R_1, \\
    &\int t^2 f(t)^4 \mathrm{d} \mu(t) = \int_{(a,b)} t^2 \cdot 1^4 \mathrm{d}\mu(t) + \int_{(a-\eta,b+\eta)\setminus (a,b)} t^2 f(t)^4 \mathrm{d} \mu(t)   \geq \kappa \cdot w.
\end{align*}
We can bound $R_1$ as follows
\[ 0 \leq R_1 \leq \mu((a-\eta,b+\eta)\setminus (a,b)) < \varepsilon.\]
This implies
\begin{align}\label{eq:discrete}
\left( \int f(t)^2 \mathrm{d} \mu (t) \right)^2 - \int t^2 f(t)^4 \mathrm{d} \mu (t) \leq (w+\varepsilon)^2-\kappa \cdot w = w^2+w(2\varepsilon-\kappa)+\varepsilon^2 =E_\delta(\varepsilon)<0.    
\end{align}
It follows from the Weierstrass approximation theorem that $f$ can be uniformly approximated on the compact interval $[-1, 1]$ by a polynomial. Thus, there exists a polynomial $h \in \R[t]$ for which the inequality (\ref{eq:discrete}) is preserved, i.e.,
\[ \left( \int h(t)^2 \mathrm{d} \mu (t) \right)^2 - \int t^2 h(t)^4 \mathrm{d} \mu (t) < 0.  \]
Now we evaluate $\varphi$ on the symmetric function $e_2(X_1^2h(X_1)^2,X_2^2h(X_2)^2,\ldots)$ which is contained in $T$ by Lemma \ref{lem: e_2(..) in A}. We obtain
    \begin{align*}
    2\varphi(e_2(X_1^2 h(X_1)^2,X_2^2 h(X_2)^2,\ldots))
    &=\varphi\left( \left(\sum X_i^2 h(X_i)^2\right)^2-\sum (X_i^2 h(X_i)^2)^2\right) \\
    &=\varphi \left(\sum X_i^2 h(X_i)^2\right)^2-\varphi\left(\sum (X_i^2 h(X_i)^2)^2\right)     \\
    &=\left(\int h(t)^2 \mathrm{d} \mu(t) \right)^2- \int t^2h(t)^4 \mathrm{d} \mu(t)  < 0 
    \end{align*}
    which is a contradiction to $\varphi \in \mathcal{K}_T$.
    \item
    Write $\mu=\sum_{i=1}^\infty a_i \delta_{x_i}$ with $x_i \in [-1,1]$ and $a_i \geq 0$ with $\sum_{i=1}^\infty a_i =1$. 
    If only finitely many weights are positive, we obtain $\mu = \sum_{i=1}^n a_i \delta_{x_i}$ for some finite $n$ and the statement follows directly. So, we can suppose that all weights are positive and all $x_i$ are pairwise distinct. 
    We suppose that the support of $\mu$ has an accumulation point that is not equal to $0$. Without loss of generality we assume that $x_1\neq 0$ is not isolated. 
    So there is a subsequence $(x_{n_k})_k$ of $(x_n)_n$ which converges to $x_1$. Since the sum of the weights is $1$, the subsequence of weights $(a_{n_k})_k$ must converge to zero.
    Because $x_{n_k} \to x_1$ and $a_{n_k} \to 0$ for $k \to \infty$, we find that $\frac{a_{n_k}}{x_{n_k}^2} \to 0$ for $k \to \infty$. In particular, there exists a sufficiently large $K$ with $0<\frac{a_{n_K}}{x_{n_K}^2}<1$. We write $y:=x_{n_K}, a:=a_{n_K}$ and $w:=\frac{a}{y^2} \in (0,1)$. Then $w^2-w < 0$ holds. 
Consider the quadratic polynomial $E(\varepsilon)=w^2+w\left(2\frac{\varepsilon}{y^2}-1\right)+\frac{\varepsilon^2}{y^4}$ with $E(0)=w^2-w < 0$. 
So there exists a sufficiently small $\varepsilon > 0$ with $E(\varepsilon) < 0$.
Because $\mu$ is a finite measure there exists $\rho > 0$ such that $0 \not \in (y-\rho,y+\rho)\subset[-1,1]$ and $\mu \left( (y-\rho,y+\rho)\setminus \{y\} \right) < \varepsilon$.   
    We consider the continuous function 
    \[ f : [-1,1] \to \R, x \mapsto \begin{cases} \frac{1}{|y|} \left( 1 -\frac{|x-y|}{\rho} \right) & \text{ if } |x-y| < \rho \\ 0 & \text{ if } |x-y| \geq \rho 
\end{cases}\]
which satisfies $f(y)=\frac{1}{|y|}, 0 \leq f  \leq \frac{1}{|y|}$ and $f(x)=0$ for all $x \not \in (y-\rho,y+\rho)$.
In particular, $f$ is a piecewise-linear function. We calculate
\begin{align*}
    &\int f(t)^2 \mathrm{d} \mu (t) = \frac{a}{y^2} + \int_{(y-\rho,y+\rho)\setminus \{y\}}f(t)^2 \mathrm{d} \mu(t) = w + R_1, \\
    & \int t^2 f(t)^4 \mathrm{d} \mu (t) = \frac{a}{y^2}+ \int_{(y-\rho,y+\rho)\setminus \{y\}}t^2f(t)^4 \mathrm{d} \mu(t) \geq w,
\end{align*}
where the term $R_1$ can be bounded as follows
\[ 0 \leq R_1 \leq \frac{\varepsilon}{y^2}. \]
In particular, we have
\begin{align*}
 \left( \int f(t)^2 \mathrm{d} \mu (t) \right)^2 - \int t^2 f(t)^4 \mathrm{d} \mu (t) \leq (w+R_1)^2-w  \leq w^2+w\left(2\frac{\varepsilon}{y^2}-1\right)+\frac{\varepsilon^2}{y^4} = E(\varepsilon) < 0
\end{align*}
By the Weierstrass approximation theorem, we can uniformly approximate $f$ by a polynomial $h \in \R[t]$ and preserve this inequality. Evaluating $\varphi \in \mathcal{K}_T$ on the symmetric sum of squares $e_2(X_1^2 h(X_1)^2,\ldots) \in A$ (see Lemma \ref{lem: e_2(..) in A}) yields a negative evaluation which contradicts our assumption.

    \item Write $\mu = \sum_{i=1}^\infty a_i \delta_{x_i}$ with $a_i > 0$ for all $i \in \N$ and with moment sequence $(m_j)$. Suppose that $0 \neq x \in [-1,1]$ with $N_x := \frac{\mu(x)}{x^2} \not\in \N$. Let $k \geq 2$ be the integer with $N_x\in (k-2,k-1)$. Observe that $\mu(x) > 0$, so $x$ is an isolated point in the support of $\mu$ by (4). 
    Write $e_k=g(p_1,\ldots,p_k)$ for some polynomial $g \in \R[Z_1,\ldots,Z_k]$. By Lemma \ref{lem: Luca's formula} we have $g(t,\ldots,t)=\frac{1}{k!}\prod_{i=0}^{k-1}(t-i)$ which implies $g(N_x,\ldots,N_x)<0$. Because $g$ is continuous, there exists $\varepsilon > 0$ such that $g(z_1,\ldots,z_k) < 0$ for all $(z_1,\ldots,z_k) \in \R^k$ with $|z_i-N_x| < \varepsilon$ for all $1 \leq i \leq k$.
    Since $x$ is an isolated point in the support of $\mu$, there exists $\rho > 0$ with $\operatorname{supp}(\mu) \cap (x-\rho,x+\rho)=\{x\}$. We define the continuous function
    \[ f : [-1,1] \to \R, t \mapsto \max\left\{0,\frac{1}{|x|}\cdot \left(1-\frac{|t-x|}{\rho} \right)\right\}.\]
    Thus, $f$ is piecewise-linear and equal to $0$ outside the interval $(x-\rho,x+\rho)$. For every integer $m \geq 1$ we obtain
    \[ \int t^{2m-2}f(t)^{2m} \mathrm{d} \mu (t) = x^{2m-2}f(x)^{2m}\mu(x)=\frac{x^{2m-2}}{x^{2m}}\mu(x) = \frac{\mu(x)}{x^2} = N_x. \]
    By the Weierstrass approximation theorem, there exists a polynomial $h \in \R[t]$ such that
    \[ \left| \int t^{2m-2}h(t)^{2m} \mathrm{d} \mu(t) - N_x \right| < \varepsilon  \qquad \text{for all } 1 \leq m \leq k.\]
    For $1 \leq m \leq k$, we define the symmetric function $p_{m,h}:=p_m(X_1^2h(X_1)^2,\ldots) \in A \cap \SSigma \subset T$ for which we obtain by (2)
    \begin{align*}
        \varphi (p_{m,h})  &= \varphi \left(\sum_{i=1}^\infty X_i^{2m}h(X_i)^{2m} \right) = \int t^{2m-2}h(t)^{2m} \mathrm{d} \mu (t). 
    \end{align*}
    In particular we have $|\varphi (p_{m,h})-N_x| < \varepsilon$ for all $1 \leq m \leq k$. 
    So we have
    \begin{align*}
    \varphi \left( e_k\left(X_1^2h\left(X_1\right)^2,\dots \right)\right) & = \varphi \left(g(p_{1,h},\cdots,p_{k,h})\right) = g\left(\varphi(p_{1,h}),\dots,\varphi (p_{k,h})\right) < 0
    \end{align*}
    which is a contradiction to $\varphi \in \mathcal{K}_T$.
    \item For $n \geq 1$, consider the sequence 
    \[y^{(n)} = \Bigg( \underbrace{\frac{\sqrt{c}}{\sqrt{n}},\ldots,\frac{\sqrt{c}}{\sqrt{n}}}_{\# = n},\underbrace{x_1,\ldots,x_1}_{\# = N_1},\underbrace{x_2,\ldots,x_2}_{\# = N_2},\ldots\Bigg) \in \ell^2 \]
    for which the associated sequence of measures $\mu_{y^{(n)}}$ converges pointwise to $\mu$.
    Because $\R^\infty$ is dense in $\ell^2$ the statement follows.
    \end{enumerate}
\end{proof}

The following result is an immediate consequence of Lemma \ref{lem: K_T classification}. We state it explicitly again, because of its importance in our proof of Theorem \ref{thm: dimension-independent reznick} and we think that it could be of independent interest.
\begin{corollary}\label{cor:K_T-description}
Let $A=\R[p_2,p_3,\ldots]$, $I \subset A$ the ideal generated by $1-p_2$ and $T= (\SSigma \cap A) + I$. Then
    \[\{ (\varphi(p_2),\varphi(p_3),\ldots) ~|~ \varphi \in \mathcal{K}_T \}=\operatorname{cl} \{ (p_2(x),p_3(x),\dots) ~|~ x\in \R^\infty, ~ \Vert x\Vert_2=1 \},\]
    where the closure is taken with respect to the product topology on $\R^{\N}$.
\end{corollary}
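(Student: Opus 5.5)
We prove the two inclusions separately. Throughout, a homomorphism $\varphi$ is identified with the sequence $(\varphi(p_{j+2}))_{j\in\N_0}$, since $p_2,p_3,\ldots$ are algebraically independent generators of $A$.

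For "$\supseteq$", take $x\in\R^\infty$ with $\Vert x\Vert_2=1$. The evaluation $\varphi_x:A\to\R$, $f\mapsto f(x)$, is a ring homomorphism with $\varphi_x(1-p_2)=0$. Any $\sigma\in\SSigma\cap A$ truncates to a sum of squares in finitely many variables, so $\varphi_x(\sigma)\ge 0$. Hence $\varphi_x\in\mathcal{K}_T$.

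Next we show $\mathcal{K}_T$, viewed in $\R^{\N}$, is closed in the product topology. Each condition $\varphi(f)\ge0$ for $f\in T$ is a polynomial inequality in finitely many coordinates $\varphi(p_k)$, so it cuts out a closed set. The set of homomorphisms is all of $\R^{\N}$, again because the generators are algebraically independent. Intersecting over all $f\in T$ gives a closed set, and so the closure of the evaluation points is contained in $\mathcal{K}_T$.

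For "$\subseteq$", fix $\varphi\in\mathcal{K}_T$ and set $m_j=\varphi(p_{j+2})$. We chain the parts of Lemma \ref{lem: K_T classification}.
\begin{itemize}
\item[] By part (1), $(m_j)$ is the moment sequence of a probability measure $\mu$ on $[-1,1]$.
\item[] By part (3), $\mu$ is discrete.
\item[] By part (4), every nonzero atom is isolated in the support.
\item[] By part (5), every nonzero atom $x$ satisfies $N_x:=\mu(x)/x^2\in\N$.
\end{itemize}
Consequently $\mu=c\,\delta_0+\sum_i N_i x_i^2\,\delta_{x_i}$ with distinct nonzero $x_i$ and $c=\mu(\{0\})\ge0$. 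Part (6) then places $(m_j)$ in the closure of $\{(p_{2+j}(y))_j : y\in\R^\infty,\ p_2(y)=1\}$, which is the required inclusion.

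The main point needing care is matching the hypotheses of part (6) with the structure just derived, namely handling the atom at $0$ and the countable sum. Part (6) covers both, since $c\,\delta_0$ is approximated by $n$ entries equal to $\sqrt{c/n}$ and tails of $\ell^2$ sequences are approximated by finite ones. So this step reduces to a bookkeeping check that the decomposition has exactly the form required there. Remark \ref{rem: same images} is not needed here; it enters only when passing to $f\ge\varepsilon$ in Theorem \ref{thm: dimension-independent reznick}.
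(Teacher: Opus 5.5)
Your proposal is correct and follows the paper's route: the paper obtains the corollary as an immediate consequence of Lemma~\ref{lem: K_T classification}, chaining parts (1), (3), (4), (5) and (6) exactly as you do. The only addition is that you spell out the inclusion ``$\supseteq$''. The paper leaves this direction implicit. You show that point evaluations lie in $\mathcal{K}_T$, and that $\mathcal{K}_T$ is closed in the product topology because each $f\in T$ involves only finitely many $p_k$.
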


In particular, combining Proposition \ref{prop: orbit space} and Corollary \ref{cor:K_T-description} we obtain that the closure of the orbit space of the slice of $\ell^2$ at $p_2=1$ with respect to the group $\S_\infty$ modulo redundant $0$ entries can be identified with the character space $\mathcal{K}_T$ of non-negative ring homomorphisms on $T$. 
Moreover, the character space $\mathcal{K}_T$ corresponds to the Thoma simplex which characterizes the characters of all irreducible representations of the small infinite symmetric group \cite{Thoma64}. 

The following results will be used in Section \ref{sec: generalizations reznick} to prove generalizations of Theorem \ref{thm: dimension-independent reznick} concerning a non-homogeneous variant and homogeneous symmetric functions that involve $p_1$. Therefore, we consider the ring 
\[ B:=\R[X_0,p_2,p_3,\ldots] \subset \R[[X_0,X_1,X_2,\ldots]]\]
and the set of \emph{any-dimensional} sums of squares in $B$, i.e., those elements $f \in B$ whose truncations to all finite numbers of variables are sums of squares. We denote this set by $\widetilde \SSigma.$

\begin{corollary}\label{cor:K_T-description non-homogeneous}
Let $B=\R[X_0,p_2,p_3,\ldots]$, $\widetilde I \subset B$ the ideal generated by $1-p_2-X_0^2$ and $\widetilde T= (\widetilde{\SSigma} \cap B) + \widetilde I$. Then
    \[\{ (\varphi(X_0),\varphi(p_2),\varphi(p_3),\ldots) ~|~ \varphi \in \mathcal{K}_{\widetilde T} \}=\operatorname{cl} \{ (x_0,p_2(x),p_3(x),\dots) ~|~ (x_0,x)\in \R \times \R^\infty, ~ x_0^2+p_2(x)=1 \},\]
    where the closure is taken with respect to the product topology on $\R^{\N}$.
\end{corollary}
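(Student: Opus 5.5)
We prove the two inclusions separately, following the proof of Lemma~\ref{lem: K_T classification} with the extra variable $X_0$ carried along. The inclusion ``$\supseteq$'' is the easy one. For $(x_0,x)\in\R\times\R^\infty$ with $x_0^2+p_2(x)=1$, the evaluation $\varphi_{(x_0,x)}$ vanishes on $\widetilde I$. It is also non-negative on $\widetilde\SSigma\cap B$, because such an element truncates to an sos polynomial in finitely many variables. Hence $\varphi_{(x_0,x)}\in\mathcal K_{\widetilde T}$. The set $\mathcal K_{\widetilde T}$ is closed in the product topology, since it is cut out by the conditions $\varphi(f)\ge 0$, each continuous in finitely many coordinates. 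Therefore the closure is contained in it as well.

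For ``$\subseteq$'', fix $\varphi\in\mathcal K_{\widetilde T}$ and set $x_0:=\varphi(X_0)$ and $m_j:=\varphi(p_{j+2})$. We show three things.

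First, $m_0=1-x_0^2$, since $1-p_2-X_0^2\in\widetilde I$. In particular $x_0^2\le 1$, because $p_2$ is a sum of squares.

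Second, we treat the case $m_0>0$. Consider the rescaled sequence $\widetilde m_j:=m_j/m_0^{(j+2)/2}$. The aim is to show that $(\widetilde m_j)$ satisfies properties (1)--(6) of Lemma~\ref{lem: K_T classification}, i.e.\ it is the moment sequence of a discrete probability measure of the form $c\delta_0+\sum N_i y_i^2\delta_{y_i}$. Equivalently, $(m_j)$ is the moment sequence of $c'\delta_0+\sum N_i x_i^2\delta_{x_i}$ with total mass $1-x_0^2$ and $|x_i|\le 1$.

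The certificates used in Lemma~\ref{lem: K_T classification} have to be adapted to the relation $p_2=1-X_0^2$:
\begin{itemize}
\item Case~1 of (1) used only elements $\sum_i X_i^2h(X_i)^2$. These still lie in $\widetilde\SSigma\cap B$, so positivity of the functional $L_m$ on squares survives unchanged.
\item Case~2 of (1) needs $L_m((1-t^2)h^2)\ge0$. Here the same identity as before gives
\[\textstyle\sum X_i^2(1-X_i^2)h(X_i)^2=(1-p_2-X_0^2)\sum X_i^2h^2+X_0^2\sum X_i^2h(X_i)^2+\sum_{i<j}X_i^2X_j^2(h(X_i)^2+h(X_j)^2),\]
and the extra term $X_0^2\sum X_i^2h(X_i)^2$ is sos.
\item Items (3)--(5) use only the elements $e_2(X_1^2h(X_1)^2,\dots)$ and $e_k(X_1^2h(X_1)^2,\dots)$. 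These lie in $\SSigma\cap A\subset\widetilde\SSigma\cap B$, and the integral identities (2) are unchanged. The only change is that the measure $\mu$ now has mass $1-x_0^2$ rather than $1$. The arguments in (3)--(5) never used the total mass being $1$, beyond finiteness and the bound $\mu(I_\delta)\to0$, so they apply verbatim.
\end{itemize}
Consequently $\mu=c'\delta_0+\sum N_ix_i^2\delta_{x_i}$ with $N_i\in\N$.

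Third, we approximate. Take $y^{(n)}=(\sqrt{c'/n},\dots,\sqrt{c'/n},x_1^{\times N_1},x_2^{\times N_2},\dots)$ and truncate to finitely many of the $x_i$. The lost $p_2$-mass $r_n\to0$ is absorbed by enlarging the $\delta_0$-block to mass $c'+r_n$. Then $p_2=1-x_0^2$ holds exactly, and the pairs $(x_0,y)$ converge coordinatewise to $(x_0,m_0,m_1,\dots)$. The degenerate case $m_0=0$ (so $x_0=\pm1$) is handled the same way. There $L_m$ is a positive functional on $[-1,1]$ with $L_m(1)=0$, so all $m_j=0$, and the points $(\pm1,0,0,\dots)$ are attained exactly by $x=0$.

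The main obstacle is the bookkeeping in the second step. One must check that every certificate used in Lemma~\ref{lem: K_T classification} remains in $\widetilde T$ once the relation is $p_2=1-X_0^2$. Case~2 of (1) is the only place where $X_0$ genuinely enters, and there it contributes the harmless sos term shown above.
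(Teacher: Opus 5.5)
Your proof is correct, but it re-runs Lemma~\ref{lem: K_T classification} inside $\widetilde{T}$, whereas the paper reduces to it. For $a=\varphi(X_0)$ with $a^2<1$, the paper defines $\widetilde{\varphi}\colon A\to\R$ by $\widetilde{\varphi}(p_k)=(1-a^2)^{-k/2}\varphi(p_k)$, which is precisely your sequence $\widetilde m_j$. It then notes that $\widetilde{\varphi}\in\mathcal K_T$: the substitution $X_i\mapsto X_i/\sqrt{1-a^2}$ maps $\SSigma\cap A$ into $\SSigma\cap A\subset\widetilde{T}$, and $\widetilde{\varphi}(1-p_2)=1-\varphi(p_2)/(1-a^2)=0$. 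Corollary~\ref{cor:K_T-description} then applies as a black box, and one rescales back. The case $a^2=1$ is handled via $p_2\pm p_k\in\widetilde{T}$, which amounts to your observation that $L_m\ge 0$ on $[-1,1]$ with $L_m(1)=0$ forces $m_j=0$.

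You had $\widetilde m_j$ in hand but did not make this one-line check. Instead you re-verified each certificate for the unnormalized measure of mass $1-x_0^2$, and those verifications are sound. Your Case~2 identity is correct, with $X_0^2\sum_i X_i^2h(X_i)^2$ as the extra sos term. Items (3)--(5) use only finiteness of the mass and the elements $e_2(\dots),e_k(\dots)\in\SSigma\cap A\subset\widetilde{\SSigma}\cap B$. Your approximants $(x_0,y^{(n)})$, with the $\delta_0$-block enlarged to mass $c'+r_n$, satisfy $x_0^2+p_2(y^{(n)})=1$ exactly and converge coordinatewise.

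Strictly, your bullets establish the claim for $(m_j)$ rather than for $(\widetilde m_j)$ as announced. The two are equivalent once the atomic form with total mass $1-x_0^2$ is known, so the detour through $\widetilde m_j$ is simply unused.

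The paper's route is shorter and modular. Yours is self-contained: it spells out the closedness of $\mathcal K_{\widetilde{T}}$ needed for ``$\supseteq$'' and writes the approximating points explicitly on the original scale. The cost is duplicating the whole measure-theoretic argument.
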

\begin{proof}
It is clear that the point evaluations are valid elements in the character space of $\widetilde T$. To prove the reverse inclusion let $\varphi$ be in $\mathcal{K}_{\widetilde T}$ and set $a:=\varphi(X_0)$.
\\
\underline{Case 1:} Suppose that $a^2=1$. Then $\varphi(p_2)=0$, because $1-p_2-X_0^2 \in \widetilde T$ and $p_2\in \widetilde T$. From the proof of Archimedeanity of $\widetilde T$, we obtain $p_2\pm p_k \in \widetilde T$, so $\varphi(p_k)=0$. So $\varphi$ is realized by the point $(a,0,0,0,\dots)$.
\\
\underline{Case 2:} Suppose that $a^2<1$. Then we define a homomorphism
$\widetilde \varphi: A\to \R$ by 
\[\widetilde \varphi(p_k)=(1-a^2)^{-k/2}\varphi(p_k)=\varphi(p_k((1-a^2)^{-1/2}X_1,(1-a^2)^{-1/2}X_2,\dots)).\]
Then $\widetilde \varphi \in \mathcal{K}_T$. Therefore, applying Corollary \ref{cor:K_T-description} and rescaling yields the claim.
\end{proof}
\begin{corollary}\label{cor:K_T-description with p_1}
    Let $A=\R[p_2,p_3,\ldots]$, $I \subset A$ the ideal generated by $1-p_2$ and $T= (\SSigma \cap A) + I$. Then
    \[\R \times \{(\varphi(p_2),\varphi(p_3),\ldots) ~|~ \varphi \in \mathcal{K}_T \}=\operatorname{cl} \{ (p_1(x),p_2(x),p_3(x),\dots) ~|~ x\in \R^\infty, ~ p_2(x)=1\},\]
    where the closure is taken with respect to the product topology on $\R^{\N}$.
\end{corollary}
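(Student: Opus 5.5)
The inclusion ``$\supseteq$'' is the substantive direction; ``$\subseteq$'' is immediate. For the latter, any point $(p_1(x),p_2(x),\dots)$ with $x\in\R^\infty$, $p_2(x)=1$ has tail $(p_2(x),p_3(x),\dots)$ in the set described by Corollary \ref{cor:K_T-description}. That set is closed, being the closure in the product topology. Hence the closure of the left-hand data lies in $\R\times\{(\varphi(p_k))_{k\ge 2}\mid\varphi\in\mathcal K_T\}$, because a product of closed sets is closed.

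For ``$\supseteq$'', fix $a\in\R$ and $\varphi\in\mathcal K_T$. By Corollary \ref{cor:K_T-description} there are $y^{(n)}\in\R^\infty$ with $p_2(y^{(n)})=1$ and $p_k(y^{(n)})\to\varphi(p_k)$ for every $k\ge 2$. It suffices to produce, for each finite $K$ and $\varepsilon>0$, a point $z\in\R^\infty$ with $p_2(z)=1$, $|p_1(z)-a|<\varepsilon$ and $|p_k(z)-\varphi(p_k)|<\varepsilon$ for $2\le k\le K$. The plan is to append to (a slight rescaling of) $y^{(n)}$ a block of many small equal coordinates. This block adjusts $p_1$ freely while costing little in $p_2$ and almost nothing in the higher power sums.

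Concretely, let $b:=a-p_1(y^{(n)})$ and append $N$ copies of $s:=b/N$. These contribute $b$ to $p_1$, $b^2/N$ to $p_2$, and $|b|^k/N^{k-1}$ to $p_k$, all of which tend to $0$ as $N\to\infty$ for $k\ge 2$. To restore the constraint $p_2=1$, rescale the $y^{(n)}$ part by $\lambda_N:=\sqrt{1-b^2/N}\to 1$. Rescaling changes $p_1$ of that part to $\lambda_N p_1(y^{(n)})$, so $b$ must be chosen self-consistently: solve
\[\lambda_N p_1(y^{(n)})+b=a,\]
which is a continuous fixed-point condition in $b$. Alternatively, one can simply note that the total $p_1$ equals $a+(\lambda_N-1)p_1(y^{(n)})\to a$ as $N\to\infty$, with $n$ fixed. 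Since $\lambda_N\to1$, $p_k$ of the rescaled part converges to $p_k(y^{(n)})$. Choosing $n$ large first and then $N$ large gives the required approximation by a diagonal argument in the product topology.

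The main obstacle is bookkeeping rather than concept: the order of limits must be handled so that $p_1(y^{(n)})$, which may be unbounded in $n$, only enters after $n$ has been fixed. The remedy is to fix $n$ first and then let $N\to\infty$. One also needs $b^2/N<1$, which holds for $N$ large. No sign issues arise, since $s$ may be negative.
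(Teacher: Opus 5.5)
Your proof is correct and takes essentially the paper's route: approximate $\varphi$ by points of $\R^\infty$ on $p_2=1$ via Corollary~\ref{cor:K_T-description}, append many equal small entries to shift $p_1$ to $a$ at negligible cost in $p_k$ for $k\geq 2$, and renormalize to $p_2=1$. The paper rescales the whole concatenated vector, so $p_1$ becomes $a$ times a factor tending to $1$, which avoids your two-step choice of $n$ and then $N$; note also that your labels $\subseteq$, $\supseteq$ and ``left-hand data'' are swapped relative to the statement, where the closure is the right-hand side, although both inclusions are argued correctly.
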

\begin{proof}
Since $p_1$ can take any value independently of $p_2,p_3,\ldots$ (if we allow taking the closure) the containment of the set on the right-hand side in the set on the left-hand side follows directly.
It remains to prove the remaining inclusion. Let $a\in \R$ and $\varphi \in \mathcal{K}_T$. By Corollary \ref{cor:K_T-description}, there are $x^{(n)}\in \R^\infty$ with $p_2(x^{(n)})=1$ and $p_k(x^{(n)})$ converges to $\varphi(p_k)$ for each integer $k \geq 3$.
    \\
    Define $b_n:= p_1(x^{(n)})$ and $c_n:=a-b_n$ and choose $M_n\in \N$ large enough that \[\left| \frac{c_n^{k}}{M_n^{k-1}} \right|\leq \frac{1}{n} \quad \text{for all $k\geq 2$}.\]
    This is possible, since the inequality $\max\left\{\left| \frac{c_n}{M_n} \right|,\left| \frac{c_n^2}{M_n} \right|  \right\}\leq 1$ suffices. Consider now
    \[y^{(n)}:=\Big( \underbrace{\frac{c_n}{M_n},\dots,\frac{c_n}{M_n}}_{\#=M_n}, x_1^{(n)},x_2^{(n)},x_3^{(n)},\dots\Big).\]
    We have $p_1(y^{(n)})=c_n+b_n=a$ and $p_k(y^{(n)})-p_k(x^{(n)})\xrightarrow{n\to \infty} 0$ for each $k\geq 2$. Define the normalized sequence
    \[z^{(n)}:=\frac{1}{\sqrt{p_2(y^{(n)})}}y^{(n)}=\frac{1}{\sqrt{1+\frac{c_n^2}{M_n}}}y^{(n)}\]
    and observe 
    \[p_1(z^{(n)})=\frac{a}{\sqrt{1+\frac{c_n^2}{M_n}}}\xrightarrow{n\to \infty}a, ~ p_2(z^{(n)})=1 \text{ and } p_k(z^{(n)})=\frac{p_k(y^{(n)})}{\left(1+\frac{c_n^2}{M_n}\right)^{k/2}}\xrightarrow{n\to \infty}\varphi(p_k) \text{ (for $k\geq 3$)},
    \]
    i.e., $(a,\varphi(p_2),\varphi(p_3),\dots)\in \operatorname{cl} \{ (p_1(x),p_2(x),p_3(x),\dots) ~|~ x\in \R^\infty, p_2(x)=1\}$.
\end{proof}

\section{Two generalizations of Theorem~\ref{thm: dimension-independent reznick}}\label{sec: generalizations reznick}
In this section we generalize the any-dimensional Reznick Positivstellensatz (Theorem~\ref{thm: dimension-independent reznick}) to non-homogeneous symmetric functions in $\R[p_2,p_3,\ldots]$ and symmetric functions in $\Lambda=\R[p_1,p_2,\ldots]$.
\subsection{Reznick's Positivstellensatz for non-homogeneous symmetric functions}
\begin{theorem}\label{thm: dimension-independent reznick non-homogeneous}
    Let $\varepsilon > 0$ and $f\in \R[p_2,p_3,\dots]$ be a symmetric function such that 
    \[
    f^h(x_0,x)\geq \varepsilon \text{ for all } (x_0,x)\in \R\times \R^\infty \text{ with } x_0^2+p_2(x)=1,
    \]
    where $f^h$ denotes the homogenization of $f$ with respect to $X_0$.
    Then there exists $k\in \N_0$ such that $(1+p_2)^k f$ is any-dimensional sos.
\end{theorem}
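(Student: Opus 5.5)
We follow the proof of Theorem \ref{thm: dimension-independent reznick}, replacing $A$ by the ring $B=\R[X_0,p_2,p_3,\ldots]$ and $T$ by $\widetilde T=(\widetilde\SSigma\cap B)+\widetilde I$ with $\widetilde I=(1-p_2-X_0^2)$.

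Write $f$ of degree $D$ and let $F:=f^h\in B$ be its homogenization with respect to $X_0$. Since $f\in\R[p_2,p_3,\ldots]$, $F$ is a polynomial in $X_0,p_2,p_3,\ldots$ and is homogeneous of degree $D$. If $D$ is odd, the hypothesis fails: $F(-x_0,-x)=-F(x_0,x)$ on the sphere, contradicting $F\geq\varepsilon$. So $D=2d$ is even.

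\textbf{Step 1: $\widetilde T$ is an Archimedean preordering.} The preordering property is checked exactly as in Lemma \ref{lem: T is Archimedean}. For Archimedeanity it suffices to show that $1\pm X_0\in\widetilde T$ and $1\pm p_k\in \widetilde T$ for $k\geq2$. We have
\[
1\pm X_0=\tfrac12(1\pm X_0)^2+\tfrac12(1-X_0^2),
\]
and
\[
1-X_0^2=(1-p_2-X_0^2)+p_2 \in\widetilde T .
\]
For $p_k$, write
\[
1\pm p_k=(1-p_2-X_0^2)+X_0^2+(p_2\pm p_k).
\]
Then reuse the Markov--Lukács certificate from Lemma \ref{lem: T is Archimedean}, now with $1-X_i^2=(1-p_2-X_0^2)+X_0^2+\sum_{j\neq i}X_j^2$. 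This gives $p_2\pm p_k\in\widetilde T$.

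\textbf{Step 2: $F$ is positive on the character space.} By Corollary \ref{cor:K_T-description non-homogeneous}, every $\varphi\in\mathcal K_{\widetilde T}$ is a limit, in the product topology, of evaluations at points $(x_0,x)\in\R\times\R^\infty$ with $x_0^2+p_2(x)=1$. The element $F$ involves only finitely many generators, so $\varphi(F)$ is a limit of values $F(x_0,x)\geq\varepsilon$. Hence $\varphi(F)\geq\varepsilon>0$. The representation theorem (Theorem \ref{thm: representation theorem}) then gives
\[
F=\sum_i g_i^2+r\,(1-p_2-X_0^2)
\]
with $g_i$ power series in $X_0,X_1,\ldots$ of bounded degree and $r\in B$.

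\textbf{Step 3: remove the ideal term.} Put $q:=X_0^2+p_2$ and substitute $X_j\mapsto X_j/\sqrt q$ for $j\geq0$. Homogeneity of $F$ gives
\[
q^{-d}F=\sum_i g_i\bigl(X/\sqrt q\bigr)^2 .
\]
Multiplying by $q^{2N}$ and splitting each term by the parity of monomial degrees yields
\[
q^{2N-d}F=\sum_i\bigl(g_{i,1}+g_{i,2}\sqrt q\bigr)^2 .
\]
The element $q$ is a non-square in the fraction field: it is irreducible, being a nondegenerate quadratic form in its lowest degree, and the power series ring is a UFD by Nishimura. Therefore the cross terms vanish and $q^{2N-d}F$ is a sum of squares of power series. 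Truncating to finitely many variables shows each truncation is sos.

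\textbf{Step 4: dehomogenize.} Set $X_0=1$. This ring homomorphism sends $q^{k}F$ to $(1+p_2)^kf$ and preserves sums of squares of polynomials in each truncation. Hence $(1+p_2)^kf$ is any-dimensional sos with $k=2N-d$, enlarging $N$ if needed so that $k\geq0$.

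The main obstacle is Step 1, specifically checking that the certificates stay inside $\widetilde\SSigma\cap B$. This uses $X_0^2\in\widetilde\SSigma$ and the observation that the Markov--Lukács sos pieces have no constant or linear term, so their power sums lie in $B$. Steps 2--4 are routine adaptations once Corollary \ref{cor:K_T-description non-homogeneous} is available.
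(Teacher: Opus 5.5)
Your proposal is correct and follows the paper's proof essentially step by step. It uses the same ring $B$ and preordering $\widetilde T$, proves Archimedeanity via $1\pm X_0$ and $1\pm p_k$, gets positivity on $\mathcal{K}_{\widetilde T}$ from Corollary~\ref{cor:K_T-description non-homogeneous}, and then applies the representation theorem, the substitution $X_j\mapsto X_j/\sqrt{X_0^2+p_2}$, and dehomogenization at $X_0=1$. Your Step 1 is in fact more careful than the paper: the paper only asserts $T\subseteq\widetilde T$, which is not literally true, since $p_2-1\in T$ is negative at the point $(x_0,x)=(1,0)$ of the sphere, whereas you rewrite the Markov--Luk\'acs certificate directly using $1-X_i^2=(1-p_2-X_0^2)+X_0^2+\sum_{j\neq i}X_j^2$, and your parity argument forcing $\deg f$ to be even is a detail the paper leaves implicit.
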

\begin{proof}
Let $B:=\R[X_0,p_2,p_3,p_4,\dots]$ and $\widetilde{\SSigma}\subseteq B$ be the set of any-dimensional symmetric sos in $B$, i.e., $f\in \widetilde{\SSigma}$ if and only if $f^{(n)}:=f(X_0,X_1,\dots,X_n,0,0,\dots)$ is a sos for all  $n\in \N$. Consider now the preordering
$\widetilde{T}:=\widetilde{\SSigma}+\widetilde I$, where $\widetilde I$ is the ideal in $B$ generated by $1-X_0^2-p_2$.
The preordering $\widetilde T$ is Archimedean, because $1-X_0^2=1-X_0^2-p_2+p_2 \in \widetilde T$, so $1\pm X_0 = \frac{1}{2}((1-X_0^2)+(X_0\pm 1)^2) \in \widetilde T$ and $T\subseteq \widetilde T$ (see Lemma \ref{lem: T is Archimedean}).
\\
By Corollary \ref{cor:K_T-description non-homogeneous}, we have that $f^h>0$ on $\mathcal{K}_{\widetilde T}$, so we can use the Representation Theorem \ref{thm: representation theorem} and conclude that $f^h\in \widetilde T$. Analogously to the proof of Theorem \ref{thm: dimension-independent reznick}, we substitute $\frac{X_i}{\sqrt{X_0^2+p_2}}$ for each $X_i$ and clear denominators to get that
\[(X_0^2+p_2)^kf^h \in \widetilde{\SSigma}.\]
We conclude that the dehomogenization $(1+p_2)^kf$ is an any-dimensional sos.
\end{proof}

\subsection{A Reznick Positivstellensatz including \texorpdfstring{$p_1$}{p1}}

We will need the following cylinder Positivstellensatz, which is a generalization by Schmüdgen and Schmötz \cite[Theorem~11.1]{schmudgen2024positivstellensatze} of a cylinder Positivstellensatz by Powers \cite{powers2004positive}.
\begin{theorem}[Cylinder Positivstellensatz]\label{thm: cylinder positivstellensatz}
Let $\mathcal{A}$ be a finitely generated commutative unital $\R$-algebra and $S$ an Archimedean semiring of $\mathcal{A}$. Suppose that $J$ is an unbounded closed subset of $\R$. Let
$f = \sum_{i=0}^{2r} f_i t^i \in \mathcal{A}[t]$ with coefficients $f_0, \dots, f_{2r} \in \mathcal{A}, ~ r \in \N_0$. Then the following are equivalent:
\begin{enumerate}
    \item $\varphi(f(a)) > 0$ for all $(a,\varphi) \in J \times \mathcal{K}_S$ and $\varphi(f_{2r}) > 0$ for all $\varphi \in \mathcal{K}_S$.
    \item There exists $\delta >0$ such that $f \in \delta(\mathbb{1} + t^2)^r + S \otimes \operatorname{Pos} (J)$, where \[S \otimes \operatorname{Pos} (J):=\left\{ \sum_{i=1}^m s_ir_i ~\middle|~ m \in \N, s_i\in S,~ r_i\in \R[t] \text{ such that } r_i \geq 0 \text{ on } J \right\}.\]
\end{enumerate}
\end{theorem}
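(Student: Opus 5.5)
The implication (2)$\Rightarrow$(1) is a direct evaluation, and I will do it first. Fix $\varphi\in\mathcal{K}_S$ and apply it coefficientwise to $f$, obtaining $\varphi(f)=\sum_i \varphi(f_i)t^i\in\R[t]$. From (2) we get
\[
\varphi(f)-\delta(1+t^2)^r=\sum_i \varphi(s_i)r_i=:g .
\]
Each $\varphi(s_i)\ge 0$ and each $r_i\ge 0$ on $J$, so $g\ge 0$ on $J$. Hence $\varphi(f(a))\ge \delta(1+a^2)^r>0$ for every $a\in J$. For the leading coefficient, note that $g$ has degree at most $2r$ and is non-negative on the unbounded set $J$. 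So either $\deg g<2r$, or $\deg g=2r$ is even and its leading coefficient is positive. In both cases the $t^{2r}$-coefficient of $g$, namely $\varphi(f_{2r})-\delta$, is $\ge 0$, which gives $\varphi(f_{2r})\ge\delta>0$.

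For (1)$\Rightarrow$(2) my plan is to compactify the $t$-line and apply an Archimedean representation theorem, namely the semiring version of Theorem~\ref{thm: representation theorem} due to Jacobi and Marshall. I would work with the ``degree $\le 2m$'' rational functions
\[
\R[t]_{2m}/(1+t^2)^m ,
\]
which are generated by $u=\frac{1}{1+t^2}$ and $v=\frac{t}{1+t^2}$, subject to the relation $v^2=u(1-u)$. Let $\mathcal{B}:=\mathcal{A}[u,v]$, and let $S'$ be the semiring generated by $S$ together with all quotients $q/(1+t^2)^m$ where $q\ge 0$ on $J$ and $\deg q\le 2m$. The steps are as follows.

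\begin{enumerate}
\item $S'$ is Archimedean. Indeed $S$ is Archimedean, and $1\pm u$ and $1\pm v$ lie in $S'$ because the corresponding numerators are non-negative on all of $\R$.
\item Classify the characters of $S'$. Restricted to $\R[u,v]$, a character gives a point of the circle $\{v^2=u(1-u)\}$ that is non-negative on all of the generators. Such points are exactly the images of $a\in J$ together with the point at infinity $u=v=0$, because positivity on the generators $q/(1+t^2)^m$ forces the point to lie in the closure of the image of $J$. So the characters are pairs $(a,\varphi)$ with $a\in J$, together with pairs $(\infty,\varphi)$, where $\varphi\in\mathcal{K}_S$.
\item Check that $F:=f/(1+t^2)^r\in\mathcal{B}$ is positive at every character. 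At $(a,\varphi)$ the value is $\varphi(f(a))/(1+a^2)^r>0$. At $(\infty,\varphi)$ the value is $\varphi(f_{2r})>0$. Both follow from (1).
\item Use compactness of $\mathcal{K}_{S'}$ to find $\delta>0$ with $F-\delta>0$ on $\mathcal{K}_{S'}$. The representation theorem then gives $F-\delta\in S'$.
\end{enumerate}

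The main obstacle is returning from $F-\delta\in S'$ to a polynomial certificate of the exact form $\delta(1+t^2)^r+S\otimes\operatorname{Pos}(J)$. The difficulty is that a representation in $S'$ involves terms $s\,q/(1+t^2)^m$ with $m>r$, and after multiplying by $(1+t^2)^r$ these are not polynomials individually. My first attempt would be to carry a degree filtration through the argument. That means replacing $S'$ by the cone of elements whose $t$-degree, measured against the denominator, is bounded by $2r$, and working in the graded setting in the style of Schmüdgen and Schmötz, so that all denominators are exactly $(1+t^2)^r$. Failing that, I would clear denominators to obtain $(1+t^2)^N\bigl(f-\delta(1+t^2)^r\bigr)\in S\otimes\operatorname{Pos}(J)$. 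I would then try to cancel the factor $(1+t^2)^{N-r}$ by regrouping the $t$-coefficients. This regrouping uses the fact that $S\otimes\operatorname{Pos}(J)$ is stable under multiplication by $\operatorname{Pos}(J)$, together with a perturbation of $\delta$ that absorbs the error terms.
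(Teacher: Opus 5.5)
Your proof of (2)$\Rightarrow$(1) is fine. Steps 1--4 of (1)$\Rightarrow$(2) are also correct as far as they go:
\begin{itemize}
\item $S'$ is Archimedean;
\item its characters are exactly the pairs in $(J\cup\{\infty\})\times\mathcal{K}_S$, and unboundedness of $J$ is what admits $\infty$;
\item $F$ is positive on all of them;
\item Kadison--Dubois gives $F-\delta\in S'$.
\end{itemize}
But after clearing denominators this only yields
\[
(1+t^2)^{N-r}\bigl(f-\delta(1+t^2)^r\bigr)\in S\otimes\operatorname{Pos}(J)\qquad\text{for some } N\ge r,
\]
which is a Positivstellensatz \emph{with denominators}. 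What (2) adds beyond this is the degree bound, and that is exactly the step you leave open. It matters for the paper: with the extra factor, the argument of Theorem~\ref{thm: dimension-independent reznick with p_1} would only give $(p_2+p_1^2)^{N-r}p_2^kf\in\SSigma$.

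Neither remedy you sketch works as stated.
\begin{itemize}
\item \emph{Cancellation.} Stability of $S\otimes\operatorname{Pos}(J)$ under multiplication by $\operatorname{Pos}(J)$ goes the wrong way for removing a factor. In $\sum_i s_i\tilde q_i$ only the sum is divisible by $(1+t^2)^{N-r}$, not the individual $\tilde q_i$. Since the $s_i$ are not scalars, there is no regrouping, and perturbing $\delta$ does not address this.
\item \emph{Graded variant.} The cone $M_r=\{\sum_i s_iq_i/(1+t^2)^r : s_i\in S,\ q_i\in\operatorname{Pos}(J),\ \deg q_i\le 2r\}$ is not Archimedean in $\mathcal{B}$. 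No $n$ gives $n-(1+t^2)^{-r-1}\in M_r$, because multiplying by $(1+t^2)^r$ does not land in $\mathcal{A}[t]$. So neither Theorem~\ref{thm: representation theorem} nor Kadison--Dubois applies to $M_r$.
\end{itemize}

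What is missing is a representation theorem for a module rather than a ring. Here is one way to supply it.
\begin{enumerate}
\item Work in the $\mathcal{A}$-module $V=\mathcal{A}\otimes\R[t]_{\le 2r}$ with the cone $C=\{\sum_i s_iq_i : s_i\in S,\ q_i\in\operatorname{Pos}(J),\ \deg q_i\le 2r\}$. It satisfies $S\cdot C\subseteq C$.
\item $e=(1+t^2)^r$ is an order unit for $C$. If $n\pm a\in S$ and $i\le 2r$, then $(n+a)(e+t^i)+(n-a)(e-t^i)=2(ne+at^i)$, and $e\pm t^i\ge 0$ on $\R$.
\item By compactness of $\mathcal{K}_S\times(J\cup\{\infty\})$ (this is where your Steps 2--3 enter), there is $\delta>0$ with $\varphi(f)-2\delta e\in\operatorname{Pos}(J)_{\le 2r}$ for all $\varphi\in\mathcal{K}_S$. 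Here $\varphi$ is applied coefficientwise.
\item Suppose $f-\delta e\notin C$. Since $e$ is an algebraic interior point of $C$, Eidelheit--Kakutani separation gives a state $\omega$ (meaning $\omega\ge 0$ on $C$ and $\omega(e)=1$) with $\omega(f-\delta e)\le 0$. The state space is weak-$*$ compact, so Bauer's minimum principle lets you take $\omega$ extreme.
\item For $s\in S$ rescaled so that $1-s\in S$, split $\omega=\omega(s\,\cdot)+\omega((1-s)\,\cdot)$. Extremality forces $\omega(ah)=\varphi(a)\omega(h)$ for all $a\in\mathcal{A}$ and $h\in V$, where $\varphi(a):=\omega(ae)$ lies in $\mathcal{K}_S$.
\item Hence $\omega(h)=\lambda(\varphi(h))$ for a functional $\lambda$ on $\R[t]_{\le 2r}$ that is nonnegative on $\operatorname{Pos}(J)_{\le 2r}$ with $\lambda(e)=1$. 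Then $\omega(f-\delta e)=\lambda(\varphi(f))-\delta\ge\delta>0$, a contradiction.
\end{enumerate}

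The paper itself gives no proof of this statement; it imports it from \cite{schmudgen2024positivstellensatze}.
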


In order to use Theorem \ref{thm: cylinder positivstellensatz}, we need to restrict to finitely many power sums, so we get a finitely generated $\R$-algebra $A_{N}:=\R[p_2,p_3,\dots,p_N]$. The following lemma shows that the character space of the restricted preordering is the restriction of the character space of the full preordering.

\begin{lemma}\label{lem: K_T projection}
    Let $T$ be an Archimedean preordering of $A:=\R[p_2,p_3,\dots]$. Denote by $A_{N}:=\R[p_2,p_3,\dots,p_N]$ and $T_N:=T\cap A_N$ the contraction to the first $N-1$ generators. Then the characters of $T_N$ in $A_N$ are exactly the projections of characters of $T$, i.e., we have
    \[\mathcal{K}_{T_N} = \pi_N(\mathcal{K}_T):=\left\{ \varphi|_{A_N} ~\middle|~ \varphi \in \mathcal{K}_T \right\}.\]
\end{lemma}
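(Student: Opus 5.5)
The inclusion $\pi_N(\mathcal{K}_T)\subseteq \mathcal{K}_{T_N}$ is immediate. If $\varphi\in\mathcal{K}_T$, then $\varphi|_{A_N}$ is a unitary ring homomorphism $A_N\to\R$, and it is non-negative on $T_N=T\cap A_N\subseteq T$. The whole difficulty lies in the converse. Given $\psi\in\mathcal{K}_{T_N}$, we must extend it to some $\varphi\in\mathcal{K}_T$. I would do this by a Zorn/compactness-type extension argument in the style of the proof of the representation theorem. Rather than constructing $\varphi$ directly, I would argue by contradiction via the representation theorem applied to $T_N$ and $A_N$.

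First I would check that $T_N$ is an Archimedean preordering of $A_N$. The preordering axioms pass to the intersection, since $A_N$ is a subring and $\sum A_N^2\subseteq T\cap A_N$. For the Archimedean property, take $a\in A_N$. Because $T$ is Archimedean there is $n$ with $n+a\in T$, and $n+a$ already lies in $A_N$, so $n+a\in T_N$. Hence Theorem~\ref{thm: representation theorem} is available for $T_N$ in $A_N$, and likewise for $T$ in $A$.

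Now suppose $\psi\in\mathcal{K}_{T_N}\setminus\pi_N(\mathcal{K}_T)$. The set $\mathcal{K}_T$ is compact, viewed inside $\R^{\N}$ through the values on generators. This follows from Archimedeanity: each $|\varphi(p_k)|$ is bounded by some $n_k$ with $n_k\pm p_k\in T$, and $\mathcal{K}_T$ is closed under pointwise limits. The projection $\pi_N(\mathcal{K}_T)\subseteq\R^{N-1}$ is therefore compact. Since $\psi$, identified with the point $(\psi(p_2),\dots,\psi(p_N))$, lies outside it, Stone--Weierstrass (or simply a smooth bump/distance function approximated by polynomials) gives a polynomial $g\in A_N$ with $g(\psi)<0$ and $g>0$ on $\pi_N(\mathcal{K}_T)$.

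Then $\varphi(g)>0$ for every $\varphi\in\mathcal{K}_T$, and Theorem~\ref{thm: representation theorem} applied to the Archimedean preordering $T$ in $A$ gives $g\in T$. As $g\in A_N$, we get $g\in T\cap A_N=T_N$, hence $\psi(g)\ge0$, a contradiction. This gives $\mathcal{K}_{T_N}\subseteq\pi_N(\mathcal{K}_T)$.

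The main obstacle is the separation step. One must ensure that a strictly negative value at $\psi$ and strict positivity on the compact projection can be achieved simultaneously by a polynomial. For this I would take the continuous function $x\mapsto \operatorname{dist}(x,\pi_N(\mathcal{K}_T))-\tfrac12\operatorname{dist}(\psi,\pi_N(\mathcal{K}_T))$ with the sign flipped. I would approximate it uniformly on a large box containing both $\psi$ and the projection, within less than a quarter of the distance.
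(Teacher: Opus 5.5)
Your proof is correct and follows the paper's argument. The steps are the same: the trivial inclusion, compactness of $\pi_N(\mathcal{K}_T)$, a polynomial in $A_N$ that is positive on $\mathcal{K}_T$ but negative at $\psi$, and then the representation theorem for $T$ in $A$ forcing that polynomial into $T\cap A_N=T_N$, which is a contradiction. The one difference is that the paper avoids Stone--Weierstrass in the separation step: it takes the explicit polynomial $\sum_{i=2}^N\bigl(p_i-\psi(p_i)\bigr)^2-\tfrac{m}{2}$, where $m>0$ is the minimum of this squared distance over the compact set $\pi_N(\mathcal{K}_T)$, so the step you flagged as the main obstacle is in fact immediate.
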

\begin{proof}
    The inclusion $\pi_N(\mathcal{K}_T)\subseteq \mathcal{K}_{T_N}$ is trivial. The preordering $T_N$ is obviously Archimedean and the character space $\mathcal{K}_T$ and therefore also $\pi_N(\mathcal{K}_T)$ are compact.
    \\
    Suppose $\mathcal{K}_{T_N}\not\subseteq \pi_N(\mathcal{K}_T)$, i.e., there is $\varphi\in \mathcal{K}_{T_N}\setminus \pi_N(\mathcal{K}_T)$. Consider the polynomial
    \[d=\sum_{i=2}^N \left(p_i - \varphi\left(p_i\right)\right)^2.\]
    Since $\varphi\notin \pi_N(\mathcal{K}_T)$ and $\pi_N(\mathcal{K}_T)$ is compact, we have
    \[m:=\min \left\{ \psi(d) ~\middle|~ \psi \in \pi_N(\mathcal{K}_T) \right\} >0.\]
    Now the polynomial $f:=d-\frac{m}{2}$ is positive on $\mathcal{K}_T$ and therefore $f\in T$ by the Representation Theorem \ref{thm: representation theorem}, but $f\in A_N$ by construction which implies $f\in T_N$. So $\varphi(f)\geq 0$ which is a contradiction to $\varphi(f)=-\frac{m}{2}<0$.
\end{proof}

The following theorem is a generalization of Theorem \ref{thm: dimension-independent reznick} to symmetric functions including $p_1$.

\begin{theorem}\label{thm: dimension-independent reznick with p_1}
Let $d\in\N$, $\varepsilon>0$, and let $f\in \Lambda$
be homogeneous of degree $2d$. Write $f$ as a polynomial in $p_1$, i.e.
\[
    f=\sum_{j=0}^{2r} f_j p_1^j
    \qquad \text{for some} \quad r\in \N_0, ~f_0,\dots,f_{2r}\in A_{2d}:=\R[p_2,p_3,\dots,p_{2d}],
\]
where $f_{2r}\neq 0$. Assume that
\begin{enumerate}
    \item $f(x)\geq \varepsilon$ for all $x\in\R^\infty$ with $p_2(x)=1$.
    \item $f_{2r}(x) \geq \varepsilon$ for all $x\in\R^\infty$ with $p_2(x)=1$.
\end{enumerate}
Then there exists $k\in\N_0$ such that
\[
    p_2^k f\in\SSigma .
\]
\end{theorem}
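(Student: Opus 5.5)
Our plan is to apply the Cylinder Positivstellensatz (Theorem \ref{thm: cylinder positivstellensatz}) with $\mathcal{A}=A_{2d}=\R[p_2,\dots,p_{2d}]$, the Archimedean semiring $S=T_{2d}=T\cap A_{2d}$ (Archimedean by Lemma \ref{lem: T is Archimedean}, since $1\pm p_k\in T\cap A_{2d}$ for $k\le 2d$), $J=\R$, and the variable $t$ standing for $p_1$. Then we dehomogenize the resulting certificate as in the proof of Theorem \ref{thm: dimension-independent reznick}.

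\textbf{Step 1 (verifying hypothesis (1)).} By Lemma \ref{lem: K_T projection}, $\mathcal{K}_{T_{2d}}=\pi_{2d}(\mathcal{K}_T)$. By Corollary \ref{cor:K_T-description with p_1}, every pair $(a,\varphi)\in\R\times\mathcal{K}_{T_{2d}}$ is a limit (in the product topology) of evaluations $(p_1(x),p_2(x),\dots)$ with $x\in\R^\infty$ and $p_2(x)=1$. Since $f$ is a polynomial in $p_1,\dots,p_{2d}$, continuity gives $\varphi(f(a))\ge\varepsilon>0$ from assumption (1). Likewise, assumption (2) and Corollary \ref{cor:K_T-description} give $\varphi(f_{2r})\ge\varepsilon>0$. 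The theorem then yields
\[ f=\delta(1+p_1^2)^r+\sum_i s_i\, q_i(p_1), \qquad s_i\in T_{2d},\; q_i\in\R[t],\; q_i\ge 0 \text{ on } \R. \]
Each $q_i$ is a sum of squares in $\R[t]$, so $q_i(p_1)\in\SSigma$, and $(1+p_1^2)^r$ is sos. Writing $s_i=\sigma_i+r_i(1-p_2)$ with $\sigma_i\in\SSigma\cap A$, we obtain $f=\sigma+\rho\,(1-p_2)$, where $\sigma$ is a sum of squares of symmetric-function expressions (products of sos elements are sos) and $\rho\in\Lambda$.

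\textbf{Step 2 (homogenizing).} We substitute $X_i\mapsto X_i/\sqrt{p_2}$ exactly as in the proof of Theorem \ref{thm: dimension-independent reznick}. The ideal term vanishes, and $f$ scales by $p_2^{-d}$. Multiplying by $p_2^{2N}$ for large $N$ writes $p_2^{2N-d}f$ as $\sum_i (g_{i,1}+g_{i,2}\sqrt{p_2})^2$ with $g_{i,j}\in\R[[\X]]$ of bounded degree. Since $p_2$ is not a square in $\operatorname{Quot}(\R[[\X]])$, the cross terms cancel and $p_2^{2N-d}f=\sum_i(g_{i,1}^2+p_2 g_{i,2}^2)$, which lies in $\SSigma$. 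This gives the claim with $k=2N-d$. To keep the bookkeeping clean, it is convenient to note that every element of $\SSigma$ is a sum of squares of bounded-degree power series (Proposition A.6 of \cite{acevedo2025symmetric}, as cited), so the substitution step applies verbatim.

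\textbf{Main obstacle.} The delicate point is Step 1: we must ensure that the characters of the finitely generated algebra $A_{2d}$ really are limits of point evaluations with $p_1$ free. This is exactly why we combine Lemma \ref{lem: K_T projection} with Corollary \ref{cor:K_T-description with p_1}. We also need the strict positivity of $f_{2r}$, which is what the cylinder theorem requires in order to handle $p_1\to\pm\infty$. A secondary point is checking that $S\otimes\operatorname{Pos}(\R)$ lands inside $\SSigma+\Lambda(1-p_2)$, which holds because nonnegative univariate polynomials are sums of squares.
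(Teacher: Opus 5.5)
Your proposal is correct and follows the paper's proof essentially step for step. You apply the cylinder Positivstellensatz with $\mathcal{A}=A_{2d}$, $S=T_{2d}$, $J=\R$, verify its hypotheses via Lemma \ref{lem: K_T projection} and Corollaries \ref{cor:K_T-description} and \ref{cor:K_T-description with p_1}, deduce $f\in\SSigma+(1-p_2)\Lambda$, and finish with the substitution $X_i\mapsto X_i/\sqrt{p_2}$ from the proof of Theorem \ref{thm: dimension-independent reznick}. Your extra remarks, that $T_{2d}$ is Archimedean because $1\pm p_k\in T\cap A_{2d}$ and that the projection lemma is also needed for hypothesis (1), only make explicit details the paper leaves implicit.
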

Note that any $f =\sum_{j=0}^{2r+1} f_j p_1^j \in \Lambda$ with $f_j\in A_{2d}:=\R[p_2,p_3,\dots,p_{2d}]$ and $f_{2r+1}\neq 0$ cannot be non-negative. This follows from the fact that $p_1$ can attain any value independently of $p_2,\ldots,p_{2d}$. 
\begin{proof}
    Let $T:=\SSigma\cap A+I$, where $I$ is the ideal generated by $1-p_2$ in $A:=\R[p_2,p_3,\dots]$ and let $T_{2d}:=T\cap A_{2d}$. Consider $F=\sum_{j=0}^{2r} f_j t^j\in A_{2d}[t]$. Condition (1) implies that
    $F(a)>0$ on $\mathcal{K}_{T_{2d}}$ for all $a\in \R$ using Corollary \ref{cor:K_T-description with p_1} and condition (2) implies that $f_{2r}>0$ on $\mathcal{K}_{T_{2d}}$ using Corollary \ref{cor:K_T-description} and Lemma \ref{lem: K_T projection}. Now we can use the cylinder Positivstellensatz (Theorem~\ref{thm: cylinder positivstellensatz}) with $\mathcal{A}=A_{2d}$, $S=T_{2d}$ and $J=\R$ to obtain $\delta >0$ such that
    \[F\in \delta(1 + t^2)^r + T_{2d} \otimes \operatorname{Pos} (\R).\]
    Since $\operatorname{Pos} (\R)=\{ q_1^2 + q_2^2 ~|~ q_1,q_2\in\R[t]\}$, we get
    \[f=F(p_1)\in \delta(1 + p_1^2)^r + T_{2d} \otimes \{ q_1^2 + q_2^2 ~|~ q_1,q_2\in\R[p_1]\}\subseteq \SSigma + (1-p_2)\Lambda.\]
        Analogously to the proof of Theorem \ref{thm: dimension-independent reznick}, we substitute $\frac{X_i}{\sqrt{p_2}}$ for each $X_i$ and clear denominators to get that $p_2^kf \in \SSigma$.
\end{proof}
Note that if $f$ does not involve $p_1$, then Theorem~\ref{thm: dimension-independent reznick with p_1} gives us exactly Theorem~\ref{thm: dimension-independent reznick}. 

\section{Bad points of symmetric functions}
In this section we construct an even homogeneous symmetric function $f\in \R[p_2,p_3,\dots,p_{2d}]$ that is non-negative on $\R^\infty$ and $p_2^kf$ is not any-dimensional sos for any positive integer $k$. The example demonstrates that one cannot replace $\varepsilon$ by $0$ in Corollary \ref{cor:infinite Polya} or Theorem \ref{thm: dimension-independent reznick}.
\\
To this end, we introduce the notion of a bad point.

\begin{definition}
Let $f\in  \R[X_1,\dots,X_n]$ be positive semidefinite and
$P\in  \R^n$. We say that $P$ is a \emph{bad point} of $f$ if
for every identity
\[
f=\sum_{i=1}^r \left(\frac{r_i}{s}\right)^2
\]
with $r_i,s\in \R[X_1,\dots,X_n]$, one has $s(P)=0$.
\end{definition}

Having a bad point $P$ is the same as saying that the polynomial is not a sos in the corresponding local ring
\[\R[X_1,\dots,X_n]_P:= \left\{ \frac{r}{s} \in \R(X_1,\dots,X_n) ~\middle| ~ s(P)\neq 0\right\}.\]
\begin{lemma}
Let $P\in \R^n$ and let $\R[X_1,\dots,X_n]_P$ denote the corresponding local ring. For a non-negative polynomial $f$, the following are equivalent:
\[
P \text{ is a bad point of } f
\quad\Longleftrightarrow\quad
f\notin \sum \R[X_1,\dots,X_n]_P^2.
\]
\end{lemma}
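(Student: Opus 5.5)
Both implications come down to clearing denominators. The only algebraic facts needed are that $\R[X_1,\dots,X_n]_P$ is closed under sums and products, and that $s(P)\neq 0$ is preserved under products.

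For ($\Leftarrow$), argue by contraposition. Assume $P$ is not a bad point. Then there is an identity $f=\sum_{i=1}^r (r_i/s)^2$ with $r_i,s\in\R[X_1,\dots,X_n]$ and $s(P)\neq 0$. Each quotient $r_i/s$ lies in $\R[X_1,\dots,X_n]_P$ by definition of the local ring. Hence $f\in\sum \R[X_1,\dots,X_n]_P^2$, which is the contrapositive of ($\Leftarrow$).

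For ($\Rightarrow$), again argue by contraposition. Assume $f=\sum_{i=1}^r (a_i/b_i)^2$ with $a_i,b_i$ polynomials and $b_i(P)\neq 0$ for all $i$. Put $s:=\prod_{j=1}^r b_j$, so $s(P)=\prod_j b_j(P)\neq 0$. Set $r_i:=a_i\prod_{j\neq i}b_j$; then $a_i/b_i=r_i/s$. This gives $f=\sum_i (r_i/s)^2$ with $s(P)\neq 0$, so $P$ is not a bad point.

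The only step requiring any care is bringing the fractions to a common denominator while keeping it nonvanishing at $P$. This works because the multiplicative set $\{s : s(P)\neq0\}$ is closed under products, $P$ being a point. Non-negativity of $f$ is not used in the argument; it only ensures that the notion of a bad point is meaningful, since otherwise neither side could hold.
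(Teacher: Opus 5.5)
Your proof is correct and is the paper's argument: both directions by clearing denominators, with the common denominator $s=\prod_j b_j$ written out explicitly where the paper just says ``after clearing denominators.'' One small slip in your closing aside: if $f$ were not psd, both sides would hold rather than neither (the bad-point condition vacuously, since no rational sos identity exists), but this does not affect the proof.
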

\begin{proof}
If $f=\sum_i \left(\frac{r_i}{s}\right)^2$ with $s(P)\neq 0$, then each $\frac{r_i}{s}$ belongs to the local ring $\R[X_1,\dots,X_n]_P$, so $f\in \sum \R[X_1,\dots,X_n]_P^2$. Conversely, if
\[
f=\sum_i g_i^2 \qquad (g_i\in \R[X_1,\dots,X_n]_P),
\]
write $g_i=\frac{r_i}{s_i}$ with $s_i(P)\neq 0$. After clearing denominators, this
yields a rational sum of squares representation of $f$ with a denominator that
does not vanish at $P$.
\end{proof}

If a polynomial has no bad point at the origin, then its lowest degree non-zero homogeneous part is a sum of squares. We write
\[\R[X_1,\dots,X_n]_0 \]
for the local ring at the origin.

\begin{lemma}\label{lem: sos_local}
Let $f\in \R[X_1,\dots,X_n]$ and write
\[f=f_k+f_{k+1}+f_{k+2}+\cdots+f_d\]
where each $f_i\neq 0$ is homogeneous of degree $i$.
If $f$ is a sum of squares in $\R[X_1,\dots,X_n]_0$, then $f_k$ is a sum of squares of
homogeneous polynomials in $\R[X_1,\dots,X_n]$.

In particular, if the lowest degree non-zero homogeneous part $f_k$ is not a sum of
squares, then $f$ is not a sum of squares in $\R[X_1,\dots,X_n]_0$.
\end{lemma}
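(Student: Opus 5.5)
The plan is to write $f=\sum_i g_i^2$ with $g_i=r_i/s_i$ and $s_i(0)\neq 0$. After putting everything over a common denominator we may assume a single denominator $s$ with $s(0)\neq 0$. This gives the polynomial identity
\[
s^2 f=\sum_{i=1}^r r_i^2, \qquad r_i,s\in\R[X_1,\dots,X_n],\ s(0)\neq 0.
\]
The proof then compares lowest-degree homogeneous parts on both sides.

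On the left, the lowest-degree part of $s^2$ is the constant $s(0)^2>0$. The lowest-degree part of $f$ is $f_k$, so the lowest-degree nonzero homogeneous part of $s^2 f$ is $s(0)^2 f_k$, in degree $k$.

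On the right, let $m=\min_i \operatorname{ord}(r_i)$, where $\operatorname{ord}$ denotes the degree of the lowest nonzero homogeneous component. Write $r_i=r_{i,m}+(\text{higher terms})$, with $r_{i,m}$ homogeneous of degree $m$, possibly zero for some $i$ but not for all. The degree-$2m$ component of $\sum r_i^2$ is $\sum_i r_{i,m}^2$, and all other components have degree $>2m$. The key point, and really the only step needing care, is that $\sum_i r_{i,m}^2\neq 0$. This holds because a nonzero real polynomial $r_{i,m}$ is nonzero at some point of $\R^n$, so the sum is strictly positive there; real sums of squares admit no cancellation.

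Hence the lowest-degree part of the right side is $\sum_i r_{i,m}^2$, in degree $2m$. Comparing with the left side gives $k=2m$ and
\[
f_k=\sum_i \left(\frac{r_{i,m}}{s(0)}\right)^2,
\]
a sum of squares of homogeneous polynomials of degree $m$.

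The ``in particular'' statement is the contrapositive of what we just proved. I expect no real obstacle beyond making sure the nonvanishing of the leading form $\sum_i r_{i,m}^2$ is justified.
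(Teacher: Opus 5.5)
Your proof is correct and follows the same route as the paper: clear denominators to get $s^2 f=\sum_i r_i^2$ with $s(0)\neq 0$, then compare lowest-degree homogeneous parts, where the left side contributes $s(0)^2 f_k$. You also spell out the step the paper leaves implicit, namely that the lowest-degree form $\sum_i r_{i,m}^2$ of a real sum of squares cannot cancel, which further gives $k=2m$.
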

\begin{proof}
Suppose that $f$ is a sum of squares in $\R[X_1,\dots,X_n]_0$, i.e.,
\[
f=\sum_{j=1}^N \frac{r_j^2}{s^2}
\]
for some $r_j,s\in \R[X_1,\dots,X_n]$ with $s(0)\neq 0$. Thus
\[s^2f=\sum_{j=1}^N r_j^2\]
which implies that $s^2f$ is a sum of squares in $\R[X_1,\ldots,X_n]$. Therefore its lowest degree homogeneous part $s(0)^2f_k$ and in particular $f_k$ must be sums of squares in $\R[X_1,\dots,X_n]$.
\end{proof}

The lemma gives a construction for polynomials with bad points due to Delzell \cite[Chapter V]{delzell1980constructive}. The following concrete example can be found in \cite{kaltofen2012exact}:

\begin{example}
    The Delzell polynomial is defined as
    \[D=X_1^4 X_2^2 X_4^2 + X_2^4 X_3^2 X_4^2 + X_1^2 X_3^4 X_4^2 - 3X_1^2 X_2^2 X_3^2 X_4^2 + X_3^8 \in \R[X_1,X_2,X_3,X_4].\]
    We want to show that $P=(0,0,0,1)$ is a bad point of $D$.
    If we dehomogenize with $X_4=1$ we get
    \[d=D(X_1,X_2,X_3,1)=M+X_3^8,\]
    where $M=X_1^4 X_2^2 + X_2^4 X_3^2 + X_1^2 X_3^4 - 3X_1^2 X_2^2 X_3^2$ is psd by the AM–GM inequality and not a sos (similar to the Motzkin polynomial \cite{motzkin1967arithmetic}). So $d$ is not sos in $\R[X_1,X_2,X_3]_0$ by Lemma \ref{lem: sos_local} and therefore $D$ is not sos in $\R[X_1,X_2,X_3,X_4]_P$. 
    So $D$ is positive semidefinite but has a bad point at $(0,0,0,1)$. Therefore
    \[\left(\sum_{i=1}^4 X_i^2\right)^kD\]
    is not a sum of squares for any $k\in\N$.
\end{example}

    We can use a similar construction to get symmetric polynomials with bad points. Consider the Robinson polynomial \cite{robinson1973some}
    \[R=X_1^6+X_2^6+X_3^6-\left(X_1^4X_2^2+X_1^2X_2^4+X_1^4X_3^2+X_1^2X_3^4+X_2^4X_3^2+X_2^2X_3^4\right)+3X_1^2X_2^2X_3^2\]
    which is psd and not a sos.

\begin{example}
For $m\ge 4$, define
\[
R_m(X_1,X_2,X_3,X_4)
=
\sum_{i=1}^4 X_i^{2m}R\left(X_1,\dots,\widehat{X_i},\dots,X_4\right),
\]
where $\widehat{X_i}$ means that the variable $X_i$ is omitted, and
$R$ denotes the Robinson polynomial in the remaining three variables.
Then $R_m$ is symmetric, psd and $P=(0,0,0,1)$ is a bad point of $R_m$. In particular,
    \[\left(\sum_{i=1}^4 X_i^2\right)^kR_m\]
    is not a sum of squares for any $k\in\N$.
\end{example}
\begin{proof}
The dehomogenization with $X_4=1$ is
\[r_m(X_1,X_2,X_3):=R_m(X_1,X_2,X_3,1)=R+X_1^{2m} R(1,X_2,X_3)+X_2^{2m}R(X_1,1,X_3)+X_3^{2m}R(X_1,X_2,1).\]
Since $m\geq 4$, the lowest degree non-zero homogeneous part is equal to the Robinson polynomial $R$. Hence $R_m$ is not a sum of squares in the local ring $\R[X_1,\dots,X_4]_P$ and therefore $P$ is a bad point of $R_m$.
\end{proof}

We can also extend this construction to get symmetric functions whose truncations to sufficiently many variables have bad points:

\begin{example}\label{ex: bad point reznick}
For $m\geq 6$ define
\[
A_m=\sum_{i=1}^\infty X_i^{2m}A\left(\widehat{X_i}\right),
\]
where $\widehat{X_i}$ means that the variable $X_i$ is omitted, and
\[A=\frac{1}{18}p_2^5+3p_8p_2+6p_6p_4-3p_6p_2^2\]
denotes the decic $A$ (from \cite[Proposition~5.27]{acevedo2025symmetric}) in the remaining variables.
Then $A_m$ is symmetric, any-dimensional psd and $P^{(n)}=(1,0,0,0,\dots,0)\in \R^n$ is a bad point of $A_m^{(n)}$ for all sufficiently large $n$. In particular, for sufficiently large $n$,
    \[\left(\sum_{i=1}^n X_i^2\right)^kA_m^{(n)}\]
    is not a sum of squares for any $k\in\N$.
Furthermore, $A_m$ can be expressed in the power sums by
\begin{align*}
A_m &
=Ap_{2m}
+\left(-\frac{5}{18}p_2^4+6p_2p_6-3p_8\right)p_{2m+2}
+\left(\frac{5}{9}p_2^3-9p_6\right)p_{2m+4}
+\left(\frac{22}{9}p_2^2-6p_4\right)p_{2m+6} \\
&
-\frac{157}{18}p_2p_{2m+8}
+\frac{215}{18}p_{2m+10}.    
\end{align*}

\end{example}
\begin{proof}
The dehomogenization with $X_1=1$ is
\[a_m(X_2,X_3,X_4,\dots):=A(X_2,X_3,X_4,\dots)+\sum_{i=2}^\infty X_i^{2m}A\left(\widehat{X_i}\right).\]
Since $m\geq 6$, the lowest degree non-zero homogeneous part is equal to $A$, which is psd and not sos for any $n$ large enough. Hence $a_m^{(n)}$ is not sos in $\R[X_1,\dots,X_n]_0$ by Lemma \ref{lem: sos_local} and therefore $A_m^{(n)}$ is not a sum of squares in the local ring $\R[X_1,\dots,X_n]_{P^{(n)}}$ for any sufficiently large $n$. So $A_m$ is positive semidefinite but has a bad point at $P^{(n)}$.
\\
To get the expression of $A_m$ in the power sums, we consider
\[B = \frac{1}{18}Z_2^5+3Z_8Z_2+6Z_6Z_4-3Z_6Z_2^2 \in \R[Z_2,Z_4,Z_6,Z_8].\]
We obtain
\begin{align*}
A\left(\widehat{X_i}\right)&=B\left(p_2-X_i^2,p_4-X_i^4,p_6-X_i^6,p_8-X_i^8\right) \\
&=A + \left(- \frac{5}{18} p_2^4 + 6 p_2 p_6 - 3 p_8\right) X_i^2 + \left(\frac{5}{9} p_2^3- 9 p_6\right)X_i^4
 +\left(\frac{22}{9} p_2^2- 6 p_4\right) X_i^6 - \frac{157}{18} p_2 X_i^8 + \frac{215}{18} X_i^{10}
\end{align*}
from which one obtains the desired expression of $A_m$.
\end{proof}

\begin{remark}
Example \ref{ex: bad point reznick} demonstrates that the condition $f  \geq \varepsilon > 0$ on the sphere in Corollary \ref{cor:infinite Polya} or Theorem \ref{thm: dimension-independent reznick} cannot be relaxed to $f$ being non-negative. This leaves open the case of $f$ being positive definite but having infimum $0$.
\end{remark}
\section{Positivstellensätze for normalized symmetric functions}
\label{sec:normalized}

In this section we consider \emph{power means} instead of power sums. It was already observed in \cite{acevedo2025power,blekherman2021symmetric} that the set of non-negative normalized symmetric functions is semialgebraic. Moreover, this normalized setting is essentially equivalent to the study of univariate polynomial inequalities in moments of probability measures on the real line, or normalized univariate trace polynomials. These settings have been investigated (in the multivariate case) independently in \cite{klep2022optimization, klep2026sums, klep2021}. The frameworks developed there are more general than normalized symmetric functions. For instance, they address multivariate moment inequalities in \cite{klep2026sums} and trace evaluations on non-commuting variables using operator-algebraic methods and the GNS construction in \cite{klep2021}.
Here, we present an alternative and elementary proof for the univariate setting which essentially follows from \cite{acevedo2025power}.
Our proof demonstrates that for normalized symmetric functions (equivalently, univariate pure trace polynomials or pure moment polynomials) the situation is actually simple. The Positivstellensätze follow directly from classical real algebraic geometry on Hankel spectrahedra.

\begin{definition}
By scaling the $k$-th power sum in $n$ variables $p_k^{(n)}$ by the factor $\frac{1}{n}$, we call $\frac{p_k^{(n)}}{n}$ the \emph{$k$-th power mean} in $n$ variables. Let $\Theta := \R[\mathfrak{p}_1,\mathfrak{p}_2,\ldots]$ denote the ring of \emph{normalized symmetric functions} in the power sum basis, where we consider $\mathfrak{p}_1,\mathfrak{p}_2,\ldots$ as homogeneous algebraically independent variables of degrees $1,2,\dots$. We identify $f=g(\mathfrak{p}_1,\ldots,\mathfrak{p}_d) \in \Theta$ with a sequence of symmetric polynomials $f^{(n)}=g\left(\frac{p_1^{(n)}}{n},\ldots,\frac{p_d^{(n)}}{n}\right)$. 
We call $f$ \emph{non-negative} (respectively \emph{sos}) if $f^{(n)} \in \mathcal{SP}_n$ (respectively $f^{(n)}\in \SSigma_n$) for all $n \in \N$. 
\end{definition}
The graded rings $\Lambda$ and $\Theta$ are isomorphic, but the evaluations of elements in these rings differ.

\begin{example}
    Consider $g(Z_1,Z_2,Z_3,Z_4):= Z_4-Z_2^2$. Replacing $Z_i$ by $p_i^{(n)}$ gives the symmetric polynomial
    \[g(p_1^{(n)},\dots,p_4^{(n)})=-2e_2^{(n)}(X_1^2,\dots,X_n^2)\]
    and therefore the symmetric function $g(p_1,\ldots,p_4) \in \Lambda$ is not non-negative.
    In power means we obtain
    \[g\left(\frac{p_1^{(n)}}{n},\dots,\frac{p_4^{(n)}}{n} \right)=\frac{1}{n^2} \sum_{1\leq i<j\leq n} (X_i^2-X_j^2)^2 \]
    and therefore $g(\mathfrak{p}_1,\dots,\mathfrak{p}_4) \in \Theta$ is a sum of squares.
\end{example}

We recall from \cite[Proposition~2.6]{blekherman2021symmetric} that $f^{(d\cdot n)}$ non-negative implies $f^{(n)}$ non-negative. Analogously, if $f^{(d\cdot n)}$ is sos then $f^{(n)}$ is sos. This follows immediately from the \emph{duplication map}
\begin{align}\label{eq: duplication}
\frac{p_i^{(n)}(X_1,\ldots,X_n)}{n} = \frac{p_i^{(d \cdot n)}(\overbrace{X_1,\ldots,X_1}^{\# = d},\ldots,\overbrace{X_n,\ldots,X_n}^{\# = d})}{dn}.     
\end{align}
We start with the following structural observation from \cite{acevedo2025power}. Let \[\mu_{n,2d}:=\left(\frac{p_1^{(n)}}{n},\ldots,\frac{p_{2d}^{(n)}}{n}\right) : \R^n \to \R^{2d}\]
denote the map consisting of the first $2d$ power means in $n$ variables. 
An element $f=g(\mathfrak{p}_1,\ldots,\mathfrak{p}_{2d}) \in \Theta$ is non-negative if $g(z) \geq 0$ for all points $z$ in the set 
\[\mathcal{H}_{2d} := \operatorname{cl} \bigcup_{n \in \N}  \mu_{n,2d}(\R^n)  .\] 
We can take the closure of the union of the images of the maps $\mu_{n,2d}$, since $g$ being non-negative on the union of $\mu_{n,2d}(\R^n)$ for $n \in \N$ implies by continuity that $g$ is non-negative on the boundary of the union. 
The relation of the set $\mathcal{H}_{2d}$ to univariate probability measures was pointed out in \cite[Section~4]{acevedo2025power}.
In fact, the map $\mu_{n,2d}$ is the moment map up to degree $2d$ of a uniform probability measure on the real line
supported on $n$ points (where the $0$-th moment is dropped). 
For $y \in \R^{2d}$ and $y_0:=1$ we define the Hankel matrix 
\begin{align*}
       H_{2d}(y):= (y_{i+j-2})_{1 \leq i,j \leq d+1}\quad = \quad \begin{pmatrix}
        1 & y_1 & y_2 & \dots & y_{d} \\
        y_1 & y_2 & y_3 & \dots & y_{d+1} \\
        \vdots & \vdots & \vdots & & \vdots \\
        y_d & y_{d+1} & y_{d+2} &  \dots & y_{2d}
    \end{pmatrix}.
\end{align*}
The set of all $y \in \R^{2d}$ for which the matrix $H_{2d}(y)$ is positive semidefinite is a Hankel spectrahedron, i.e., it is defined by a linear matrix inequality where the matrix is a Hankel matrix.
\begin{proposition}[\cite{acevedo2025power}~Proposition~4.1]\label{prop:hankel spectrahedron}
    The set $\mathcal{H}_{2d}$ is the set of all points $y \in \R^{2d}$ for which $H_{2d}(y)$ is positive semidefinite.
\end{proposition}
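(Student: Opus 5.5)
The claim is that the closure $\mathcal{H}_{2d}$ of the union of the images $\mu_{n,2d}(\R^n)$ equals the Hankel spectrahedron $\{y \in \R^{2d} : H_{2d}(y)\succeq 0\}$. We prove the two inclusions separately.

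\emph{First inclusion: $\mathcal{H}_{2d}$ lies in the spectrahedron.} Fix $x \in \R^n$ and set $y=\mu_{n,2d}(x)$. For a coefficient vector $c=(c_0,\ldots,c_d)$ let $q_c(t)=\sum_i c_i t^i$. Then
\[ c^T H_{2d}(y)\, c = \frac{1}{n}\sum_{k=1}^n q_c(x_k)^2 \geq 0, \]
because $y$ is the moment vector of the uniform measure $\frac1n\sum_k \delta_{x_k}$. The set of $y$ with $H_{2d}(y)\succeq 0$ is closed, since $H_{2d}$ is affine linear in $y$. Hence the closure of the union also lies in it.

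\emph{Second inclusion: the spectrahedron lies in $\mathcal{H}_{2d}$.} Let $H_{2d}(y)\succeq 0$.

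Step 1 (reduce to the interior). Assume first that $H_{2d}(y)\succ 0$. This suffices: the positive definite Hankel matrices are dense in the positive semidefinite ones, for instance via $y+\eta\, m(\nu)$, where $m(\nu)$ is the moment vector of a Gaussian $\nu$; this stays a Hankel form and is positive definite. Since $\mathcal{H}_{2d}$ is closed, the boundary case follows.

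Step 2 (obtain a finitely atomic measure). By the classical solution of the truncated Hamburger moment problem (Curto–Fialkow, or Richter/Tchakaloff together with the flat-extension theorem), a positive definite $H_{2d}(y)$ is the moment matrix, up to degree $2d$, of a probability measure $\sum_{j=1}^{r} w_j \delta_{t_j}$ with finitely many atoms and real weights $w_j>0$ summing to $1$.

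Step 3 (approximate by uniform measures). Choose rationals $w_j^{(N)}=a_j/N$ with $\sum_j a_j=N$ and $w_j^{(N)}\to w_j$. Define $x^{(N)}\in\R^N$ by repeating $t_j$ exactly $a_j$ times. Then
\[ \mu_{N,2d}(x^{(N)}) = \Bigl(\sum_j w_j^{(N)} t_j^{i}\Bigr)_{i=1}^{2d} \to y. \]
So $y$ lies in the closure of the union, i.e. $y \in \mathcal{H}_{2d}$. The duplication map (\ref{eq: duplication}) shows that refining $N$ is consistent, but this is not even needed, since the union is taken over all $n$.

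The main obstacle is Step 2: invoking the correct form of the truncated moment theorem, which says that positive definiteness of the truncated Hankel matrix guarantees a representing measure, with no extra rank or flatness condition. I would cite the standard result that every positive definite Hankel matrix admits a finitely atomic representing measure, obtained by extending it by one more moment to a flat positive extension. Alternatively, one can avoid it: using the dual cone, a nonnegative univariate polynomial is sos, so Riesz–Haviland-type duality for the closed convex hull of moment curves together with Carathéodory's theorem places every $y$ with $H_{2d}(y)\succeq 0$ in the closure of the convex hull of $\{(t,\ldots,t^{2d})\}$. Rational convex combinations are then realized by the $\mu_{n,2d}$ exactly as in Step 3.
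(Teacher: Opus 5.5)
Your proof is correct. The paper does not reprove this proposition; it only cites \cite{acevedo2025power} and remarks that $\mu_{n,2d}$ is the degree-$2d$ moment map of the uniform probability measure on $n$ atoms. Your two inclusions (Hankel positivity of moment vectors, then a finitely atomic representing measure followed by rational approximation of the weights) are exactly the argument that remark points to.

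There is one slip in Step 1. Because $H_{2d}$ fixes the $(1,1)$-entry at $1$, the perturbation $y+\eta\, m(\nu)$ need not give a positive semidefinite Hankel matrix. For example, take $d=2$, $y=(1,1,1,1)$ (the moments of $\delta_1$) and $\nu$ the standard Gaussian; then $\det H_4(y+\eta\, m(\nu))=-\eta^3<0$.

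The fix is to use the convex combination $(1-\eta)y+\eta\, m(\nu)$ instead. For it, $H_{2d}$ equals $(1-\eta)H_{2d}(y)+\eta H_{2d}(m(\nu))\succ 0$, and these points still converge to $y$. Alternatively, rely on your separation argument (nonnegative univariate polynomials are sums of squares), which needs no reduction to the positive definite case at all.
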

The Hankel spectrahedron $\mathcal{H}_{2d}$ is semialgebraic but not compact. Moreover, for $f=h(\mathfrak{p}_1,\ldots,\mathfrak{p}_{2d}) \in \Theta$ with $f^{(n)}$ strictly positive for all $n \in \N$, we cannot assume that $h$ is positive on $\mathcal{H}_{2d}$ in general. This is illustrated in the following example.

\begin{example}[\cite{klep2021}~Remark~3.3] \label{example: normalized positive}
For $\alpha =\frac{1}{\sqrt{2}}$ we consider the discrete probability measure $\mu=(1-\alpha)\delta_0+\alpha \delta_1$ and the normalized symmetric function $f = g(\mathfrak{p}_1,\ldots,\mathfrak{p}_4) = \sum_{i=1}^4 (\mathfrak{p}_i-\alpha)^2 \in \Theta$. For $n \geq 1$ the $n$-th moment of $\mu$ equals $(1-\alpha)\cdot 0^n + \alpha \cdot 1^n = \alpha$. Thus, $(\alpha,\alpha,\alpha,\alpha) \in \mathcal{H}_4$ which shows that $g$ is not strictly positive on $\mathcal{H}_4$. On the contrary, the point $(\alpha,\alpha,\alpha,\alpha)$ is never contained in $\mu_{n,4}(\R^n)$ for $n \in \N$. This is because then the first four non-trivial moments of a probability measure $\mu_n = \sum_{i=1}^n \frac{1}{n} \delta_{x_i}$ on $\R$ are $\alpha$, i.e., $\int t^k \mathrm{d}\mu_n(t) = \alpha$ for $1 \leq k \leq 4$. We observe
\[ \sum_{i=1}^n \frac{(x_i^2-x_i)^2}{n} = \int (t^2-t)^2 \mathrm{d}\mu_n(t) = \int t^4 \mathrm{d}\mu_n(t) -2 \int t^3 \mathrm{d}\mu_n(t) + \int t^2 \mathrm{d} \mu_n(t) = \alpha-2\alpha+\alpha=0. \]
Thus each summand $x_i^2-x_i$ must be zero, which implies $x_i \in \{0,1\}$. So $\mu_n = \frac{n-k}{n} \delta_0+ \frac{k}{n} \delta_1$ for some integer $1 \leq k \leq n$. We find
$\alpha = \int t \mathrm{d}\mu_n(t)= \frac{k}{n} $ which is a contradiction since $\alpha$ is irrational.
\end{example}

For a fixed $d \in \N$ let $m_1,\ldots,m_N \in \R[Z_1,\ldots,Z_{2d}]$ denote all principal minors of the matrix $H_{2d}(Z_1,\ldots,Z_{2d})$ whose entries are polynomials. So, we can write the Hankel spectrahedron $\mathcal{H}_{2d}$ as the basic semialgebraic set 
\begin{equation} \label{eq: H_2d as basic semialgebraic set}
\mathcal{H}_{2d} = \left\{ y \in \R^{2d} ~\middle|~ m_1(y)\geq 0,\ldots,m_N(y) \geq 0\right\}.     
\end{equation}
The principal minors evaluated in power means are symmetric sums of squares which will be used to prove the Positivstellensatz. The following lemma is essentially the normalized symmetric function analog of \cite[Lemma~3.6]{klep2026sums}.

\begin{lemma} \label{lem: principal minors become sos}
    Let $1 \leq k \leq d+1$ be an integer and $I \in \binom{[d+1]}{k}$. Let $m_I \in \R[Z_1,\ldots,Z_{2d}]$ denote the principal minor of the principal submatrix $H_{2d,I}=(Z_{i+j-2})_{i,j \in I}$ of the matrix $H_{2d}(Z_1,\ldots,Z_{2d})$. Then $m_I(\mathfrak{p}_1,\ldots,\mathfrak{p}_{2d})$ is sos, i.e., $m_I\left(\frac{p_1^{(n)}}{n},\ldots,\frac{p_{2d}^{(n)}}{n}\right)$ is a sum of squares for all $n \in \N$. 
\end{lemma}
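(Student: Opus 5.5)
The plan is to express the principal minor as a determinant of a Gram matrix with respect to the uniform measure on the coordinates, and then expand it by Cauchy--Binet. Fix $n$ and write $\nu_x = \frac1n\sum_{l=1}^n \delta_{x_l}$ for $x \in \R^n$. Then
\[
\frac{p_{i+j-2}^{(n)}(x)}{n} = \int t^{i-1}t^{j-1}\,\mathrm{d}\nu_x(t),
\]
where the index-$0$ entry is $1 = \frac{p_0^{(n)}}{n}$. This is consistent with the convention $y_0 = 1$ in $H_{2d}$.

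Write $I = \{i_1 < \dots < i_k\}$. Define the $k\times n$ matrix
\[
V_I(X) = \left( \tfrac{1}{\sqrt n} X_l^{\,i_r - 1}\right)_{1\le r\le k,\ 1\le l\le n}.
\]
Its entries are polynomials, with the constant $\frac{1}{\sqrt n}$ allowed. Then $H_{2d,I}$ evaluated at the power means equals $V_I V_I^{T}$ as an identity in $\R[X_1,\dots,X_n]$. By the Cauchy--Binet formula,
\[
m_I\left(\tfrac{p_1^{(n)}}{n},\ldots,\tfrac{p_{2d}^{(n)}}{n}\right) = \det(V_IV_I^T) = \sum_{L\in\binom{[n]}{k}} \det\big(V_I[\,\cdot\,,L]\big)^2 .
\]
Each $\det(V_I[\cdot,L])$ is a polynomial in $X_1,\dots,X_n$; up to the factor $n^{-k/2}$ it is the generalized Vandermonde determinant $\det(X_{l_s}^{i_r-1})_{r,s}$. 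Hence the minor is a sum of squares of polynomials.

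If $n<k$, the Cauchy--Binet sum is empty and the minor is identically $0$, which is trivially sos. I also need to check that the expression is symmetric, so that it lies in $\SSigma_n$. It is symmetric because it is a polynomial in power means. A permutation of variables only permutes the index sets $L$, and possibly changes the sign of each determinant, so the decomposition is permutation-invariant as a whole.

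The only point needing care is the identification $H_{2d,I}(\text{power means}) = V_IV_I^T$, including the entry with index $0$ when $1\in I$. That entry is the constant $1$, and it indeed equals $\frac1n\sum_l X_l^0$. Since this holds for every $n\in\N$, $m_I(\mathfrak{p}_1,\ldots,\mathfrak{p}_{2d})$ is sos in the sense of the definition.
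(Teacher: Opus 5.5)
Your proof is correct and is essentially the paper's argument. The paper writes the evaluated principal submatrix as the Gram matrix $Z^\top Z$, with $Z = \frac{1}{\sqrt{n}}\bigl(X_l^{i_r-1}\bigr)_{l,r}$ (your $V_I^{T}$), and applies Cauchy--Binet to obtain $\sum_{J \in \binom{[n]}{k}} (\det Z_J)^2$.
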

\begin{proof}
    Suppose that $I= \{i_1 < \dots < i_k\}$ and write $H_I:=H_{2d,I}\left(\frac{p_1^{(n)}}{n},\ldots,\frac{p_{2d}^{(n)}}{n}\right)$. Then $\det H_I=m_I\left(\frac{p_1^{(n)}}{n},\ldots,\frac{p_{2d}^{(n)}}{n}\right)$ and we use the Gram decomposition $H_I=Z^\top Z$, where 
    \[ Z = \frac{1}{\sqrt{n}}\begin{pmatrix}
        X_1^{i_1-1} & X_1^{i_2-1} & \dots & X_1^{i_k-1} \\ 
        X_2^{i_1-1} & X_2^{i_2-1} & \dots & X_2^{i_k-1} \\
        \vdots & \vdots & & \vdots \\
        X_n^{i_1-1} & X_n^{i_2-1} & \dots & X_n^{i_k-1} 
    \end{pmatrix} \in \R[X_1,\ldots,X_n]^{n \times k}.\]
    We can apply the Cauchy-Binet formula to calculate $\det H_I$ and obtain
    \[ m_I\left(\frac{p_1^{(n)}}{n},\ldots,\frac{p_{2d}^{(n)}}{n}\right) = \det H_I = \sum_{J \in \binom{[n]}{k}} (\det Z_J)^2,\]
    where $Z_J \in \R[X_1,\ldots,X_n]^{k \times k}$ denotes the submatrix of $Z$ with rows indexed by $J$.
    So, the polynomial $m_I\left(\frac{p_1^{(n)}}{n},\ldots,\frac{p_{2d}^{(n)}}{n}\right)$ is clearly a symmetric sum of squares.
\end{proof}
We point out that the factorization of the matrix $H_I$ is known in the context of polynomial optimization for $n$-atomic measures (see e.g. \cite[Lemma~5.4.]{laurent2008sums}).
Moreover, it is a direct consequence of Proposition \ref{prop:hankel spectrahedron} and Lemma \ref{lem: principal minors become sos} that the (unconstrained) Krivine-Stengle-Positivstellensatz \cite[2.2.1~Positivstellensatz]{marshall2008positive} applies to normalized symmetric functions. We describe the constrained analog in Section \ref{sec: constrained non-negativity}.

\begin{theorem}\label{thm: positivstellensatz normalized setting}
  Let $f=h(\mathfrak{p}_1,\ldots,\mathfrak{p}_{2d}) \in \Theta$.
  \begin{enumerate}
      \item If some $\varepsilon > 0$ exists with $f^{(n)}\geq \varepsilon$ for all $n \in \N$, then there are sums of squares $p,q \in \Theta$ with $pf=1+q$. 
      \item If $f^{(n)} \geq 0$ for all $n \in \N$, then there are sums of squares $p,q \in \Theta$ and an integer $m \geq 0$ with $pf = f^{2m}+q$.
  \end{enumerate}

\end{theorem}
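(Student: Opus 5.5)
The plan is to reduce both statements to the classical Krivine--Stengle Positivstellensatz on the basic closed semialgebraic set $\mathcal{H}_{2d}$ from \eqref{eq: H_2d as basic semialgebraic set}. We then transport the resulting certificate to $\Theta$ along the ring homomorphism $\R[Z_1,\ldots,Z_{2d}] \to \Theta$, $Z_i \mapsto \mathfrak{p}_i$. For each $n$ this homomorphism, composed with the realization map $\mathfrak{p}_i \mapsto p_i^{(n)}/n$, is again a ring homomorphism into $\R[X_1,\ldots,X_n]$. Hence any polynomial identity in the $Z_i$ becomes, after substitution, an identity of symmetric polynomials in every number of variables $n$, that is, an identity in $\Theta$.

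\emph{Step 1 (positivity on $\mathcal{H}_{2d}$).} In case (1) we have $h \geq \varepsilon$ on $\bigcup_n \mu_{n,2d}(\R^n)$. By continuity of $h$ this gives $h \geq \varepsilon > 0$ on the closure, which is $\mathcal{H}_{2d}$. In case (2) the same argument gives $h \geq 0$ on $\mathcal{H}_{2d}$. This is exactly where the uniform $\varepsilon$ is essential: Example \ref{example: normalized positive} shows that pointwise positivity of every $f^{(n)}$ need not survive passage to the closure. By Proposition \ref{prop:hankel spectrahedron}, $\mathcal{H}_{2d} = \{y : m_1(y) \geq 0, \ldots, m_N(y) \geq 0\}$, where the $m_j$ are the principal minors of $H_{2d}$.

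\emph{Step 2 (Stengle).} Let $T_m$ be the preordering of $\R[Z]$ generated by $m_1,\ldots,m_N$. Its elements are finite sums $\sum_{e \in \{0,1\}^N} \sigma_e\, m_1^{e_1}\cdots m_N^{e_N}$ with $\sigma_e \in \sum \R[Z]^2$. The Krivine--Stengle Positivstellensatz \cite[2.2.1]{marshall2008positive} then gives the following. In case (1), since $h > 0$ on the set, there are $P, Q \in T_m$ with $Ph = 1 + Q$. In case (2), since $h \geq 0$ on the set, there are $P, Q \in T_m$ and $m \geq 0$ with $Ph = h^{2m} + Q$.

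\emph{Step 3 (transport).} Set $p := P(\mathfrak{p}_1,\ldots,\mathfrak{p}_{2d})$ and $q := Q(\mathfrak{p}_1,\ldots,\mathfrak{p}_{2d})$. The identities from Step 2 hold in $\Theta$ by the homomorphism remark above. It remains to show that $p$ and $q$ are sos, i.e.\ that $p^{(n)}$ and $q^{(n)}$ are sums of squares for every $n$. Each $\sigma_e(\mathfrak{p})^{(n)}$ is a sum of squares of polynomials. Each $m_j(\mathfrak{p})^{(n)}$ is a sum of squares by Lemma \ref{lem: principal minors become sos}. Products and sums of sums of squares in $\R[X_1,\ldots,X_n]$ are again sums of squares, so every term $\sigma_e\, m_1^{e_1}\cdots m_N^{e_N}$ becomes sos after substitution, and hence so do $p$ and $q$. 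Symmetry is automatic, since all power means are symmetric. This yields (1) and (2).

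The only genuine obstacle is Step 1, namely justifying that the hypotheses pass to the closure $\mathcal{H}_{2d}$. This is handled by continuity together with Proposition \ref{prop:hankel spectrahedron}, which identifies the closure with a basic semialgebraic set whose defining minors become sos under substitution; everything else is formal.
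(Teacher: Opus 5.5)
Your proposal is correct and follows the paper's proof essentially step for step. You pass the bound to the closure $\mathcal{H}_{2d}$ by continuity, apply the Krivine--Stengle Positivstellensatz to the preordering generated by the principal Hankel minors, and substitute the power means, using Lemma~\ref{lem: principal minors become sos} to see that the resulting $p$ and $q$ are sums of squares in $\Theta$.
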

\begin{proof}
We only prove assertion (1). The proof of (2) proceeds analogously. 
By continuity and Proposition \ref{prop:hankel spectrahedron} we have $f^{(n)} \geq \varepsilon$ for all $n \in \N$ if and only if 
\[h(Z_1,\ldots,Z_{2d}) \geq \varepsilon \text{ on }\mathcal{H}_{2d}= \left\{y \in \R^{2d} ~\middle|~ m_1(y) \geq 0,\ldots, m_N(y) \geq 0\right\}\] by equation (\ref{eq: H_2d as basic semialgebraic set}). Let \[T=\left\{ \sum_{e \in \{0,1\}^N} \sigma_e m_1^{e_1}\dots m_N^{e_N} ~\middle|~ \sigma_e \in \sum \R[Z_1,\ldots,Z_{2d}]^2\right\} \]
denote the preordering defined by $m_1,\ldots,m_N \in \R[Z_1,\ldots,Z_{2d}]$. For $e \in \{0,1\}^N$ we write $m^e:=m_1^{e_1}\cdots m_N^{e_N}$. Because $h > 0$ on $\mathcal{H}_{2d}$, the existence of $p,q \in T$ with $ph=1+q$ follows from the Krivine-Stengle-Positivstellensatz for basic semialgebraic sets in the polynomial ring \cite[2.2.1~Positivstellensatz]{marshall2008positive}. So $p = \sum_{e \in \{0,1\}^N}\sigma_e m^e  $ and $q = \sum_{e \in \{0,1\}^N}\tau_e m^e  $ for $\sigma_e,\tau_e \in \sum \R[Z_1,\ldots,Z_{2d}]^{2}$. Substituting $\frac{p_i^{(n)}}{n}$
for $Z_i$ for all $1 \leq i \leq 2d$ we conclude
\begin{align*}
    &~ p(\mu_{n,2d}) h(\mu_{n,2d}) = 1+ q(\mu_{n,2d}) \\
    \iff & \sum_{e \in \{0,1\}^N}\sigma_e(\mu_{n,2d}) m(\mu_{n,2d})^{e}\cdot f^{(n)} = 1 + \sum_{e \in \{0,1\}^N}\tau_e(\mu_{n,2d}) m(\mu_{n,2d})^{e} .
\end{align*}
This is equivalent to $P f =1+ Q$, where $P=p(\mathfrak{p}_1,\ldots,\mathfrak{p}_{2d}),~ Q=q(\mathfrak{p}_1,\ldots,\mathfrak{p}_{2d}) \in \Theta$. Finally, note that $P,Q$ are sos since each $\sigma_e(\mu_{n,2d})$ and $\tau_e(\mu_{n,2d})$ is a symmetric sum of squares and, by Lemma \ref{lem: principal minors become sos}, each $m_i(\mu_{n,2d})$ is a sum of squares.
\end{proof}
Remarkably, the representations of the form $pf=1+q$ and $pf=f^{2m}+q$ with $p,q \in \Theta$ are of such a form that if $f \in \R[\mathfrak{p}_1,\ldots,\mathfrak{p}_{2d}]$ then also $p,q \in \R[\mathfrak{p}_1,\ldots,\mathfrak{p}_{2d}]$ holds.
Another consequence of the theorem is that every non-negative normalized symmetric function has a sum of squares representation in $\operatorname{Quot}(\Theta)$. This is an analog of Artin’s positive solution of Hilbert's 17th problem \cite{artin1927zerlegung} and was already known in the language of moment inequalities (see \cite[Theorem~3.7]{klep2026sums}) and trace inequalities \cite{klep2021}.

\subsection{Constrained non-negativity}\label{sec: constrained non-negativity}
A natural extension lies in the study of non-negativity or positivity on a constrained set, where the constraints are given by a finite number of elements in $\Theta$.
Suppose we have constraints $g_1=h_1(\mathfrak{p}_1,\ldots,\mathfrak{p}_{2d})\geq 0,\ldots,  g_s=h_s(\mathfrak{p}_1,\ldots,\mathfrak{p}_{2d}) \geq 0$ which define the sets 
\[K_g^{(n)} := \left\{ x \in \R^n ~\middle|~ g_i^{(n)}(x_1,\ldots,x_n) \geq 0,1 \leq i \leq s\right\} = \left\{x \in \R^n ~\middle|~  h_i(\mu_{n,2d}(x)) \geq 0, 1 \leq i \leq s\right\}.\] We say that $f \in \Theta$ is \emph{non-negative subject to $g_1 \geq 0, \dots, g_s \geq 0$} if $f^{(n)}$ is non-negative on $K_g^{(n)}$ for all $n \in \N$.
The difficulty in verifying this lies in the observation that in general non-negativity of $f=h(\mathfrak{p}_1,\ldots,\mathfrak{p}_{2d})$ subject to $g_1 \geq 0, \ldots, g_s \geq 0$ is not equivalent to non-negativity of $h$ on the semialgebraic set  
\[ \mathcal{H}_{2d}(g):= \mathcal{H}_{2d} \cap \left\{ y \in \R^{2d} ~\middle|~ h_i(y) \geq 0, 1 \leq i \leq s \right\}.\]
This is because the inclusion 
\[ 
\operatorname{cl} \bigcup_{n \in \N}    \left\{ y \in \mu_{n,2d}(\R^n) ~\middle|~ h_i(y) \geq 0, 1 \leq i \leq s\right\}  \subset \mathcal{H}_{2d}(g)
\]
can be strict. The inclusion is strict if there exists $y \in \mathcal{H}_{2d} \setminus \bigcup_{n \in \N} \mu_{n,2d}(\R^n)$ with $h_j(y)=0$ for some $1 \leq j \leq s$, but for all sequences $(y_m)_m \in \bigcup_{n \in \N} \mu_{n,2d}(\R^n)$ with $y_m \to y$ we have $h_j(y_m)< 0$ for all sufficiently large $m$. Example \ref{example: normalized positive} illustrates this phenomenon. Suppose that $g_1 = -\sum_{i=1}^4\left(\mathfrak{p}_i-\frac{1}{\sqrt{2}}\right)^2 \geq 0$ is the only constraint. Then $K_g^{(n)} = \emptyset$ for all $n \in \N$. However, the set $\mathcal{H}_{2d}(g)=\left\{\left(\frac{1}{\sqrt{2}},\frac{1}{\sqrt{2}},\frac{1}{\sqrt{2}},\frac{1}{\sqrt{2}}\right)\right\}$ is non-empty.
So, there are in general more normalized symmetric functions $f=h(\mathfrak{p}_1,\dots,\mathfrak{p}_{2d})$ that are non-negative subject to $g_1 \geq 0, \dots, g_s \geq 0$ than normalized symmetric functions for which $h$ is non-negative on $\mathcal{H}_{2d}(g)$. If the set $\left\{y \in \mathcal{H}_{2d} ~\middle|~ h_i(y) > 0, 1 \leq i \leq s\right\}$ is dense in $\mathcal{H}_{2d}(g)$, then the two statements are equivalent.

Nevertheless, the proof of Theorem \ref{thm: positivstellensatz normalized setting} extends immediately to the setting where $f \in \Theta$ is bounded below by some $\varepsilon > 0$ on the set $\mathcal{H}_{2d}(g)$, yielding a constrained Krivine-Stengle-Positivstellensatz analog. Moreover, if $\mathcal{H}_{2d}(g)$ is compact we obtain an analog of Schmüdgen's Positivstellensatz. If additionally the quadratic module $M(m_1,\ldots,m_N,h_1,\ldots,h_s) \subset \R[Z_1,\ldots,Z_{2d}]$ is Archimedean, we obtain an analog of Putinar's Positivstellensatz. The equivalent Positivstellensatz for moment inequalities can be found in \cite[Section~4]{klep2026sums}.

\begin{corollary}
    Let $\varepsilon > 0$ and $f=h(\mathfrak{p}_1,\ldots,\mathfrak{p}_{2d}),g_1=h_1(\mathfrak{p}_1,\ldots,\mathfrak{p}_{2d}), \ldots, g_s=h_s(\mathfrak{p}_1,\ldots,\mathfrak{p}_{2d}) \in \Theta $ and suppose that $h \geq \varepsilon$ holds on the set $\mathcal{H}_{2d}(g)$.
    If $\mathcal{H}_{2d}(g)$ is a compact set, then \[f=\sum_{e \in \{0,1\}^s} \sigma_e g_1^{e_1}\cdots g_s^{e_s} \] for some sos $\sigma_e \in \Theta$. Moreover, if the quadratic module $M(m_1,\ldots,m_N,h_1,\ldots,h_s) \subset \R[Z_1,\ldots,Z_{2d}]$ is Archimedean, then 
    \[f= \sigma_0+ \sum_{i=1}^s \sigma_i g_i \] for some sos $\sigma_i \in \Theta$.
\end{corollary}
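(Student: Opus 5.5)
The plan is to mirror the proof of Theorem \ref{thm: positivstellensatz normalized setting}: work in the polynomial ring $\R[Z_1,\ldots,Z_{2d}]$, apply the classical Schmüdgen and Putinar Positivstellensätze to the basic closed semialgebraic set $\mathcal{H}_{2d}(g)$, and then substitute the power means $\mathfrak{p}_i$ for $Z_i$. By equation \eqref{eq: H_2d as basic semialgebraic set},
\[
\mathcal{H}_{2d}(g)=\{y \in \R^{2d} \mid m_1(y)\geq 0,\ldots,m_N(y)\geq 0,\ h_1(y)\geq 0,\ldots,h_s(y)\geq 0\}
\]
is described by the finitely many polynomials $m_1,\ldots,m_N,h_1,\ldots,h_s$. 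By hypothesis $h\geq\varepsilon>0$ on this set.

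For the first assertion, compactness of $\mathcal{H}_{2d}(g)$ lets us apply Schmüdgen's Positivstellensatz to the preordering generated by $m_1,\ldots,m_N,h_1,\ldots,h_s$. This gives
\[
h=\sum_{e\in\{0,1\}^N,\ e'\in\{0,1\}^s} \sigma_{e,e'}\, m^{e}\, h_1^{e'_1}\cdots h_s^{e'_s}
\]
with $\sigma_{e,e'}\in\sum\R[Z_1,\ldots,Z_{2d}]^2$. We then regroup by the exponent vector $e'$ of the constraints, setting $\widetilde\sigma_{e'}:=\sum_{e}\sigma_{e,e'}m^e$. Substituting $Z_i\mapsto \mathfrak{p}_i$ is a ring homomorphism $\R[Z]\to\Theta$, and evaluating it in $n$ variables gives $Z_i\mapsto p_i^{(n)}/n$. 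Under this substitution the identity becomes $f=\sum_{e'}\widetilde\sigma_{e'}(\mathfrak{p})\,g_1^{e'_1}\cdots g_s^{e'_s}$ in $\Theta$.

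The remaining step is to check that each $\widetilde\sigma_{e'}(\mathfrak{p})$ is sos in $\Theta$, meaning its truncation to $n$ variables is a sum of squares for every $n$. Each $\sigma_{e,e'}(\mathfrak{p})^{(n)}$ is a sum of squares of polynomials in the power means, so it is sos. Each $m_i(\mathfrak{p})^{(n)}$ is sos by Lemma \ref{lem: principal minors become sos}. Products and sums of sos polynomials are sos, so $\widetilde\sigma_{e'}(\mathfrak{p})$ is sos.

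For the second assertion, we apply Putinar's Positivstellensatz to the Archimedean quadratic module $M(m_1,\ldots,m_N,h_1,\ldots,h_s)$, which gives
\[
h=\tau_0+\sum_{j=1}^N\tau_j m_j+\sum_{i=1}^s\sigma_i h_i .
\]
Here we set $\sigma_0:=\tau_0+\sum_j\tau_j m_j$ and substitute as before. The same argument, using Lemma \ref{lem: principal minors become sos} for the terms $m_j(\mathfrak{p})$, shows that $\sigma_0$ and the $\sigma_i$ become sos in $\Theta$.

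The only point needing care is the bookkeeping: the Hankel minors $m_j$ appear as extra generators, and they must be absorbed into the sos coefficients rather than left as factors. This works because they are sos after substitution. No subtlety arises from the gap between $\mathcal{H}_{2d}(g)$ and the closure of the actual images, because the hypothesis is stated directly on $\mathcal{H}_{2d}(g)$.
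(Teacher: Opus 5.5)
Your proposal is correct and takes the route the paper indicates when it omits the proof. You apply Schmüdgen's (resp.\ Putinar's) Positivstellensatz in $\R[Z_1,\ldots,Z_{2d}]$ to the description of $\mathcal{H}_{2d}(g)$ by $m_1,\ldots,m_N,h_1,\ldots,h_s$, substitute the power means, and absorb the Hankel minors into the sos coefficients via Lemma \ref{lem: principal minors become sos}, exactly as in the proof of Theorem \ref{thm: positivstellensatz normalized setting}; your regroupings $\widetilde\sigma_{e'}=\sum_e\sigma_{e,e'}m^e$ and $\sigma_0=\tau_0+\sum_j\tau_j m_j$ are the bookkeeping the paper leaves implicit.
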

We omit the proof, because the statement basically follows from Schmüdgen's and Putinar's Positivstellensätze by applying Lemma \ref{lem: principal minors become sos} in the same way as in the proof of Theorem \ref{thm: positivstellensatz normalized setting}.

\section{Conclusion and open questions}

We proved any-dimensional analogs of Pólya's and Reznick's Positivstellensätze for (even) symmetric functions. In both cases, strict positivity uniformly bounded away from zero yields a dimension-independent certificate: in the Pólya-type statement (Corollary~\ref{cor:infinite Polya}) by non-negative coefficients in the even monomial basis, and in the Reznick-type statement (Theorem~\ref{thm: dimension-independent reznick}) by a uniform power of $p_2$ turning the function into an any-dimensional sum of squares. A central ingredient in the proof of Theorem~\ref{thm: dimension-independent reznick} is the description of the relevant character space in terms of moment sequences of discrete probability measures on $[-1,1]$, or equivalently as a closure of the $S_\infty$-orbit space of the unit sphere in $\ell^2$ (Corollary~\ref{cor:K_T-description}).

Moreover, we also studied normalized symmetric functions. In Theorem \ref{thm: positivstellensatz normalized setting}, we reproved a Krivine-Stengle analog for normalized symmetric functions.

Several questions remain open. First, the framework of \cite{levin2025any} gives a general language for any-dimensional optimization problems via representation stability. Our arguments do not use this perspective. It would be interesting to know whether representation stability can give alternative proofs of our Positivstellensätze, yield effective bounds, or strengthen the certificates obtained here.

There are some natural extensions of Theorem~\ref{thm: dimension-independent reznick} that one might try to show. The proof excludes $p_1$, essentially because $p_1$ is unbounded on the set $\{p_2=1\}$ and the corresponding preordering is no longer Archimedean. Using a cylinder Positivstellensatz from \cite{schmudgen2024positivstellensatze} we could prove a generalization for symmetric functions $f \in \Lambda$ that include powers of $p_1$ under an additional mild assumption on the positivity of $f$ (Theorem~\ref{thm: dimension-independent reznick with p_1}). Does an analog of Theorem~\ref{thm: dimension-independent reznick} hold nonetheless without this additional assumption? Can the conclusion be strengthened so that $p_2^k f$ is not only an any-dimensional sum of squares, but a sum of even powers of linear forms, in analogy with Reznick's stronger finite-dimensional statement? Is it possible to construct a bound on $k$ in Theorem~\ref{thm: dimension-independent reznick}? 

Another group of questions concerns the role of strict positivity. In both Corollary~\ref{cor:infinite Polya} and Theorem~\ref{thm: dimension-independent reznick} we assume that $f$ is bounded below by some $\varepsilon>0$ on the $\ell^2$-unit sphere. Example~\ref{ex: bad point reznick} shows that mere non-negativity is not sufficient. It remains unclear whether one can replace the condition $f(x)\geq \varepsilon$ by the weaker condition $f(x)>0$ for all $x\in \ell^2$. Relatedly, one may ask whether deciding any-dimensional positivity or non-negativity of a symmetric function is decidable. The moment-theoretic and real-algebraic techniques used here suggest possible analogs of Hilbert's 17th problem and of Krivine--Stengle type Positivstellensätze for symmetric functions. To show such a result, one has to study the real spectrum instead of the character space.

\subsubsection*{Acknowledgements}
We thank Luca Wellmeier and Markus Schweighofer for fruitful discussions about the problems. We also thank Cordian Riener for pointing out the equivalence of polynomial inequalities in normalized symmetric functions, univariate pure trace polynomials and pure moment polynomials. Furthermore, we thank Claus Scheiderer for helpful insights into the real spectrum.

\bibliographystyle{abbrv}
\bibliography{references}

\end{document}